\newtheorem{theorem}{Theorem}[section]
\newtheorem{definition}[theorem]{Definition}
\numberwithin{equation}{section}
\newtheorem{lemma}[theorem]{Lemma}
\newtheorem{proposition}[theorem]{Proposition}
\newtheorem{remark}[theorem]{Remark}
\newtheorem{claim}[theorem]{Claim}
\newtheorem{conjecture}[theorem]{Conjecture}
\numberwithin{equation}{section}
\def\Z{\mathbb{Z}}
\renewcommand{\phi}{\varphi}
\renewcommand{\epsilon}{\varepsilon}
\newcommand{\1}{{\text{\Large $\mathfrak 1$}}}
\renewcommand{\emptyset}{\varnothing}
\newcommand{\til}{\widetilde}
\newcommand{\pr}[1]{\mathbb{P}\!\left(#1\right)}
\newcommand{\E}[1]{\mathbb{E}\!\left[#1\right]}
\newcommand{\prstart}[2]{\mathbb{P}_{#2}\!\left(#1\right)}
\newcommand{\prcond}[3]{\mathbb{P}_{#3}\!\left(#1\;\middle\vert\;#2\right)}
\newcommand{\econd}[2]{\mathbb{E}\!\left[#1\;\middle\vert\;#2\right]}
\def\cU{\mathcal{U}}
\def\cP{\mathcal{P}}
\def\cG{\mathcal{G}}
\def\cF{\mathcal{F}}
\def\cE{\mathcal{E}}
\def\cC{\mathcal{C}}
\def\loc{\textrm{loc}}
\def\reg{\textrm{reg}}
\def\irr{\textrm{irr}}
\def\bad{\textrm{bad}}
\def\line{\textrm{line-good}}
\newcommand{\tn}{|\kern-.1em|\kern-0.1em|}
\newcommand{\cpc}[2]{\mathrm{Cap}_{#1}(#2)}
\newcommand{\pcap}[1]{\mathrm{pCap}(#1)}
\newcommand{\red}[1]{{\color{red}{#1}}}
\newcommand\be{\begin{equation}}
	\newcommand\ee{\end{equation}}
\newcommand{\diam}[1]{{\rm{diam}}(#1)}
\newcommand{\cT}{\mathcal{T}}
\newcommand{\ce}[1]{\mathcal{C}_e(0;{#1})}
\newcommand{\ball}[2]{B(#1,#2)}
\newcommand{\Reg}[1]{\text{Reg}(#1)}
\newcommand{\cez}[1]{\mathcal{C}_e(z;{#1})}
\begin{document}
	
	\title{\bf Capacity in high dimensional percolation}
	
	\author{Amine Asselah \thanks{
			Universit\'e Paris-Est, LAMA, UMR 8050, UPEC, UPEMLV, CNRS, F-94010 Cr\'eteil; amine.asselah@u-pec.fr} \and
		Bruno Schapira\thanks{Universit\'e Claude Bernard Lyon 1, Institut Camille Jordan, CNRS UMR 5208, 43 Boulevard du 11 novembre 1918, 69622 Villeurbanne Cedex, France;  schapira@math.univ-lyon1.fr} \and Perla Sousi\thanks{University of Cambridge, Cambridge, UK;   p.sousi@statslab.cam.ac.uk} 
	}
	\date{}
	\maketitle
	
	\begin{abstract}  
		We introduce a notion of capacity for high dimensional critical percolation by showing that for any finite set $A$, the suitably rescaled probability that the cluster of $z$ intersects $A$ converges as $\|z\|\to\infty$. This can be viewed as a generalisation of the asymptotic of the two point function and we call the limit the p-capacity of $A$. We next show that the probability that the Incipient Infinite Cluster of $z$ intersects the set $A$ appropriately normalised is also of order the p-capacity of $A$ as $\|z\|\to\infty$. We conjecture that the p-capacity is of the same order as the $(d-4)$-Bessel-Riesz capacity and in support of this we estimate the p-capacity of balls. As a byproduct of our techniques we give a simpler proof of the one-arm exponent of Kozma and Nachmias for dimensions 8 and higher and as long as the two point function asymptotic holds. Our proofs make use of a new large deviations bound on the pioneers, that is the number of points on the boundary of a box which are part of the cluster of the origin restricted to this box.  				
		\bigskip
		
\noindent \emph{Keywords and phrases.} Percolation, capacity, incipient infinite cluster.

		\noindent MSC 2010 \emph{subject classifications.} Primary 60K35, Secondary 31B15.
	\end{abstract}

	\section{Introduction}
	Our aim in this paper is to introduce a notion of capacity in critical percolation on the Euclidean lattice in high dimension (the so-called mean-field regime), and investigate some of its properties. More precisely we shall consider critical Bernoulli percolation on $\mathbb Z^d$, either in dimension $d>10$, for the usual nearest-neighbor graph, or in any dimension $d>6$, but on a sufficiently spread-out graph, meaning that for a sufficiently large constant, all vertices of $\mathbb Z^d$ at distance smaller than this constant are initially connected by an edge. Recall that in percolation with parameter $p\in [0,1]$, edges are removed from the graph with probability $1-p$,  independently for each edge, and are declared open otherwise. It is known that in any dimension $d\ge 2$, there exists a critical parameter $p_c\in (0,1)$, such that for $p>p_c$, the origin is part of an infinite connected component of open edges with positive probability, and for $p<p_c$, all connected components are almost surely finite, see~\cite{Gri} for background on percolation. In this paper we only consider critical percolation, meaning that we consider the graph formed by open edges at $p=p_c$, and denote by $\mathbb P$ its law. Given $z\in \mathbb Z^d$, we let $\mathcal C(z)$ be the connected component of $z$, and given $A\subset \mathbb Z^d$, we write $z\longleftrightarrow A$, for the event that $z$ is connected to $A$, which we simply write $z\longleftrightarrow x$ when $A$ is reduced to a single point $\{x\}$. Likewise, $A\longleftrightarrow B$ denotes the event that two sets $A$ and $B$ are connected. Let also $\tau$ be the two point function: 
	$$\tau(z) = \mathbb P(0\longleftrightarrow z) = \mathbb P(z \in \mathcal C(0)). $$ 
		It is known by~\cite{FvdH17} for the nearest neighbor model in any dimension $d>10$ (see also~\cite{Hara08} for an earlier result valid in dimension $d\ge 19$) and by~\cite{HvdHS} for the sufficiently spread out model in any dimension $d>6$, that the two point function satisfies the following asymptotic: 
	\begin{equation}\label{twopoint}
		\tau(z) \sim a_d \cdot \|z\|^{2-d},
	\end{equation}
	for some constant $a_d>0$, as $\|z\| \to \infty$, where $\|\cdot \|$ denotes the Euclidean norm. For the rest of the paper we consider critical percolation satisfying~\eqref{twopoint} and $d> 6$. 
	
A question from Jason Miller served as the impetus for our work. Motivated by the analogy between branching random walks and critical percolation clusters in high dimensions, he asked whether one could define a notion of capacity for critical percolation similarly to the notion of branching capacity introduced by Zhu~\cite{Zhu}  in the context of branching random walks. 

The starting point of our study answers this question by introducing a notion of capacity in the setting of percolation and can be viewed as a generalisation of the two point function asymptotic~\eqref{twopoint}. 

	\begin{theorem}\label{thm.defpcap}
	Let $A$ be a finite subset of $\mathbb Z^d$. The following limit exists, and is called $p$-capacity of $A$:
	$$\textrm{pCap}(A) = \lim_{\|z\|\to \infty} \frac{\mathbb P(z\longleftrightarrow A)}{\tau(z)}.$$ 
\end{theorem}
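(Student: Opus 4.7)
The plan is to prove the limit exists via a mesoscopic pioneer decomposition of $A$. Fix a large scale $R \gg \mathrm{diam}(A)$ and let $T_R$ be a box of side $R$ containing $A$. Define the \emph{pioneer set of $A$ at scale $R$} as
\[
\mathcal{P}_R(A) := \{y \in \partial T_R : y \overset{T_R}{\longleftrightarrow} A\},
\]
the set of boundary vertices connected to $A$ using only edges inside $T_R$. It is measurable with respect to the configuration on edges in $T_R$, and hence independent of the configuration on edges in $T_R^c$. For $z \in T_R^c$ the event $\{z \longleftrightarrow A\}$ equals, modulo a high-dimensional zigzag correction controlled by a standard intersection estimate, the event $\{z \overset{T_R^c}{\longleftrightarrow} \mathcal{P}_R(A)\}$.

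Conditioning on $\mathcal{P}_R(A)$ and using the independence with the outside configuration,
\[
\mathbb{P}(z \longleftrightarrow A) \approx \mathbb{E}\!\left[\mathbb{P}\!\left(z \overset{T_R^c}{\longleftrightarrow} \mathcal{P}_R(A) \,\middle\vert\, \mathcal{P}_R(A)\right)\right].
\]
A union bound in the outside configuration gives
\[
\mathbb{P}(z \longleftrightarrow A) \leq \sum_{y \in \partial T_R}\mathbb{P}\big(y \overset{T_R}{\longleftrightarrow} A\big)\,\mathbb{P}\big(z \overset{T_R^c}{\longleftrightarrow} y\big),
\]
and, by~\eqref{twopoint} and $\|y\| \leq CR \ll \|z\|$, one gets $\mathbb{P}(z \overset{T_R^c}{\longleftrightarrow} y) = \tau(z)(1 + o_{\|z\|}(1))$ uniformly in $y \in \partial T_R$ once the restriction to $T_R^c$ is shown to cost only a $1+o(1)$ factor (a standard high-dimensional intersection estimate). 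However, the naive sum $g_R(A) := \mathbb{E}[|\mathcal{P}_R(A)|] = \sum_{y \in \partial T_R}\mathbb{P}(y \overset{T_R}{\longleftrightarrow} A)$ diverges in $R$ (of order $R$, since each term is $O(R^{2-d})$ and $|\partial T_R| = O(R^{d-1})$). The over-counting comes from pioneers typically sharing the same $T_R$-cluster of $A$.

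The refinement needed to extract a finite, convergent limit is precisely the paper's new large-deviation bound on $|\mathcal{P}_R(A)|$: once the size of the pioneer set is controlled on the event $\{\mathcal{P}_R(A) \neq \emptyset\}$, a Bonferroni-type inclusion-exclusion collapses the sum to a well-defined finite quantity $G_R(A)$ and yields
\[
\lim_{\|z\| \to \infty} \frac{\mathbb{P}(z \longleftrightarrow A)}{\tau(z)} = G_R(A).
\]
A final two-scale comparison (applying the decomposition at two radii $R < R'$ and re-expanding pioneers at scale $R'$ in nested pioneers at scale $R$) shows $G_R(A)$ is Cauchy as $R \to \infty$, and the common limit is $\pcap{A}$. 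The main obstacle is precisely controlling the multi-pioneer correction: without the new pioneer large-deviation estimate, the naive union bound diverges in $R$ and no finite limit can be extracted.
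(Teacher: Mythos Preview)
Your proposal has a genuine gap at its core. After conditioning on the pioneer set (more precisely, on the cluster $\mathcal C(A;T_R)=H$), the event $\{z\longleftrightarrow A\}$ becomes $\{z\stackrel{\text{off }H}{\longleftrightarrow} H\cap\partial T_R\}$, and you need
\[
\lim_{\|z\|\to\infty}\frac{\mathbb P\big(z\stackrel{\text{off }H}{\longleftrightarrow} H\cap\partial T_R\big)}{\tau(z)}
\]
to exist for each fixed $H$. But this is a limit of exactly the same type as the one you are trying to prove, so the argument is circular. Your ``Bonferroni-type inclusion-exclusion'' does not break this circularity: the $k$-th order terms involve $\mathbb P(z\longleftrightarrow y_1,\dots,z\longleftrightarrow y_k)$, and showing these limits exist individually is again the same problem (indeed harder, as it requires multi-arm IIC statements). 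The pioneer large-deviation bound (Theorem~\ref{thm.LD}) controls the \emph{size} of $\mathcal P_R(A)$ but says nothing about whether such limits exist. Separately, your claim that $\mathbb P(z\stackrel{T_R^c}{\longleftrightarrow} y)=\tau(z)(1+o(1))$ for $y\in\partial T_R$ is false: with $y$ sitting on the boundary of the forbidden region, the half-space restriction costs a genuine $O(1)$ factor (cf.\ the half-space two-point bounds of \cite{CH20,CHS23,Romain} used in Section~\ref{sec.LD}), so this is not ``a standard intersection estimate''.

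The paper avoids circularity by decomposing not over pioneers but over the \emph{last pivotal edge} $\vec e=(x,y)$ for $\{z\longleftrightarrow A\}$. This reduces the problem to a connection from a \emph{single point} $y$ to $z$, intersected with a local event near $A$; the limit $\lim_{\|z\|}\mathbb P(\cdot)/\tau(z)$ for such quantities is precisely the existence of the IIC measure (\cite{vdHJ,CCHS25}), which is taken as input. Three short lemmas then finish the job: the no-pivotal event is negligible (BK gives $O(\tau(z)^2)$), far pivotals contribute $O(K^{4-d})$, and for each fixed $e$ the IIC gives convergence. Your pioneer decomposition never isolates a single-point connection and therefore cannot appeal to the IIC; that is the missing idea.
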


Note the analogy with the definition of the Newtonian capacity, in the setting of random walks. Indeed, if $(X_n)_{n\ge 0}$ denotes a simple random walk on $\Z^d$, with $d\geq 3$, then the Newtonian capacity of a finite set $A\subset \mathbb Z^d$, is defined to be 
\[
\cpc{}{A} = \lim_{\|z\|\to\infty} \frac{\prstart{X[0,\infty)\cap A\neq \emptyset}{z}}{g(z)},
\]
where $g(z)= \sum_{n\ge 0} \mathbb P(X_n =z)$ is the Green's function and $X[0,\infty) = \{X_0,X_1,\dots\}$ is the range of the walk.  Zhu~\cite{Zhu} proved that the limit above exists when $X$ is replaced by a critical branching random walk in $\Z^d$, with $d\ge 5$ (at least when the underlying offspring distribution has finite second moment) and he called the limit the branching capacity of the set $A$. 

In Theorem~\ref{thm.twosets} below we extend the theorem above by replacing $z$ by $z+B$ for $B$ another finite set. Furthermore, in Remarks~\ref{remark.eqmeasure} and~\ref{rem.eqmes.ordering} we give some representations of the capacity in terms of equilibrium measures. 
	
By the definition of the p-capacity and~\eqref{twopoint} it immediately follows that it is translation invariant, i.e.\ for any finite $A$ and any $x\in \mathbb Z^d$, one has 
	$$\textrm{pCap}(x+A) = \textrm{pCap}(A).$$ 
	Note that it also follows from the definition that the $p$-Capacity of a point is one, and furthermore that the $p$-Capacity is increasing under set inclusion. By the inclusion-exclusion formula, it is immediate to see that it also satisfies 
	$$\textrm{pCap}(A\cup B) \le \textrm{pCap}(A) + \textrm{pCap}( B)  - \textrm{pCap}(A\cap B). $$ 
The main object of our study is to investigate how this notion of capacity naturally arises in various problems in critical percolation, at least in high dimension (the case of low dimension may also be particularly interesting, but it will not be addressed in this paper). 

Before stating our second main result, we recall the definition of the Incipient Infinite Cluster (abbreviated as IIC) rooted at a vertex $x$  and denoted by $\mathcal C_\infty(x)$. This is defined as the limit in law as $\|w\|\to\infty$ of the cluster of $x$, $\cC(x)$,  conditioned on the event that $\{x\longleftrightarrow w\}$. We refer the reader to~\cite{FvdH17,vdHJ,CCHS25} for a proof of this fact under our hypotheses. 

In our next result we show that the p-capacity of a set also controls the probability that the IIC cluster of a point far from the origin hits it. 
	
	\begin{theorem}\label{thm.IIC}
		There exists a positive constant $C>0$, such that for any finite $A\subset \mathbb Z^d$, 
		$$\frac 1C \cdot \textrm{pCap}(A) \le \liminf_{\|z\|\to \infty} \|z\|^{d-4}\cdot \mathbb P(\mathcal C_\infty(z)\cap A\neq \emptyset) \le 
		\limsup_{\|z\|\to \infty} \|z\|^{d-4} \cdot \mathbb P(\mathcal C_\infty(z)\cap A\neq \emptyset) \le C \cdot \textrm{pCap}(A) . $$ 
	\end{theorem}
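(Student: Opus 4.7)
The plan is to reduce the theorem to an estimate on the joint two-point connection probability by invoking the defining limit of the IIC,
\begin{equation*}
\mathbb{P}(\mathcal{C}_\infty(z) \cap A \neq \emptyset) = \lim_{\|w\|\to\infty} \frac{\mathbb{P}(z \longleftrightarrow A,\, z \longleftrightarrow w)}{\tau(w - z)}.
\end{equation*}
The task thus becomes sandwiching $\mathbb{P}(z \longleftrightarrow A,\, z \longleftrightarrow w)/\tau(w-z)$ between positive constant multiples of $\textrm{pCap}(A) \cdot \|z\|^{4-d}$ in the regime $\|w\| \to \infty$ with $\|z\|$ large.

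For the upper bound, I would apply the BK inequality to the event $\{z \longleftrightarrow A\} \cap \{z \longleftrightarrow w\}$: the cluster of $z$ must contain a ``branching'' vertex $V$ with pairwise disjoint open connections to $z$, $A$, and $w$, giving
\begin{equation*}
\mathbb{P}(z \longleftrightarrow A,\, z \longleftrightarrow w) \leq \sum_V \tau(V-z) \cdot \mathbb{P}(V \longleftrightarrow A) \cdot \tau(w-V).
\end{equation*}
Combined with a uniform version of Theorem~\ref{thm.defpcap} ($\mathbb{P}(V \longleftrightarrow A) \leq C \cdot \textrm{pCap}(A) \cdot \tau(V - v_A)$ for a fixed $v_A \in A$), this reduces matters to the triangle sum $\sum_V \tau(V-z)\tau(V)\tau(w-V)$. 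Using $\tau(x) \sim a_d \|x\|^{2-d}$ and the fact that the bubble $B(z) = \sum_V \tau(V)\tau(V-z)$ is of order $\|z\|^{4-d}$ (well defined because $d>4$, with the relevant absolute convergence granted by $d>6$), one shows that the triangle sum is asymptotically $\tau(w-z)\cdot B(z)$ as $\|w\|\to\infty$, yielding the upper bound upon division by $\tau(w-z)$. The vertices $V$ close to $A$ or close to $w$ each contribute lower-order terms and have to be handled separately, but routinely.

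For the lower bound, I would run a second moment (Paley--Zygmund) argument. Fix an annulus $D_z = \{V : \|z\|/4 \leq \|V\| \leq \|z\|/2\}$ and let $N$ count the vertices $V \in D_z$ simultaneously satisfying $z \longleftrightarrow V$, $V \longleftrightarrow A$ and $V \longleftrightarrow w$. Since $\{N \geq 1\} \subseteq \{z \longleftrightarrow A,\, z \longleftrightarrow w\}$, Paley--Zygmund gives $\mathbb{P}(N \geq 1) \geq (\mathbb{E} N)^2/\mathbb{E} N^2$. A termwise application of FKG, together with Theorem~\ref{thm.defpcap} (each $V \in D_z$ lies at distance of order $\|z\|$ from both $0$ and $z$), yields $\mathbb{E} N \gtrsim \textrm{pCap}(A) \cdot \|z\|^{4-d} \cdot \tau(w-z)$. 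For $\mathbb{E} N^2$, BK applied to the six connection constraints associated with a pair $(V_1,V_2)$ produces a diagrammatic sum whose diagonal part $(V_1 = V_2)$ equals $\mathbb{E} N$, while the off-diagonal part carries an extra factor vanishing as $\|w\|\to\infty$ (the two connections $V_1 \longleftrightarrow w$ and $V_2 \longleftrightarrow w$ must be realized through paths that can share only a bounded common portion in the bulk). Thus $\mathbb{E} N^2 / \mathbb{E} N \to 1$, and the matching lower bound follows.

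The main difficulty I anticipate is this second moment estimate, specifically the diagrammatic bookkeeping showing that the off-diagonal contribution to $\mathbb{E} N^2$ is dominated by the first moment; this is where the triangle condition coming from $d>6$ enters crucially. A secondary point is that the upper bound requires a uniform (rather than merely asymptotic) form of Theorem~\ref{thm.defpcap}, which should be available from the proof of that theorem but must be stated and verified as a separate consequence.
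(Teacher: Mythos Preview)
Your upper bound is essentially the paper's argument: BK produces the triangle sum $\sum_u \tau(w-u)\tau(z-u)\mathbb P(u\longleftrightarrow A)$, and after dividing by $\tau(w)$ and sending $\|w\|\to\infty$ one is left with $\sum_u \tau(z-u)\mathbb P(u\longleftrightarrow A)$, which Theorem~\ref{thm.defpcap} and the bubble estimate control by $\textrm{pCap}(A)\cdot\|z\|^{4-d}$.

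The lower bound, however, has a genuine gap. Your random variable $N$ is not what you think it is: the three events $\{z\leftrightarrow V\}$, $\{V\leftrightarrow A\}$, $\{V\leftrightarrow w\}$ are connection events in the \emph{same} configuration, so on $\{z\leftrightarrow A,\,z\leftrightarrow w\}$ every $V\in\mathcal C(z)\cap D_z$ satisfies all three. Thus $N=|\mathcal C(z)\cap D_z|\cdot\mathbf 1(z\leftrightarrow A,\,z\leftrightarrow w)$, which under the conditioning is typically of order $\|z\|^4$, not of order $1$. Your claim $\mathbb E[N^2]/\mathbb E[N]\to 1$ is therefore false: the off-diagonal terms do \emph{not} acquire an extra factor vanishing in $\|w\|$, because $V_1\leftrightarrow w$ and $V_2\leftrightarrow w$ are realised through the \emph{same} arm of the single cluster, not through two disjoint connections. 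Concretely, the FKG first-moment lower bound undershoots the true $\mathbb E[N]$ by a factor $\|z\|^4$, and $(\mathbb E N)^2/\mathbb E[N^2]$ comes out too small by that factor to recover $\textrm{pCap}(A)\,\|z\|^{4-d}\,\tau(w)$.

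The paper's lower bound takes a completely different route. One reveals the restricted cluster $\mathcal C(0;B(0,r))$ with $r=\|z\|/2$ and identifies on its boundary a set of $\asymp r^2$ \emph{regular pioneers} (points around which the cluster has controlled density at all scales --- this is where Theorem~\ref{thm.LD} and Proposition~\ref{thm:linegoodpoints} enter). FKG is then used to decorrelate $\{z-A\longleftrightarrow\text{pioneers}\}$ from $\{w\longleftrightarrow\text{regular pioneers}\}$, and the heart of the matter is to show that $w$ connects to the regular pioneers \emph{off} the revealed cluster with probability $\gtrsim r^2\tau(w)$. This is done by writing $\mathbb P(\text{off})=\mathbb P(\text{hit})-\mathbb P(\text{via})$: the ``hit'' term is handled by Lemma~\ref{lem:hitdiameterdistance} together with Claim~\ref{cl:lowerboundoncapacity} (regular pioneers form a sparse set whose $(d-4)$-capacity is comparable to its cardinality), while the ``via'' term is shown to be negligible precisely because of the density control built into regularity. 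No second-moment argument on a counting variable of your type can bypass this machinery.
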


	\begin{remark}\rm{Letting 
			$$\tau_\infty(z) = \mathbb P(z\in \mathcal C_\infty(0) ) ,$$ 
the theorem above shows that there exist two positive constants $c$ and $C$, such that for any $z\in \mathbb Z^d\setminus \{0\}$, 
			$$c\cdot \|z\|^{4-d} \le \tau_\infty(z) \le C \cdot \|z\|^{4-d}. $$
			This result had been proved in~\cite[Theorem~1.3]{vdHJ}  using lace expansion. Our proof of Theorem~\ref{thm.IIC} provides a new way of deriving this estimate, which does not rely on this technique (apart from using~\eqref{twopoint}, which at the moment is only proved using lace expansion, see however~\cite{DCP} for recent progress on the derivation of this estimate using a different argument). }
	\end{remark} 
	
	Motivated by the theorem above it is natural to conjecture the following stronger result.
	 
	\begin{conjecture}\label{conj.1} 
		For any finite $A\subset \mathbb Z^d$, 
		$$\lim_{\|z\|\to \infty} \frac{\mathbb P(\mathcal C_\infty(z)\cap A\neq \emptyset)}{\tau_\infty(z)} =  \textrm{pCap}(A). $$ 
	\end{conjecture}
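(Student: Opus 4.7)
The plan is to combine the defining property of the IIC with a sharp three-point asymptotic and then invoke Theorem~\ref{thm.defpcap}. By the definition of $\mathcal C_\infty(z)$,
$$\mathbb P(\mathcal C_\infty(z) \cap A \neq \emptyset) = \lim_{\|w\|\to\infty} \frac{\mathbb P(z \longleftrightarrow A,\, z \longleftrightarrow w)}{\tau(w-z)},$$
so the problem reduces to analysing this joint probability as $\|w\|$ and then $\|z\|$ tend to infinity.

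The crucial step is an asymptotic of the form
$$\mathbb P(z \longleftrightarrow A,\, z \longleftrightarrow w) = (1+o(1))\, \kappa_d \sum_{y \in \mathbb Z^d} \tau(y-z)\, \mathbb P(y \longleftrightarrow A)\, \tau(y-w),$$
as $\min(\|z\|, \|w\|) \to \infty$, for an explicit dimensional constant $\kappa_d$. Heuristically, the cluster of $z$ is approximately tree-like in the mean-field regime, and on the joint event there is a unique ``branching vertex'' $y$ from which the open paths to $A$ and to $w$ diverge. The upper bound follows from a standard tree-graph (BK-type) inequality. For the matching lower bound one would adapt the pioneer large-deviation estimate developed in the paper, together with a second-moment argument showing that the branching vertex is essentially unique, in the spirit of the Aizenman--Newman three-point function asymptotics. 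When $A$ is a singleton this is precisely the classical three-point function asymptotic already available via lace expansion in the present regime; the extension to general finite $A$ can be carried out by a uniform-remainder version of the argument behind Theorem~\ref{thm.twosets}.

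Granting the display above, one first sends $\|w\|\to\infty$. By~\eqref{twopoint}, $\tau(y-w)/\tau(w-z) \to 1$ uniformly for $y$ in any bounded ball, and the contribution of large $\|y\|$ is controlled by convergence of the triangle diagram in $d>6$; passing the limit inside the sum yields
$$\lim_{\|w\|\to\infty} \frac{\mathbb P(z \longleftrightarrow A,\, z \longleftrightarrow w)}{\tau(w-z)} = \kappa_d \sum_{y} \tau(y-z)\, \mathbb P(y \longleftrightarrow A).$$
Now apply Theorem~\ref{thm.defpcap} in the form $\mathbb P(y \longleftrightarrow A) = (1+o(1))\,\mathrm{pCap}(A)\,\tau(y)$ as $\|y\|\to\infty$: the contribution from bounded $y$ is $O(\tau(z)) = o(\tau_\infty(z))$ and hence negligible, while the convolution asymptotic $\sum_y \tau(y-z)\tau(y) \sim c_d\, \tau_\infty(z)$ (which follows from~\eqref{twopoint} and the sharp bounds on $\tau_\infty$ from the remark after Theorem~\ref{thm.IIC}) delivers
$$\lim_{\|z\|\to\infty} \frac{\mathbb P(\mathcal C_\infty(z) \cap A \neq \emptyset)}{\tau_\infty(z)} = \kappa_d c_d \cdot \mathrm{pCap}(A).$$
The constant $\kappa_d c_d$ is forced to equal $1$ by specialising $A=\{0\}$: translation and reflection symmetry yield $\mathbb P(\mathcal C_\infty(z) \cap \{0\} \neq \emptyset) = \tau_\infty(z)$, while $\mathrm{pCap}(\{0\})=1$.

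The main obstacle is the sharp three-point asymptotic with the correct leading constant; the tree-graph upper bound together with the weaker up-to-constant lower bound (sufficient for Theorem~\ref{thm.IIC}) does not identify the prefactor. Secondary obstacles are the uniformity required to interchange $\lim_{\|w\|\to\infty}$ with summation over $y$, which should follow from quantitative tail controls on the remainder in~\eqref{twopoint}, and the extension from singletons to finite $A$ with uniform errors in $y$, for which Theorem~\ref{thm.twosets} provides the template.
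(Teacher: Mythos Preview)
The statement you are addressing is Conjecture~\ref{conj.1}, which the paper explicitly leaves open (``Unfortunately, we were not able to prove such a limiting result''). There is therefore no proof in the paper to compare against, and the relevant question is whether your proposal actually closes the gap. It does not, and to your credit you essentially say so yourself.

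The entire argument hinges on the display
\[
\mathbb P(z \longleftrightarrow A,\, z \longleftrightarrow w) = (1+o(1))\, \kappa_d \sum_{y \in \mathbb Z^d} \tau(y-z)\, \mathbb P(y \longleftrightarrow A)\, \tau(y-w),
\]
which you do not prove. The BK/tree-graph inequality gives the upper bound with $\kappa_d=1$, but there is no mechanism in the paper---and none that you supply---for a matching lower bound with \emph{any} explicit constant. Your suggestion to ``adapt the pioneer large-deviation estimate \ldots\ together with a second-moment argument'' is precisely the step that is missing; the methods of Section~\ref{sec:hitting} deliver only up-to-constant bounds (this is exactly why Theorem~\ref{thm.IIC} is stated with unspecified $C$), and nothing in Theorem~\ref{thm.twosets} or its proof upgrades this to a sharp asymptotic. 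A genuine proof of your three-point display would already resolve the conjecture and would be a substantial advance beyond what is available.

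There is a second, independent gap. You invoke a sharp convolution asymptotic $\sum_y \tau(y-z)\tau(y) \sim c_d\, \tau_\infty(z)$, but the paper only establishes $\tau_\infty(z) \asymp \|z\|^{4-d}$ (the remark following Theorem~\ref{thm.IIC}); no leading-order constant for $\tau_\infty$ is known here, so you cannot match the convolution to it sharply. Your clever normalisation via $A=\{0\}$ would fix the product $\kappa_d c_d$ \emph{a posteriori}, but that trick presupposes the existence of the limit, which is exactly the conjecture. In short, the outline is a reasonable heuristic for why the conjecture should be true, but both of its quantitative inputs lie outside what the paper (or, to my knowledge, the current literature in this generality) provides.
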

	
	In the case of critical branching random walks, an analogous result holds. More precisely, Zhu~\cite{Zhu} proved that the branching capacity appears both as 
	the appropriately renormalised probability of hitting a set by a walk indexed by a critical tree and a tree conditioned to be infinite (which is the analogue of the IIC in this setup).  
	
	Using Theorem~\ref{thm.defpcap}, and the definition of the IIC measure, it is immediate to see that for any $z\in \mathbb Z^d$, 
	$$\mathbb P(\mathcal C_\infty(z) \cap A \neq \emptyset) = \lim_{\|w\|\to \infty} \frac{\mathbb P(z\longleftrightarrow A, w\longleftrightarrow z)}{\tau(w)} = \textrm{pCap}(A) + \textrm{pCap}(\{z\}) - \textrm{pCap}(A\cup \{z\}).$$
	Hence Conjecture~\ref{conj.1} is equivalent to the fact that 
	$$\lim_{\|z\|\to \infty}\frac{ \textrm{pCap}(A) +  \textrm{pCap}(\{z\}) -  \textrm{pCap}(A \cup\{z\})}{\tau_\infty(z)} = \textrm{pCap}(A).$$ 
	Unfortunately, we were not able to prove such a limiting result. However, in the setting of branching capacity and also general Bessel-Riesz capacities, in~\cite{ASS25} we were able to establish it and also extend it to a general finite set $B$ in place of $\{z\}$.  Naturally, this also leads us to the following stronger form of Conjecture~\ref{conj.1}:
	\begin{conjecture}\label{conj.2} For any finite $A,B\subset \mathbb Z^d$, 
		$$\lim_{\|z\|\to \infty}\frac{ \textrm{pCap}(A) +  \textrm{pCap}(B) -  \textrm{pCap}(A \cup(z + B))}{\tau_\infty(z)} = \textrm{pCap}(A)\cdot \textrm{pCap}(B).$$ 
	\end{conjecture}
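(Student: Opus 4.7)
The plan is to reduce Conjecture~\ref{conj.2} to an asymptotic for a joint two-set connection probability, and then to analyse this probability via a meeting-point decomposition. Applying Theorem~\ref{thm.defpcap} separately to the three sets $A$, $z+B$ and $A\cup(z+B)$, together with translation invariance $\textrm{pCap}(z+B)=\textrm{pCap}(B)$ and inclusion–exclusion, for any $z$ with $A\cap(z+B)=\emptyset$ I obtain
\[
\textrm{pCap}(A)+\textrm{pCap}(B)-\textrm{pCap}(A\cup(z+B))
=\lim_{\|w\|\to\infty}\frac{\mathbb P(w\longleftrightarrow A,\ w\longleftrightarrow z+B)}{\tau(w)}.
\]
It therefore suffices to prove that this iterated limit divided by $\tau_\infty(z)$ converges to $\textrm{pCap}(A)\,\textrm{pCap}(B)$ as $\|z\|\to\infty$.

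For the upper bound I would use the Aizenman–Newman tree-graph inequality, sharpened to the two sets through a meeting-point $u$ on the backbone of the cluster of $w$:
\[
\mathbb P(w\longleftrightarrow A,\ w\longleftrightarrow z+B)\le \sum_u \tau(w-u)\,\mathbb P(u\longleftrightarrow A)\,\mathbb P(u\longleftrightarrow z+B).
\]
Dividing by $\tau(w)$ and letting $\|w\|\to\infty$, the factor $\tau(w-u)/\tau(w)$ tends to $1$ uniformly for $u$ in compact sets, while the contribution of $u$ with $\|u\|$ of order $\|w\|$ is negligible since the two-point function decays like $\|w\|^{2-d}$. Using Theorem~\ref{thm.defpcap} in the form $\mathbb P(u\longleftrightarrow A)\sim \textrm{pCap}(A)\,\tau(u)$ (and analogously for $B$) together with~\eqref{twopoint}, the remaining sum reduces asymptotically to $\textrm{pCap}(A)\,\textrm{pCap}(B)\cdot a_d^2\sum_u \|u\|^{2-d}\|u-z\|^{2-d}$, which is of order $\|z\|^{4-d}\asymp \tau_\infty(z)$. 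A matching lower bound would be produced by FKG applied to explicit three-path configurations through a meeting point at an intermediate scale, combined with the lower bound half of Theorem~\ref{thm.defpcap}.

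The principal obstacle is pinning down the correct multiplicative constant. The tree-graph bound above is generically off by the triangle diagram, so the heuristic produces the right order but not obviously the right constant in front of $\tau_\infty(z)$. To close this gap, I expect one has to replace the tree-graph inequality by a lace-expansion identity for the joint two-arm event, in the spirit of~\cite{vdHJ,CCHS25}, and compare the resulting convolution kernel directly with the lace-expansion formula for $\tau_\infty(z)$; the matching of constants should then be automatic since the same backbone kernel appears on both sides. A secondary difficulty is the non-uniformity of the asymptotic $\mathbb P(u\longleftrightarrow A)\sim \textrm{pCap}(A)\,\tau(u)$ for $u$ close to $A$: the sum over $u$ must be split into a bulk regime, handled by the asymptotic, and a boundary regime near $A$ or $z+B$, controlled by the finiteness of the triangle diagram, which holds since $d>6$.
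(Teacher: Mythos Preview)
This statement is Conjecture~\ref{conj.2} in the paper, and the paper does \emph{not} prove it: the authors explicitly write, just before stating it, ``Unfortunately, we were not able to prove such a limiting result'' (referring already to the weaker Conjecture~\ref{conj.1}, the case $B=\{0\}$). So there is no proof in the paper to compare against; the paper only establishes the order-of-magnitude statement in Theorem~\ref{thm.IIC}, leaving the exact constant open.

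Your proposal is not a proof but a heuristic outline, and to your credit you say so yourself. The reduction in your first display is correct and is exactly the identity the paper records just after Conjecture~\ref{conj.1}. Your tree-graph upper bound and FKG lower bound, if carried out carefully, would reproduce the two-set analogue of Theorem~\ref{thm.IIC}, i.e.\ the statement that the ratio is bounded above and below by positive constants times $\textrm{pCap}(A)\,\textrm{pCap}(B)$; this is not new, since it follows from Theorem~\ref{thm.IIC} together with Theorem~\ref{thm.twosets}. The genuine content of the conjecture is the \emph{exact} constant, and here your proposal contains no argument: you correctly observe that the BK/tree-graph route loses a triangle-diagram factor, and your suggestion to ``replace the tree-graph inequality by a lace-expansion identity'' is a research direction, not a proof step. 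In particular, the claim that ``the matching of constants should then be automatic since the same backbone kernel appears on both sides'' is precisely the unproved assertion; the lace-expansion representation of $\tau_\infty$ in~\cite{vdHJ} does not obviously line up term-by-term with any known expansion of the two-arm probability $\mathbb P(w\leftrightarrow A,\ w\leftrightarrow z+B)$, and producing such an identity with controllable error is the whole difficulty.

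In short: you have correctly identified the reduction and the obstruction, but you have not overcome the obstruction, and neither has the paper. What you have written would be a reasonable paragraph in a discussion of open problems, but it is not a proof of Conjecture~\ref{conj.2}.
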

	Again using the analogy between critical branching random walks and high dimensional percolation, it is natural to expect that the p-capacity and the branching capacity of a set should be of the same order, i.e.\ that their ratio should be bounded from above and below by universal constants only depending on dimension. In~\cite{ASS23}, it was shown that the branching capacity of a set is comparable to its $(d-4)$-capacity, denoted as $\textrm{Cap}_{d-4}$, which is an example of Bessel-Riesz capacity, and is defined via the variational formula for any finite set $A\subset \mathbb Z^d$ 
	\begin{equation}\label{var.C4}
		\textrm{Cap}_{d-4}(A) =  \Big (\inf \big\{\sum_{x,y\in A} (1+\|y-x\|)^{4-d} \mu(x)\mu(y) : \mu \textrm{ probability measure on }A\big\}\Big)^{-1}.
	\end{equation}
	Hence a fundamental problem in our setting would be to show that the same comparison holds between the $(d-4)$-capacity and the p-capacity of a set. It turns out that at least one direction is not difficult to prove. 
	\begin{proposition}\label{prop:lowercap}
		There exists a constant $c>0$, such that for any finite $A\subset \mathbb Z^d$, 
		$$\textrm{pCap}(A) \ge c \cdot \textrm{Cap}_{d-4}(A). $$ 
	\end{proposition}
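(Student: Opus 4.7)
\textbf{Proof plan for Proposition~\ref{prop:lowercap}.}
The strategy is a weighted second moment (Paley--Zygmund) argument. Fix a finite set $A\subset \mathbb Z^d$ and a probability measure $\mu$ on $A$, and introduce the random variable
\[
N \;=\; \sum_{x\in A} \mu(x)\,\mathbb 1\{z\longleftrightarrow x\}.
\]
Clearly $\{N>0\}\subset \{z\longleftrightarrow A\}$, so
$\mathbb P(z\longleftrightarrow A)\ge \mathbb P(N>0)\ge (\mathbb E N)^2/\mathbb E[N^2].$
The first moment is $\mathbb E N=\sum_{x\in A}\mu(x)\tau(z-x)$, and by~\eqref{twopoint} together with $\|z\|\to\infty$ (while $A$ is fixed) one has $\tau(z-x)\sim \tau(z)$ uniformly in $x\in A$, so $\mathbb E N=(1+o(1))\,\tau(z)$.

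The main content lies in the second moment. Write $\mathbb E[N^2]=\sum_{x,y\in A}\mu(x)\mu(y)\,\tau_3(z,x,y)$, where $\tau_3(z,x,y)=\mathbb P(z\longleftrightarrow x,\; z\longleftrightarrow y)$. The goal is to establish the three-point estimate
\[
\tau_3(z,x,y)\;\le\; C\,\tau(z)\,(1+\|x-y\|)^{4-d}\qquad\text{for all }x,y\in A\text{ and }\|z\|\text{ large},
\]
with an absolute constant $C=C(d)$. I would derive this via the tree-graph (or BK) inequality
$\tau_3(z,x,y)\le \sum_w \tau(z-w)\tau(x-w)\tau(y-w)$, together with the two-point asymptotic~\eqref{twopoint}. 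Splitting the sum according to whether $\|w\|\le \tfrac12\|z\|$ or $\|w\|> \tfrac12\|z\|$: in the first region $\tau(z-w)\le C\tau(z)$ factors out, and what remains is bounded by the convolution estimate $\sum_w \tau(x-w)\tau(y-w)\le C(1+\|x-y\|)^{4-d}$, which holds for $d>4$ and is standard given $\tau(u)\asymp (1+\|u\|)^{2-d}$; in the second region one uses $\tau(x-w)\tau(y-w)\le C\|w\|^{4-2d}$ and checks that the resulting convolution with $\tau(z-w)$ is dominated by $\tau(z)\cdot(1+\|x-y\|)^{4-d}$ when $\|z\|$ is much larger than the diameter of $A$ (here the hypothesis $d>6$ gives comfortable room).

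Granting this three-point bound, one has
\[
\mathbb E[N^2]\;\le\; C\,\tau(z)\sum_{x,y\in A}\mu(x)\mu(y)(1+\|x-y\|)^{4-d}\;=\;C\,\tau(z)\,I_{d-4}(\mu),
\]
where $I_{d-4}(\mu)$ denotes the $(d-4)$-energy appearing in~\eqref{var.C4}. Combining with the first-moment estimate and Paley--Zygmund gives
\[
\frac{\mathbb P(z\longleftrightarrow A)}{\tau(z)}\;\ge\;\frac{(1+o(1))}{C\,I_{d-4}(\mu)}\qquad\text{as }\|z\|\to\infty,
\]
and then Theorem~\ref{thm.defpcap} yields $\textrm{pCap}(A)\ge (C\,I_{d-4}(\mu))^{-1}$. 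Minimising $I_{d-4}(\mu)$ over probability measures $\mu$ on $A$ and using the variational characterisation~\eqref{var.C4} of $\textrm{Cap}_{d-4}(A)$ gives the claimed inequality $\textrm{pCap}(A)\ge c\,\textrm{Cap}_{d-4}(A)$.

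The hard part is the three-point bound; the first moment and the Paley--Zygmund step are completely soft. In practice the three-point estimate is a well-known consequence of the tree-graph inequality combined with convolution bounds for $\|\cdot\|^{2-d}$ in dimension $d>6$, so I expect the only genuine work to be a careful region-splitting argument to confirm the factor $\tau(z)\,(1+\|x-y\|)^{4-d}$ with the correct uniformity in $x,y\in A$ as $\|z\|\to\infty$.
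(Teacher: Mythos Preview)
Your proposal is correct and follows essentially the same approach as the paper: a Paley--Zygmund/second moment argument with the BK (tree-graph) inequality to bound the three-point function, followed by the standard convolution estimate $\sum_w \tau(x-w)\tau(y-w)\asymp (1+\|x-y\|)^{4-d}$. The paper packages this as Lemma~\ref{lem:hitdiameterdistance} (with the cosmetic variant of normalising the test variable by $1/\tau(z-x)$ and splitting the convolution around the distant point rather than around the origin), and then invokes Theorem~\ref{thm.defpcap} exactly as you do.
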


	\begin{remark}\rm{
			The proof of the above proposition employs the second moment argument that originated in~\cite{FitzSal} and~\cite{Salis} and also appeared in~\cite{Martincap}, adapted to the percolation setup. 
			Actually, the proof provides a more quantitative result. It shows (see Lemma~\ref{lem:hitdiameterdistance}) that for any $\varepsilon>0$, there exists a constant $c>0$, such that for any finite set~$A$ containing the origin and any $z$ with $\|z\|\geq \varepsilon \cdot \max_{a\in A} \|a\|$, one has $$\mathbb P(z\longleftrightarrow A) \ge c \cdot \textrm{Cap}_{d-4}(A)\cdot \tau(z).$$ 
			}
	\end{remark} 
	Showing the converse inequality, namely that $\textrm{pCap}(A)$ is at most of order $\textrm{Cap}_{d-4}(A)$ for any finite~$A$, remains an important challenge. Nevertheless, using similar ideas as in~Kozma and Nachmias~\cite{KN11}, we are able to prove the equivalence for balls. Before stating the result let us fix some notation. For $z=(z_1,\dots,z_d)$, we let $\|z\|_\infty = \max_{i=1,\dots,d} |z_i|$ and for $r>0$ we set  
	\[
	B(0,r) = \{z\in \mathbb Z^d : \|z\|_\infty \le r\}.
	\]
	By the definition~\eqref{var.C4} it follows that $\textrm{Cap}_{d-4}(B(0,r))$ is of order $r^{d-4}$.  
	\begin{theorem}\label{thm.pcap.ball}
		There exists a constant $C>0$, such that for any $r\ge 1$, 
		$$\textrm{pCap}(B(0,r)) \le C \cdot r^{d-4}. $$ 
	\end{theorem}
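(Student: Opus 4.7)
The plan is a second-moment argument in the spirit of the Kozma-Nachmias proof of the one-arm exponent, adapted to a far source $z$, with the pioneers estimate flagged in the abstract doing the heavy lifting. Fix $\|z\|\ge 2r$ and set $X_z := |\mathcal C(z) \cap B(0,r)|$, so that $\{z\longleftrightarrow B(0,r)\} = \{X_z>0\}$. Writing
\[
\mathbb P(z\longleftrightarrow B(0,r)) \;=\; \frac{\mathbb E[X_z]}{\mathbb E[X_z\mid z\longleftrightarrow B(0,r)]},
\]
the numerator is immediately controlled by \eqref{twopoint}: for $\|z\|\ge 2r$, $\tau(z-y)\le C\|z\|^{2-d}$ uniformly in $y\in B(0,r)$, hence
\[
\mathbb E[X_z] \;=\; \sum_{y\in B(0,r)} \tau(z-y) \;\le\; C\|z\|^{2-d}\,r^d.
\]
Combined with Theorem~\ref{thm.defpcap} and the asymptotic $\tau(z)\sim a_d\|z\|^{2-d}$, the bound $\textrm{pCap}(B(0,r))\le Cr^{d-4}$ therefore reduces to the lower bound
\[
\mathbb E[X_z\mid z\longleftrightarrow B(0,r)] \;\ge\; c\,r^4,
\]
uniformly in $z$ with $\|z\|$ sufficiently large relative to $r$.

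To obtain this lower bound the plan is to decouple across $\partial B(0,r)$. Let $\mathcal P \subseteq \partial B(0,r)$ be the set of \emph{pioneers}, namely those vertices $v$ that are connected to $z$ using only edges outside $B(0,r)$. By the spatial Markov property of Bernoulli percolation, conditionally on $\mathcal P$ the configuration inside $B(0,r)$ is independent of the configuration outside, and $\mathcal C(z)\cap B(0,r)$ equals the union of the $B(0,r)$-clusters of the vertices of $\mathcal P$; moreover $\{z\longleftrightarrow B(0,r)\}=\{\mathcal P\neq\emptyset\}$. A single boundary vertex yields an expected interior cluster of order only $r^2$ (since $\sum_{y \in B(0,r)} \|v-y\|^{2-d} \asymp r^2$ for $v\in \partial B(0,r)$), so there is a factor $r^2$ gap to bridge. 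The upgrade comes from two effects: first, the pioneer conditioning forces, through the presence of a long open connection coming in from $z$, an interior cluster with a non-trivial arm into $B(0,r)$ rather than a generic tiny cluster clinging to $\partial B(0,r)$; second, the new large deviations bound on pioneers announced in the abstract controls the number and spread of $\mathcal P$ sharply enough to rule out the degenerate regimes where correlations between pioneer clusters would cancel this gain. An internal second-moment argument inside $B(0,r)$ modeled on the Kozma-Nachmias one-arm proof then converts these inputs into the required $c r^4$ estimate.

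The main obstacle is precisely this lower bound on the conditional expectation: the factor-$r^2$ improvement over the naive single-pioneer estimate is exactly what the new pioneer large deviations bound is designed to supply, and this is where the bulk of the technical work lies. Once it is in place, dividing the two displays above yields $\mathbb P(z\longleftrightarrow B(0,r)) \le C r^{d-4}\|z\|^{2-d}$ uniformly for $\|z\|\ge 2r$, and passing to the limit $\|z\|\to\infty$ in the definition of $\textrm{pCap}$ from Theorem~\ref{thm.defpcap} concludes.
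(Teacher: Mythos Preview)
Your plan is correct and matches the paper's: both reduce the bound to showing that, conditionally on $\{z\longleftrightarrow B(0,r)\}$, one has $|\mathcal C(z)\cap B(0,r)|\gtrsim r^4$ with uniformly positive probability, and both obtain this via a Kozma--Nachmias style second-moment argument on the inward growth from the pioneers on $\partial B(0,r)$ (Proposition~\ref{thm:2}) together with a lower bound on the pioneer count (Lemma~\ref{lem:rsquaredwhenithits}). Your identity $\mathbb P(X_z>0)=\mathbb E[X_z]/\mathbb E[X_z\mid X_z>0]$ is in fact a slightly more direct packaging than the paper's thinning formula (Proposition~\ref{pro:green} with $\lambda=r^{-4}$), but the technical content is the same. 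One small slip: conditioning on the pioneer set $\mathcal P$ alone does \emph{not} make the configuration inside $B(0,r)$ independent of the outside; you must condition on the full exterior cluster $\mathcal C(z;B(0,r)^c)$ (which then determines $\mathcal P$), as the paper does throughout.
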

	Together with the general lower bound from Proposition~\ref{prop:lowercap} this completely characterises the order of magnitude of $\textrm{pCap}(B(0,r))$. On the other hand, it remains an interesting open problem to estimate the p-capacity of $m$-dimensional balls, with $m<d$, similarly to what is known for branching capacity~\cite{Zhu}.

	\begin{remark}
		\rm{
		The method of proof of Theorem~\ref{thm.pcap.ball} can also be used to give a simplified proof of the one arm exponent established in~\cite{KN11} as long as~\eqref{twopoint} holds and $d\geq 8$. We present this argument in Section~\ref{sec:newproof}. 
		}
	\end{remark}

	In order to prove Theorems~\ref{thm.IIC} and~\ref{thm.pcap.ball} our main tool is a large deviations estimate for the number of~\textit{pioneers}, which are the points on the boundary of a cube centred at the origin which are connected to the origin using open paths lying entirely in the cube. This type of large deviations estimate in percolation finds its roots in the tree graph inequality of Aizenman and Newman~\cite{AN84} (see also~\cite{A96}), and has been used to provide similar bounds for the size of a percolation cluster \textit{inside} a  ball. The original part of our result is to also control the number of pioneers in arbitrarily small balls on the boundary of the cube. (In fact our proof works as well for the number of pioneers on a ball of a hyperplane.) Controlling the number of pioneers in small balls on the boundary is particularly useful when dealing with techniques based on the notion of regular points. This notion was first introduced in~\cite{KN11} for the derivation of the one arm exponent, and it also later appeared in a series of papers~\cite{CH20, CHS23,CCHS25} (see Section~\ref{sec.regpoints} for the definition). 
	
	Roughly speaking a pioneer is called regular if it is not surrounded by too many other points of the cluster of the origin. There are of course different possible ways to formalise this, but unlike~\cite{KN11} our result allows for a quite simple formulation and shows that typically most of the pioneers are regular.

	To be more precise now, given $r,s>0$, and $x\in \partial B(0,r)$ (where by $\partial B(0,r)$ we mean the set of points in the ball $B(0,r)$ which have at least one neighbour outside of the ball), let $Q_s(x) = B(x,s)\cap \partial B(0,r)$. 
	We also define $\mathcal C_r(0)$ to be the set of points of the cube $B(0,r)$ that are connected to the origin using open paths lying entirely in the cube. Our result is as follows. 
	\begin{theorem}\label{thm.LD}
		There exist positive constants $c$ and $C$, such that for any $r,s,t\ge 1$ and any $x\in \partial B(0,r)$, 
		$$\mathbb P\Big(|\mathcal C_r(0)\cap Q_s(x)|\ge t\Big) \le \frac{C}{r^2}\cdot \exp (- \frac{ct}{s^2}). $$ 
	\end{theorem}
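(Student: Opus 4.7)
Set $N := |\mathcal C_r(0)\cap Q_s(x)|$. The plan is to establish the factorial moment bound
\[
\mathbb E[N^k] \;\le\; \frac{C_0}{r^2}\cdot k!\cdot (Cs^2)^k \qquad (k\ge 1)
\]
and then conclude by the $k$-th order Markov inequality $\mathbb P(N\ge t)\le t^{-k}\mathbb E[N^k]$, optimising $k$ of order $t/(eCs^2)$. Stirling's estimate turns $k!(Cs^2)^k/t^k$ into $\exp(-ct/s^2)$ for a suitable $c>0$, yielding the stated bound $\mathbb P(N\ge t)\le (C_0/r^2)\exp(-ct/s^2)$. Note that for $t$ of order $1$ the exponential factor is of order $1$, so the prefactor must already encode the correct order of $\mathbb P(N\ge 1)$.

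\textbf{Tree-graph expansion.} Expand
\[
\mathbb E[N^k]\;=\;\sum_{y_1,\ldots,y_k\in Q_s(x)}\mathbb P\bigl(0\longleftrightarrow y_i\text{ in }B(0,r),\;1\le i\le k\bigr).
\]
The Aizenman--Newman tree-graph inequality, adapted to the in-box cluster $\mathcal C_r(0)$, bounds each joint connection probability by a sum over skeleton trees with leaves $\{y_1,\ldots,y_k\}$, root $0$, and up to $k-1$ internal branching vertices constrained to lie in $B(0,r)$; each edge of the skeleton contributes a factor $\tau(\cdot)$. Summing over $y_1,\ldots,y_k$ and over internal vertices reduces the moment bound to an iterated convolution of $\tau$ over suitable regions.

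\textbf{Local convolution and exponential factor.} The multiplicative $(Cs^2)^k$ in the moment bound is produced by the local estimate: for $u,v$ within distance $O(s)$ of $x$,
\[
\sum_{w\in\mathbb Z^d}\tau(u-w)\,\tau(w-v)\;\le\;Cs^2\cdot\tau(u-v),
\]
which follows from the input $\tau(z)\asymp \|z\|^{2-d}$ of \eqref{twopoint} together with a direct computation in the regime where all three points are nearby. Iterated along the local branching structure of the skeleton, each internal vertex close to $x$ yields a factor of order $s^2$ upon being summed out. This is the only place where the scale $s$ enters, and it accounts for the exponential tail at the intrinsic cluster scale $s^2$.

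\textbf{The $1/r^2$ prefactor and main obstacle.} The delicate ingredient is the sharp $1/r^2$ prefactor. A crude application of the tree-graph bound, with a single ``backbone'' from $0$ to a local branching vertex $w_0$ near $x$ costing $\tau(w_0)\asymp r^{2-d}$ and leaf sums contributing a factor $s^{d-1}$ per leaf, only produces a prefactor of order $s^{d-1}r^{2-d}$. For $d\ge 7$ and $s\le r^{(d-4)/(d-1)}$ this is already dominated by $1/r^2$, but for $s$ approaching $r$ the naive bound is far too weak. Extracting the correct $1/r^2$ in this regime requires exploiting the restriction of the cluster to $B(0,r)$ in a stronger way: essentially, one shows that a backbone forced to remain inside the box carries an additional ``one-arm'' type cost beyond the raw $\tau(w_0)$ estimate, analogously to (but simpler than) the triangle-condition argument of Kozma--Nachmias~\cite{KN11}. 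Carrying this out uniformly in $r,s,t$, and in a way that combines cleanly with the local convolution estimate of the previous paragraph, is the principal technical challenge and is where the bulk of the work will lie.
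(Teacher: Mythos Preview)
Your overall architecture --- factorial moment bounds via the tree-graph inequality, then Markov's inequality with $k$ optimised of order $t/s^2$ --- is indeed the skeleton of the paper's proof (encapsulated in Theorem~\ref{thm.generalLD}). However, there is a genuine gap at the level of the exponent.

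With only the full-space input~\eqref{twopoint}, the tree-graph iteration yields at best $\exp(-ct/s^3)$, not $\exp(-ct/s^2)$; this is stated explicitly in Remark~\ref{rem.simpleLD}. Your local convolution estimate $\sum_w\tau(u-w)\tau(w-v)\le Cs^2\,\tau(u-v)$ for $\|u-v\|\lesssim s$ is correct as written, but it is not the inductive hypothesis the moment recursion actually preserves. The recursion behind Theorem~\ref{thm.generalLD} hinges on the triangle-type condition $\sum_z h(y,z)\bigl(h*\varphi(z)\bigr)^2\le C\,h*\varphi(y)$ with $\varphi=s^{-\alpha}\1_{Q_s}$. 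Taking $h=\tau$ and $y$ within distance $s$ of $Q_s$, one has $h*\varphi(y)\asymp s^{1-\alpha}$, whereas the left side is already $\gtrsim s^{2-2\alpha}\sum_{z:d(z,Q_s)\le s}\tau(y-z)\asymp s^{4-2\alpha}$, forcing $\alpha\ge 3$. In other words, each inductive step costs a factor $s^3$, not $s^2$; your claim that ``each internal vertex close to $x$ yields a factor of order $s^2$ upon being summed out'' cannot be sustained across the full iteration with the unrestricted two-point function.

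To reach $s^2$, the paper transfers to the half-space $\mathcal H$ and replaces $\tau$ by the sharper bound $\tau_{\mathcal H}(x,y)\lesssim(1+r_{x,y})(1+r_{y,x})/(1+\|x-y\|^d)$, where $r_{x,y}=d(x,\partial\mathcal H)\wedge\|x-y\|$, due to Panis~\cite{Romain} building on~\cite{CH20,CHS23,HMS23}. The extra boundary decay (roughly a factor $r_y r_z/\|y-z\|^2$ over full-space $\tau$ for $y,z$ near $\partial\mathcal H$) suppresses precisely the near-$Q_s$ contribution that forced $\alpha\ge 3$, and Lemma~\ref{lem.LD} then verifies the triangle condition with $\alpha=2$. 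This half-space two-point function estimate is the essential input your proposal is missing.

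Finally, the $1/r^2$ prefactor you flag as the principal challenge is in fact the cheap part once the half-space machinery is in place. The exponential-moment bound gives $\mathbb E\big[e^{\lambda N/s^2}\,\1(0\leftrightarrow Q_s(x))\big]\le h*\varphi(0)+\mathbb P(0\leftrightarrow Q_s(x))$; the first term is $\lesssim s^{d-3}/r^{d-1}\le r^{-2}$ directly from the half-space $h$-bound, and the second is $\lesssim r^{-2}$ by the Kozma--Nachmias one-arm estimate~\eqref{onearm}. No separate ``in-box backbone'' argument is needed. The real work is in the exponent, not the prefactor.
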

	Some additional comments are in order concerning this result. First, we notice that the factor $1/r^2$ corresponds to the probability that the cluster of the origin hits the boundary of the ball $B(0,r)$, as shown in~\cite{KN11}. However, we conjecture that the probability for $\mathcal C_r(0)$ to intersect $Q_s(x)$ is at most of order $s^{d-3}/r^{d-1}$, uniformly in $x\in \partial B(0,r)$. If true, one could replace the factor $1/r^2$ by this quantity in our result, see the proof of the theorem in Section~\ref{sec.LD}. Secondly, using only the estimate $\tau(x,y)\asymp \|x-y\|^{2-d}$, one can get in a simple way the bound 
	$$\mathbb P\Big(|\mathcal C_r(0)\cap Q_s(x)|\ge t\Big) \le C\cdot \exp (- \frac{ct}{s^3}),$$ 
	without relying at all on the sophisticated results  of~\cite{KN11,CH20,CHS23}, see Remark~\ref{rem.simpleLD}. 
	In turn, this allows for a slightly more convenient notion of regular point (see Definition~\ref{def.Kreg}) than in~\cite{KN11}, where it originally appeared. Indeed our definition is purely geometric, while in~\cite{KN11} it was defined in terms of a conditional probability. Lastly, we conjecture that the factor $s^2$ appearing in the exponential is optimal, in the sense that conditionally on hitting $Q_s(x)$, the size of the cluster $\mathcal C_r(0)$ in $Q_s(x)$ should be typically of order $s^2$. In other words, we believe that a lower bound of the following type should hold as well: 
	$$\mathbb P\Big(|\mathcal C_r(0)\cap Q_s(x)|\ge t\Big) \ge \mathbb P(\mathcal C_r(0)\cap Q_s(x)\neq \emptyset) \cdot \exp (- \frac{c't}{s^2}), $$ 
	for some other constant $c'>0$.

	The proof of Theorem~\ref{thm.LD} heavily relies on bounds on the two point function in a half space which were obtained in~\cite{CH20,CHS23} and have been recently strengthened by Panis~\cite{Romain}. We further use a general argument from~\cite{AS24}, which transfers estimates on the two-point function into exponential large deviations bounds.

	We present now two applications of our results and techniques. The first one concerns the  probability of connection of two distant sets, and is a natural extension of Theorem~\ref{thm.defpcap}. 
	\begin{theorem}\label{thm.twosets}
		Let $A$ and $B$ be finite subsets of $\mathbb Z^d$. Then 
		$$\lim_{\|z\|\to \infty} \frac{\mathbb P\big(A \longleftrightarrow z+B\big)}{\tau(z)} = \textrm{pCap}(A) \cdot \textrm{pCap}(B). $$ 
	\end{theorem}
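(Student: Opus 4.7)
The plan is to reduce the theorem to a variant of Theorem~\ref{thm.defpcap} in which the test point $z$ is replaced by the translated test set $z+B$. Precisely, once one establishes
\begin{equation*}
\lim_{\|z\|\to\infty}\frac{\mathbb{P}(A\longleftrightarrow z+B)}{\mathbb{P}(0\longleftrightarrow z+B)} = \textrm{pCap}(A),
\end{equation*}
the theorem follows by multiplying by $\mathbb{P}(0\longleftrightarrow z+B)/\tau(z)$ and invoking Theorem~\ref{thm.defpcap} applied to $-B$ in place of $A$ (together with translation invariance of the two-point function), which yields $\mathbb{P}(0\longleftrightarrow z+B)/\tau(z)\to \textrm{pCap}(B)$ as $\|z\|\to\infty$.

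\textbf{Pioneer decomposition and asymptotics.} By translation invariance assume $A\subset B(0,R_0)$ for some fixed $R_0$. Fix an intermediate scale $R$ with $R_0\ll R\ll \|z\|$ and denote by $\mathcal{C}_R(A)$ the set of points of $B(0,R)$ connected to $A$ by open paths contained in $B(0,R)$. Every connection from $A$ to $z+B$ must exit $B(0,R)$ through a pioneer $v\in \mathcal{C}_R(A)\cap \partial B(0,R)$. Because $\mathcal{C}_R(A)$ depends only on edges inside $B(0,R)$ while the remaining portion of the connection uses the independent outer edges, a pioneer/tree-graph decomposition (the same one underlying the proof of Theorem~\ref{thm.defpcap}) yields
\begin{equation*}
\mathbb{P}(A\longleftrightarrow z+B) = \sum_{v\in \partial B(0,R)} \mathbb{P}\bigl(A\longleftrightarrow v\text{ in }B(0,R)\bigr)\,\mathbb{P}(v\longleftrightarrow z+B) + \mathrm{err}(R,z),
\end{equation*}
where $\mathrm{err}(R,z)$ accounts for paths re-entering $B(0,R)$ and for configurations in which several pioneers simultaneously produce a connection to $z+B$. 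For the summands, Theorem~\ref{thm.defpcap} applied to $-B$ together with $\tau(z-v)/\tau(z)\to 1$ uniformly in $\|v\|\le R$ (via~\eqref{twopoint}) gives $\mathbb{P}(v\longleftrightarrow z+B)\sim \mathbb{P}(0\longleftrightarrow z+B)$ as $\|z\|\to\infty$. Specialising the same decomposition to $B=\{0\}$ recovers Theorem~\ref{thm.defpcap} and identifies
\begin{equation*}
\lim_{R\to\infty}\sum_{v\in\partial B(0,R)} \mathbb{P}\bigl(A\longleftrightarrow v\text{ in }B(0,R)\bigr) = \textrm{pCap}(A).
\end{equation*}
Dividing the displayed identity by $\mathbb{P}(0\longleftrightarrow z+B)$, sending $\|z\|\to\infty$ and then $R\to\infty$, yields the intermediate limit stated in the first paragraph.

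\textbf{Main obstacle.} The technical heart of the argument is a quantitative control of $\mathrm{err}(R,z)$: one must show that the probability of double-pioneer contributions and the probability that the connection to $z+B$ uses a path re-entering $B(0,R)$ are both $o(\tau(z))$ as $\|z\|\to\infty$ followed by $R\to\infty$. This is exactly where Theorem~\ref{thm.LD} enters, providing sub-exponential tail bounds on $|\mathcal{C}_R(A)\cap \partial B(0,R)|$ (and on its localisation in small boundary patches), hence moment estimates that combine with $\tau(z)$-type two-point sums to deliver the required negligibility.
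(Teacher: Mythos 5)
Your approach is genuinely different from the paper's: the paper proves Theorem~\ref{thm.twosets} by decomposing on the \emph{last pivotal edge} for the connection from $A$ to $z+B$, localising as in the proof of Theorem~\ref{thm.defpcap}, and then invoking the IIC convergence result of~\cite{CCHS25}, together with~\eqref{expr.pivotal.pcap} to recover $\textrm{pCap}(A)$. You instead propose a pioneer decomposition at an intermediate scale $R$. Unfortunately this route has a genuine gap which is not merely technical.

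The problem is that the ``err$(R,z)$'' in your pioneer formula does not vanish after normalising by $\tau(z)$ and sending $R\to\infty$. Conditioning on $\mathcal{C}_R(A)=H$ gives the exact identity $\mathbb{P}(A\longleftrightarrow z+B)=\sum_H\mathbb{P}(\mathcal{C}_R(A)=H)\,\mathbb{P}\big(\bigcup_{v\in H\cap\partial B(0,R)}\{v\stackrel{\text{off } H}\longleftrightarrow z+B\}\big)$. Replacing the probability of the union by $\sum_v\mathbb{P}(v\longleftrightarrow z+B)$ introduces two errors: (a) double counting among pioneers, which is indeed $O(\tau(z)^2\cdot\mathbb{E}[X_R^2])=o(\tau(z))$ for fixed $R$; and (b) the ``via $H$'' correction $\sum_v\mathbb{P}(v\stackrel{\text{via } H}\longleftrightarrow z+B)$, which is \emph{not} small. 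By BK this is bounded by $\tau(z)\sum_{w\in H}\tau(v-w)$, and the dominant contribution comes from cluster vertices $w$ at $O(1)$ distance from each pioneer $v$ — the local four-dimensional structure of the cluster gives $\sum_{w\in \mathcal C_R(A),\,w\neq v}\tau(v-w)\asymp 1$ per pioneer — so averaging over $H$ yields an error of order $\mathbb{E}[X_R]\cdot\tau(z)\asymp\tau(z)$, uniformly in $R$. As a consequence the quantity $\sum_{v\in\partial B(0,R)}\mathbb{P}(A\longleftrightarrow v\text{ in }B(0,R))=\mathbb{E}[X_R(A)]$ does \emph{not} converge to $\textrm{pCap}(A)$; one only gets $\liminf_R\mathbb{E}[X_R(A)]\geq\textrm{pCap}(A)$ from the union bound, and the discrepancy is exactly the persistent via error. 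Your claim that specialising to $B=\{0\}$ ``recovers Theorem~\ref{thm.defpcap} and identifies'' the limit is therefore circular: it presupposes err$(R,z)=o(\tau(z))$, which is the step that fails. Theorem~\ref{thm.LD} does not help here, since it controls the total number of pioneers (and their count in boundary patches), but what is needed is control of the cluster density in balls of \emph{all} radii around each pioneer — exactly the difficulty that the notion of regular/line-good points in Section~\ref{sec.regpoints} is designed to address, and even that machinery only yields comparisons up to multiplicative constants (as in Theorem~\ref{thm.IIC}), not the exact limit required here. The pivotal-edge decomposition the paper uses avoids this overcounting entirely, since by construction it singles out a unique edge.
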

	
	Moreover, in Section~\ref{sec:connectingdistant} we provide a lower bound on the probability that two sets $A$ and $B+z$ are connected in terms of their $(d-4)$-capacities as long as $\|z\|$ is larger than a constant multiple of the diameter of $A\cup B$.

	In particular, by combining this result with Theorem~\ref{thm.pcap.ball}, one deduces a precise estimate for the probability that two distant balls are connected: it is of order their distance to the power $2-d$, times the product of their radii to the power $d-4$. 
	Our second application improves upon the one-arm estimate of Kozma and Nachmias~\cite{KN11} (see Theorem~\ref{thm:onearm} below).  
	\begin{theorem}\label{thm.onearm}
		There exists $C>0$, such that for any finite  $A\subset\mathbb Z^d$, for any $r$ large enough, 
		$$\frac 1{Cr^2}\cdot \textrm{pCap}(A)\le  \mathbb P\big(A\longleftrightarrow \partial B(0,r)\big)\le \frac{C}{r^2}\cdot \textrm{pCap}(A). $$ 
	\end{theorem}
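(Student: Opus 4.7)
The plan is to apply a Paley-Zygmund second moment argument combined with the regular pioneer machinery developed in the proof of Theorem~\ref{thm.pcap.ball} to the two pioneer counts
\[
Q_A := |\{z\in \partial B(0,r): z\longleftrightarrow A\}|,\qquad P_A := |\{z\in \partial B(0,r): z\in \mathcal C_r(A)\}|,
\]
observing that $\{A\longleftrightarrow \partial B(0,r)\}=\{Q_A\ge 1\}\supseteq \{P_A\ge 1\}$, so that $Q_A$ will be used for the lower bound and $P_A$ for the upper bound.

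For the lower bound, I would first show that $\mathbb E[Q_A]\asymp r\cdot \pcap{A}$ whenever $r$ is much larger than $\diam{A}$, by summing the two-sided asymptotic $\mathbb P(z\longleftrightarrow A)\asymp \pcap{A}\cdot \tau(z)$ provided by Theorem~\ref{thm.defpcap} over the $\asymp r^{d-1}$ boundary points $z\in \partial B(0,r)$, each contributing $\tau(z)\asymp r^{2-d}$. A BK-splitting at a common branching vertex then gives, for any $z,z'\in \partial B(0,r)$,
\[
\mathbb P(z\longleftrightarrow A,\,z'\longleftrightarrow A)\le \sum_{w\in \mathbb Z^d}\tau(z-w)\tau(z'-w)\mathbb P(w\longleftrightarrow A),
\]
and the standard surface and volume estimates $\sum_{z\in \partial B(0,r)}\tau(z-w)\lesssim r$ (uniformly in $w$) together with $\sum_{w\in B(0,O(r))}\mathbb P(w\longleftrightarrow A)\lesssim \pcap{A}\cdot r^2$ (with a convergent tail for $d>6$) yield $\mathbb E[Q_A^2]\le C r^4\cdot \pcap{A}$. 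Paley-Zygmund then delivers
\[
\mathbb P(A\longleftrightarrow \partial B(0,r))\ge \frac{(\mathbb E Q_A)^2}{\mathbb E Q_A^2}\ge \frac{c\cdot \pcap{A}}{r^2}.
\]

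For the upper bound, the first moment bound $\mathbb E[P_A]\le Cr\cdot \pcap{A}$ is again immediate from Theorem~\ref{thm.defpcap}. It remains to show that conditionally on $\{A\longleftrightarrow \partial B(0,r)\}$ one has $P_A\ge c r^3$ with high probability; combined with the first moment bound, this will yield $\mathbb P(A\longleftrightarrow \partial B(0,r))\le \mathbb E[P_A]/(cr^3)\le C\pcap{A}/r^2$. For the conditional lower bound I would follow the regular pioneer strategy from the proof of Theorem~\ref{thm.pcap.ball}: fix any pioneer $z^*\in \partial B(0,r)\cap \mathcal C_r(A)$; by the half-space two-point function estimates of~\cite{Romain,CH20} combined with Theorem~\ref{thm.LD}, the cluster of $z^*$ inside $B(0,r)$ behaves statistically like a critical cluster issued from $z^*$ and therefore meets $\partial B(0,r)$ in an order-$r^3$ subset within a macroscopic neighbourhood of $z^*$.

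The main obstacle is this conditional lower bound: given the existence of a single pioneer, one must produce $\Omega(r^3)$ of them. This is essentially an adaptation of the core argument of Kozma and Nachmias~\cite{KN11} to the situation where the source is a general finite set $A$ rather than a single vertex. Theorem~\ref{thm.LD}, by controlling the number of pioneers in arbitrarily small boundary balls, enables a purely geometric notion of regular point and so makes this adaptation substantially cleaner than the original.
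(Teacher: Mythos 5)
Your approach is genuinely different from the paper's. The paper proves Theorem~\ref{thm.onearm} by the last-pivotal-edge decomposition and IIC localisation used for Theorems~\ref{thm.defpcap} and~\ref{thm.twosets}: writing $\{A\longleftrightarrow\partial B(0,r)\}$ as a union of events $\cP_{r,A}(\vec e)$ over pivotal edges $\vec e=(x,y)$, each such event factorises (in the limit) into a local piece whose total mass is $\pcap{A}$ times the tail event $\{y\longleftrightarrow\partial B(0,r)\}$, whose probability is $\asymp r^{-2}$ by Theorem~\ref{thm:onearm}. This gives both directions at once, with no second-moment computation.

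Your Paley--Zygmund lower bound is essentially sound, and is in the spirit of Lemma~\ref{lem:hitdiameterdistance} and Lemma~\ref{lem:2sets}: with a little care one indeed gets $\E{Q_A}\gtrsim r\,\pcap{A}$ (using Theorem~\ref{thm.defpcap} for each boundary point, once $r$ exceeds a threshold depending on $A$) and $\E{Q_A^2}\lesssim r^4\,\pcap{A}$; note that your BK-splitting at a common vertex covers only the case where $z,z'$ lie in the same cluster, while the disjoint-cluster case contributes $(\E{Q_A})^2\lesssim r^2\pcap{A}^2\lesssim r^4\pcap{A}$ and must be added (harmless once $r^2\gtrsim\pcap{A}$).

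Your upper bound, however, has a genuine gap. The claim that $P_A\ge cr^3$ conditionally on $\{A\longleftrightarrow\partial B(0,r)\}$ with high (or even bounded-below) probability is not established anywhere in the paper, and the first-moment route to it is circular: $\E{P_A\mid P_A\ge 1}\gtrsim r^3$ would require knowing $\pr{P_A\ge 1}\lesssim\pcap{A}/r^2$, which is exactly what you are trying to prove. What the paper's machinery does provide (Lemma~\ref{lem:rsquaredwhenithits}, and the Kozma--Nachmias Lemma behind it) is $\prcond{X_R>\delta R^2}{X_R>0}\ge\delta$, i.e.\ a lower bound of order $r^2$, not $r^3$; plugged into Markov this only yields $\pr{A\longleftrightarrow\partial B(0,r)}\lesssim\pcap{A}/r$. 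Your proposed heuristic — that the cluster of a single pioneer $z^*$ ``meets $\partial B(0,r)$ in an order-$r^3$ subset'' — is false on its face: by the half-space two-point bound $\tau_{\cH}(z^*,x)\lesssim(1+\|z^*-x\|)^{-d}$ for boundary points, the expected number of boundary sites in the restricted cluster of a boundary point is $O(1)$, not $r^3$. Getting an $r^3$-type statement conditionally is precisely the content of the Kozma--Nachmias one-arm upper bound and requires their full multi-scale iteration (or the paper's pivotal-edge shortcut), not a single application of Theorem~\ref{thm.LD}.
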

	We note that results of similar flavour have been obtained in the model of loop percolation in~\cite{sapoz, quirin}.  
	
	Finally, let us discuss another natural question, which is about the existence of an \textbf{equilibrium measure}. Ideally, given a finite set $A\subset \mathbb Z^d$, we 
	would like to define a probability measure $\mu$ on $A$, which should serve as a harmonic measure from infinity for the percolation cluster, i.e. given $a\in A$, we would like to define $\mu(a)$ as the limiting probability for a percolation cluster $\mathcal C(z)$ conditioned on hitting $A$, to hit it \textit{first} at $a$, as we let $z$ go to infinity. Of course the problem here is to make sense of the notion of first hitting point, for which there is no canonical choice. Note that another possible way to define the equilibrium measure, in the case of the p-capacity would also be defined via a variational formula similar to~\eqref{var.C4}, would be to take a minimizer. However, we were not able to prove any variational formula for $\textrm{pCap}$, and this remains an important open problem. Nevertheless, here we propose some possible choices of equilibrium measure, either using a random walk on the cluster (Proposition~\ref{prop:eqmeasure} and Remark~\ref{remark.eqmeasure} below), or using an arbitrary ordering of the elements of the set (Remark~\ref{rem.eqmes.ordering} below), in order to define its first hitting point. Of course, these are just two possible examples among many others, but our purpose here is just to illustrate in particular cases, that such probability measures supported on $A$ can effectively be defined. To be more precise now, given $z\in \mathbb Z^d$, and $A\subset \mathbb Z^d$, we denote by $H_A(z)$ the position of the simple random walk on $\mathcal C(z)$, starting from $z$, at its hitting time of $A$, on the event that $\mathcal C(z) \cap A \neq \emptyset$. 
	\begin{proposition}\label{prop:eqmeasure}
		Let $A$ be a finite subset of $\mathbb Z^d$, and $a\in A$. 
		The following limit exists: 
		$$e_A(a) = \lim_{\|z\| \to \infty} \frac{\mathbb P(H_A(z) = a,\, \mathcal C(z)\cap A\neq \emptyset)}{\tau(z)}.$$
		Furthermore, 
		$$\textrm{pCap}(A) = \sum_{a\in A} e_A(a). $$ 
	\end{proposition}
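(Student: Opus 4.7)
The second identity $\mathrm{pCap}(A) = \sum_{a \in A} e_A(a)$ will be immediate from the existence of each $e_A(a)$. Indeed, the events $\{H_A(z) = a,\, \mathcal C(z)\cap A\neq\emptyset\}$ for $a\in A$ partition $\{z\longleftrightarrow A\}$, so summing the defining relation yields
\[
\sum_{a\in A}\frac{\mathbb P(H_A(z) = a,\, \mathcal C(z)\cap A\neq\emptyset)}{\tau(z)} \;=\; \frac{\mathbb P(z\longleftrightarrow A)}{\tau(z)} \;\xrightarrow{\|z\|\to\infty}\; \mathrm{pCap}(A),
\]
by Theorem~\ref{thm.defpcap}. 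The task therefore reduces to proving existence of the individual limits $e_A(a)$, and I would do this by mimicking the localisation strategy behind Theorem~\ref{thm.defpcap}, now carrying through the extra randomness coming from the simple random walk on $\mathcal C(z)$.

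Write $Q(z) := \mathbb P(H_A(z) = a,\, z\longleftrightarrow A) = \mathbb E\!\left[\mathbf 1_{\{z\longleftrightarrow A\}}\, h_{\mathcal C(z)}(z,a)\right]$, where, for a finite connected graph $C$ containing $z$ and meeting $A$, $h_C(z,a)$ denotes the probability that a simple random walk on $C$ started at $z$ first hits $A$ at $a$. Fix a large radius $R$ with $A\subset B(0,R/2)$; for $\|z\|\ge 2R$, I would condition on the edges inside $B(0,R)$, which determine the connected component $\mathcal K$ of $A$ in the induced subgraph on $B(0,R)$ and the pioneer set $Y := \mathcal K \cap \partial B(0,R)$. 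Using the Markov property of the walk at its first entrance time $T_R$ to $B(0,R)$, and the fact that on $\{z\longleftrightarrow A\}$ the walk typically enters $\mathcal K$ at some $y\in Y$ before hitting $A$, one obtains a decomposition
\[
h_{\mathcal C(z)}(z,a) \;=\; \sum_{y\in Y} \mathbb P\!\left[S_{T_R}=y \,\big|\, \mathcal C(z)\right] g_{\mathcal K}(y,a) \;+\; \mathcal E,
\]
where $g_{\mathcal K}(y,a)$ is the hitting probability of $A$ at $a$ by a walk on $\mathcal K$ started at $y$, and $\mathcal E$ collects contributions from re-exits of $B(0,R)$ by the walk before absorption at $A$, and from the rare event that some other component of $\mathcal C(z)\cap B(0,R)$ is externally glued to $A$. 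Inserting this into $Q(z)$ and applying Theorem~\ref{thm.twosets} to the external connection events $\{z\longleftrightarrow y\}$ in $B(0,R)^c$ gives convergence of $Q(z)/\tau(z)$ as $\|z\|\to\infty$ up to an $R$-dependent error; a Cauchy argument in $R$ then identifies $e_A(a)$.

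The main obstacle will be controlling the error $\mathcal E$. Because $h_{\mathcal C(z)}(z,a)$ depends on Green-function-like ratios, even perturbations due to re-exits or parasitic connections between $z$ and $A$ via non-$\mathcal K$ components can \textit{a priori} contribute at order $\tau(z)$. The needed estimates combine: the pioneer large-deviation bound (Theorem~\ref{thm.LD}), which shows $|Y|$ is typically small and so limits the number of boundary excursions; the Kozma--Nachmias one-arm bound, which controls the probability that $\mathcal C(z)$ hits $\partial B(0,R)$ in two far-apart places; and the two-point asymptotic~\eqref{twopoint} together with the tree-graph inequality, which give polynomial-in-$R$ cost for each extra re-entry. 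Together these force $\mathcal E$ to be subleading in $\tau(z)$ uniformly in $\|z\|$ once $R\to\infty$ is taken in the final step.
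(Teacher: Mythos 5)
Your reduction of the second identity to the first is correct and is exactly what the paper does. For the existence of the individual limits, however, your proposal takes a genuinely different route from the paper, and it contains a gap that I do not think can be patched within your framework.

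The problem is with your error term $\mathcal E$. You decompose $h_{\mathcal C(z)}(z,a)$ according to the \emph{first} entry of the walk into $B(0,R)$, approximating the subsequent hitting distribution by $g_{\mathcal K}(y,a)$, the hitting law of a walk confined to the component $\mathcal K$ of $A$ inside $B(0,R)$. But a walk on $\mathcal C(z)$ started at a pioneer $y\in \partial B(0,R)$ exits $B(0,R)$ again with probability bounded below (at least of order $1/(2d)$ if $y$ has an open outward edge, which is typical for a pioneer), and after such an excursion it may re-enter at a \emph{different} pioneer $y'$. The eventual hitting location of $A$ is therefore governed by the last excursion into $\mathcal K$ that reaches $A$, not the first, and the distribution over the re-entry pioneer depends on the whole structure of $\mathcal C(z)$ outside $B(0,R)$. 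This is a $\Theta(1)$ effect: it does not become negligible as $\|z\|\to\infty$, nor does taking $R\to\infty$ help, since the same issue reappears at every scale. The tools you invoke (Theorem~\ref{thm.LD}, the one-arm bound, the two-point asymptotic) control the geometry of the cluster but not the invariant behaviour of the walk under repeated boundary crossings, so they cannot force $\mathcal E$ to be subleading.

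The paper sidesteps this entirely by reusing the last-pivotal-edge decomposition of Theorem~\ref{thm.defpcap} and adding one observation: on $\mathcal P_{z,A}(e)$ with $e=(x,y)$ oriented so that $x$ stays connected to $A$ when $e$ is closed, the law of $H_A(z)$ coincides with the law of the first hitting point of $A$ by a walk on $\mathcal C^e(x)$ started at $x$. The reason is a memorylessness argument: the walk from $z$ must pass through $x$ before hitting $A$; any excursion from $x$ across $e$ that avoids $A$ returns to $x$ (there is no other route to $A$), so such excursions simply restart the walk at $x$ and cannot bias the hitting distribution. Consequently $H_A(z)$ becomes a \emph{local} functional of $\mathcal C^e(x)$, and the localisation and IIC-convergence steps of Lemma~\ref{lem:pCap3} carry through verbatim with the events $\mathcal P_{z,A}(e,a)=\mathcal P_{z,A}(e)\cap\{H_A(z)=a\}$ and $\mathcal P_{\infty,A}(\vec e,a)$ in place of $\mathcal P_{z,A}(e)$ and $\mathcal P_{\infty,A}(\vec e)$. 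That observation is the ingredient your proposal is missing; without it, conditioning on a box and tracking boundary re-entries cannot produce a convergent decomposition.
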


	\begin{remark}\label{remark.eqmeasure}
		\rm{Using reversibility of the random walk, our equilibrium measure may also be expressed in terms of the IIC. More precisely, given $a\in A$, consider $e_\infty(a,A)$ the first pivotal edge of $\mathcal C_\infty(a)$, for the event that $a$ is connected to infinity. In other words $e_\infty(a,A)$ is an edge of $\mathcal C_\infty(a)$, such that closing it disconnects $a$ from infinity, and no other edge of the (finite) connected component of $a$ in $\mathcal C_\infty(a) \setminus e_\infty(a,A)$ has this property.  
			Then the reversibility of the random walk shows that $e_A(a)$ is also equal to the probability that a random walk starting from $a$ restricted to $\mathcal C_\infty(a)$ hits first one endpoint of $e_\infty(a,A)$ before returning to $A$, see also Remark~\ref{rem.reversibility}. }
	\end{remark}
	
	\begin{remark} \label{rem.eqmes.ordering}
		\rm{Another natural example of equilibrium measure is the following. Consider an ordering of the elements of $A$, say $A=\{a_1,\dots,a_k\}$. 
			Then 
			$$e_A(a_i) = \mathbb P(\mathcal C_\infty(a_i) \cap \{a_1,\dots,a_{i-1}\} =\emptyset),$$ is such that $\textrm{pCap}(A) = \sum_{i=1}^k e_A(a_i)$, and is obtained as the limit 
			$$e_A(a_i) = \lim_{\|z\|\to \infty} \frac{\mathbb P(a_i \in \mathcal C(z)\textrm{ and } a_j\notin \mathcal C(z), \textrm{ for all }j<i)}{\tau(z)}. $$  
			Actually, this representation may also be used to provide a proof of the existence of the limit in Theorem~\ref{thm.defpcap}, which would be slightly shorter than the one we present in Section~\ref{sec.proofdefcap}, but we feel that the proof we give there in terms of last pivotal edges is also interesting on its own.}
	\end{remark}

	\textbf{Organisation}:  In Section~\ref{sec:prelim} we recall known results in percolation and give an overview of the proof ideas. 
	In Section~\ref{sec.proofdefcap} we present the proof of Theorem~\ref{thm.defpcap} on the existence of the p-capacity of a set and we also give the proof of Proposition~\ref{prop:eqmeasure}. Then in Section~\ref{sec.LD} we present a general framework and state a general large deviations result which implies Theorem~\ref{thm.LD} as a particular case. In Section~\ref{sec.regpoints} we define our new notion of regular point and show using our large deviations result and essentially the same exploration argument as in~\cite{KN11} that most pioneers are regular points with very high probability. In Section~\ref{sec:hitting} we give an alternative proof of the main ingredient in the proof of the one arm exponent from~\cite{KN11} and we also present the proof of Theorem~\ref{thm.IIC}. In Section~\ref{sec:pCapball} we prove Theorem~\ref{thm.pcap.ball} and finally in Section~\ref{sec:connectingdistant} we give the proofs of Theorems~\ref{thm.twosets} and~\ref{thm.onearm}.

	\textbf{Notation}: Given two functions $f$ and $g$, we write $f\lesssim g$, if there exists a constant $C>0$, such that $f(x) \le Cg(x)$, for all $x$, and $f\asymp g$, if $f\lesssim g$ and $g\lesssim f$, and $f\sim g$, if the ratio $f(x)/g(x)$ tends to one at infinity.

	
		\section{Preliminaries and overview}
 \label{sec:prelim}

	An event is increasing if it is stable under the operation of  opening new edges. 
	We recall the Harris/FKG inequality (see Theorem (2.4) in~\cite{Gri}) which asserts that for any two increasing events $E$ and $F$ we have 
	\begin{equation}\label{FKGineq}
		\pr{E\cap F}\geq \pr{E}\cdot \pr{F}.
	\end{equation}
	A subgraph $A$ is said to witness an increasing event $E$, if 
	whenever all edges of the graph $A$ are open, the event $E$ holds. 
	Given two increasing events $E$ and $F$, we say that they occur disjointly and write $E\circ F$ if there exist two disjoint subgraphs $A$ and $B$ that witness $E$ and $F$ respectively and whose edges are all open. The BK inequality asserts that for any two increasing events $E$ and~$F$, one has (see Theorem (2.12) in~\cite{Gri}) 
	\begin{equation}\label{BKineq}
		\mathbb P(E\circ F) \le \mathbb P(E)\cdot \mathbb P(F).
	\end{equation}
	
	We also recall the following result due to Kozma and Nachmias~\cite{KN11} on the one-arm estimate in critical percolation. 
	\begin{theorem}\label{thm:onearm}\rm{(\cite[Theorem~1]{KN11})}
		Let $d>6$. For the nearest neighbour model for which the two point function satisfies~\eqref{twopoint} and for the sufficiently spread out percolation model we have 
		\begin{equation}\label{onearm}
			\mathbb P(0\longleftrightarrow \partial B(0,r)) \asymp \frac 1{r^2}. 
		\end{equation}
	\end{theorem}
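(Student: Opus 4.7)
My plan is to treat the two bounds separately, with the lower bound coming from a standard second moment argument and the upper bound carrying the bulk of the difficulty through a regular-pioneer bootstrap.

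For the lower bound, I would work with $N_r := |\mathcal{C}(0) \cap \partial B(0,r)|$. By~\eqref{twopoint} together with $|\partial B(0,r)| \asymp r^{d-1}$,
\[
\mathbb{E}[N_r] = \sum_{x \in \partial B(0,r)} \tau(x) \asymp r.
\]
For the second moment I would apply the Aizenman--Newman tree-graph inequality
\[
\mathbb{P}(0 \longleftrightarrow x,\, 0 \longleftrightarrow y) \;\leq\; \sum_{u\in\mathbb{Z}^d} \tau(u)\tau(x-u)\tau(y-u),
\]
and then sum over $x,y\in\partial B(0,r)$, splitting the $u$-sum over $\{\|u\|\leq r\}$ and $\{\|u\|>r\}$. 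Using~\eqref{twopoint} on each piece together with the finiteness of the triangle diagram for $d>6$ (which holds in our setting), a short calculation yields $\mathbb{E}[N_r^2] \lesssim r^4$. The Paley--Zygmund inequality then gives
\[
\mathbb{P}(0 \longleftrightarrow \partial B(0,r)) \;\geq\; \frac{(\mathbb{E}[N_r])^2}{\mathbb{E}[N_r^2]} \;\gtrsim\; \frac{1}{r^2}.
\]

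For the upper bound, I would run a bootstrap along dyadic scales using the regular-pioneer machinery of Sections~\ref{sec.LD} and~\ref{sec.regpoints}. Write $f(r) = \mathbb{P}(0 \longleftrightarrow \partial B(0,r))$ and let $\mathcal{R}_r$ be the subset of $\mathcal{C}_r(0)\cap \partial B(0,r)$ consisting of regular pioneers, i.e.\ those with controlled local density of $\mathcal{C}_r(0)$ in a small patch $Q_s(x)$ around them. The large deviations estimate Theorem~\ref{thm.LD} allows me to ensure that, on the event $A_r := \{0\longleftrightarrow \partial B(0,r)\}$, the regular pioneers dominate the pioneer count with high conditional probability, so that $\mathbb{E}[|\mathcal{R}_r|\,;\,A_r] \asymp \mathbb{E}[N_r] \asymp r$. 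Conversely, each $x\in\mathcal{R}_r$ has, by the regularity property and a decoupling argument in the spirit of the proof of Theorem~\ref{thm.pcap.ball}, a conditional probability at least $c\,f(r)$ to extend to $\partial B(0,2r)$ through an open path disjoint from $\mathcal{C}_r(0)$. Exploiting BK-type disjoint-occurrence bounds to approximately factorize the joint extension events yields an inequality of the form
\[
f(2r) \;\gtrsim\; \frac{\bigl(\mathbb{E}[|\mathcal{R}_r|\,;\,A_{2r}]\bigr)^{2}}{\mathbb{E}[|\mathcal{R}_r|^2\,;\,A_{2r}]},
\]
which, combined with the lower bound $\mathbb{E}[|\mathcal{R}_r|\,;\,A_{2r}]\gtrsim r\,f(r)$, forces $r^2 f(r) \lesssim 1$ upon iteration.

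The main obstacle is to control the second moment $\mathbb{E}[|\mathcal{R}_r|^2\,;\,A_{2r}]$ along the bootstrap, which is essentially a two-point function estimate for two cluster points on $\partial B(0,r)$ inside the half-space complement of $B(0,r)$. This is where the half-space bounds of~\cite{CH20,CHS23} refined by~\cite{Romain} enter crucially. Once this ingredient is in place, the purely geometric definition of regular pioneer in Section~\ref{sec.regpoints}, together with Theorem~\ref{thm.LD}, makes the bookkeeping substantially cleaner than in~\cite{KN11} and should deliver the upper bound $f(r)\lesssim 1/r^2$ for the regime covered in Section~\ref{sec:newproof}.
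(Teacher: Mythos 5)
The paper does not prove this statement: Theorem~\ref{thm:onearm} is cited from~\cite{KN11}, and the paper only re-derives one ingredient, namely Theorem~2 of~\cite{KN11} (stated here as Theorem~\ref{thm:3}), in Section~\ref{sec:newproof}, and that only for $d\geq 8$; the assembly of this ingredient into~\eqref{onearm} is delegated entirely to~\cite{KN11}. So there is no ``paper's proof'' to compare against, and your proposal must stand on its own.

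Your lower bound is correct and is the standard one: $\E{N_r}\asymp r$ from~\eqref{twopoint}, $\E{N_r^2}\lesssim r^4$ from the tree-graph inequality, and Paley--Zygmund yields $\pr{0\longleftrightarrow \partial B(0,r)}\geq (\E{N_r})^2/\E{N_r^2}\gtrsim r^{-2}$.

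Your upper bound, however, has a genuine gap, and it is structural rather than one of missing details. The central inequality you write,
\[
f(2r)\;\gtrsim\; \frac{\bigl(\E{|\mathcal{R}_r|\,;\,A_{2r}}\bigr)^{2}}{\E{|\mathcal{R}_r|^2\,;\,A_{2r}}},
\]
is just Paley--Zygmund applied to $|\mathcal{R}_r|\1_{A_{2r}}$. Since $|\mathcal R_r|>0$ already implies $A_{2r}$, Cauchy--Schwarz gives $\E{|\mathcal R_r|^2;A_{2r}}\geq (\E{|\mathcal R_r|;A_{2r}})^2/f(2r)$, so the displayed bound reduces to the tautology $f(2r)\gtrsim f(2r)$. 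No iteration of such inequalities can possibly force $r^2 f(r)\lesssim 1$: second-moment/Paley--Zygmund machinery only produces lower bounds on hitting probabilities, and you have been using nothing else. The decisive input that converts the first- and second-moment information into an upper bound on $f$ is absent from your sketch. Concretely, the Kozma--Nachmias proof of the upper bound combines Theorem~\ref{thm:3} (many pioneers at scale $r$ force $\gtrsim r^4$ cluster vertices in the annulus, except on an event of probability at most $(1-c)f(r)$) with the Aizenman--Newman volume tail bound $\pr{|\cC(0)|\geq n}\lesssim n^{-1/2}$; a short bootstrap over dyadic scales then caps $r^2 f(r)$. Without the volume tail bound (or an equivalent input giving an a priori upper bound on some conditioned quantity), your scheme cannot close.
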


\textbf{Overview:} One of the estimates that appears in many proofs is a lower bound on the probability that a sufficiently far away point $z$ is connected to $\cC_r(0)$, where $\cC_r(0)$ is the cluster of $0$ created by open bonds lying  inside the box~$\ball{0}{r}$. In order for this to hold, $z$ must be connected to $\cC_r(0)\cap \partial \ball{0}{r}$, the set of the so-called \textit{pioneers}. We can further restrict to ``regular pioneers", $\Reg{\cC_r(0)}$. These are the pioneers for which (i) the density of vertices of $\cC_r(0)$ in large balls around them is typical, and (ii) the density of other pioneers is also typical. This latter condition requires a control on the surface large deviations of pioneers. This notion of regularity first appeared in the breakthrough work of Kozma and Nachmias~\cite{KN11} and it also appeared in subsequent works, such as~\cite{CH20, CHS23, CCHS25, sapoz}. 	It involved an intricate conditioning which we have now bypassed. Moreover, the requirement of a surface low density of pioneers is new. 
	
	Let us now outline the estimate on $\pr{z\longleftrightarrow \Reg{\cC_r(0)}}$. Typically, when $\cC_r(0)$ intersects $\partial \ball{0}{r}$, then $|\Reg{\cC_r(0)}|\asymp r^2$. In such a case, we call the cluster $\cC_r(0)$ good. We want to show that 
	\begin{align*}
		\prcond{z\longleftrightarrow  \Reg{\cC_r(0)}}{\cC_r(0) \text{ is good}}{} \gtrsim \pr{z\longleftrightarrow 0} \cdot \econd{|\Reg{\cC_r(0)}|}{\cC_r(0) \text{ is good}}{} \asymp \tau(z) \cdot r^{2}.
	\end{align*} 
Summing over all possible clusters we get 
	\begin{align}\label{eq:firststeph}
	\begin{split}
	\pr{z\longleftrightarrow \Reg{\cC_r(0)}, \cC_r(0) \text{ is good}}&= \sum_{H \text{ good}} \pr{\cC_r(0)=H} \prcond{z\longleftrightarrow \Reg{H}}{\cC_r(0)=H}{} \\
		&\geq \sum_{H\text{ good}} \pr{\cC_r(0)=H} \prcond{z\stackrel{\text{off } H}\longleftrightarrow \Reg{H}}{\cC_r(0)=H}{} 
	\\&=\sum_{H \text{ good}}\pr{\cC_r(0)=H} \pr{z\stackrel{\text{off } H}\longleftrightarrow \Reg{H}},
	\end{split}
	\end{align}
	where we write $\{x\stackrel{\text{off } A}\longleftrightarrow y\}$ for the event that $x$ and $y$ are connected via an open path of edges not using any vertices in $A$. Note that above we were able to decorrelate the events and get the last equality because the two events under consideration depend on disjoint sets of edges. This idea of decorrelation has already appeared extensively in previous works, in particular in~\cite{KN11}. 
	One then writes
	\begin{align}\label{eq:firstequationzreg}
	\pr{z\stackrel{\text{off } H}\longleftrightarrow \Reg{H}} = \pr{z\longleftrightarrow \Reg{H}} - \pr{z\stackrel{\text{via } H}\longleftrightarrow \Reg{H}},
	\end{align}
	where we write $\{x\stackrel{\text{via } A}\longleftrightarrow y\}$ for the event that every open path connecting $x$ and $y$ uses at least one vertex in~$A$. 
	To deal with the first term in the difference above we use that the p-capacity is lower bounded by the $(d-4)$-capacity. This lower bound is  both simple and quantitative. More precisely, we prove, using an idea of Fitzsimmons and Salisbury~\cite{FitzSal} in the setting of hitting times for Markov chains, that for $\|z\|\geq 2\diam{A}$, 
	\[
	\pr{z\longleftrightarrow A}\gtrsim \tau(z)\cdot  \cpc{d-4}{A}.
	\]
	In case the set $A$ is ``sparse'', then its $(d-4)$-capacity is shown to be of order its volume (see Claim~\ref{cl:lowerboundoncapacity}). This follows from the variational characterisation~\eqref{var.C4}. Note that for sparse sets the above lower bound matches, up to constant, the upper bound which follows from a union bound. Our regularity condition ensures that when $H$ is good, the set $\Reg{H}$ is sparse. Therefore, 
	\[
	\pr{z\longleftrightarrow \Reg{H}} \gtrsim \tau(z) \cdot \cpc{d-4}{\Reg{H}} \asymp \tau(z) \cdot r^2.
	\]
	It remains to show that 
	\[
	\pr{z\stackrel{\text{via } H}\longleftrightarrow \Reg{H}} \ll \tau(z) \cdot r^2.
	\]
	By first using a union bound and then the BK inequality one gets
	\begin{align}\label{eq:bigsumappear}
	\nonumber\pr{z\stackrel{\text{via } H}\longleftrightarrow \Reg{H}}\leq \sum_{x\in \Reg{H}}\pr{z\stackrel{\text{via } H}\longleftrightarrow x}\leq \sum_{x\in \Reg{H}}\sum_{w\in H} \tau(z,w) \tau(w,x) \\
	\asymp \tau(z) \sum_{\substack{x\in \Reg{H}\\ w\in H}} \tau(w,x),
	\end{align}
where for the last step we used that $\|z\|$ is large. To show the above sum is much smaller than $r^2$ one needs a control on the density of sites 
of the cluster in concentric annuli around regular points. The case of annuli with sufficiently large radii is handled using the definition of regularity, but the case of small radii requires a separate argument. Here we also bypass the original idea from~\cite{KN11} and replace it by a simpler approach. 
We introduce the notion of  \textit{line-good} points and instead of $\{z\longleftrightarrow \Reg{H}\}$ we consider the event that $z$ is connected to the set of line-good points.

	\section{Existence of p-capacity} 
	\label{sec.proofdefcap}
	In this section we prove Theorem~\ref{thm.defpcap} and Proposition~\ref{prop:eqmeasure}. 
	
	Let us start with the proof of Theorem~\ref{thm.defpcap}. Let $A$ be a finite subset of $\mathbb Z^d$, and let $z\in \mathbb Z^d\setminus A$. For any edge $e$ of the graph under consideration (either the nearest neighbor graph if $d>10$ or a sufficiently spread out graph if $d>6$), we consider the event 
	$$\mathcal P_{z,A}(e) = \{z\longleftrightarrow A\} \cap \{e \textrm{ is the last pivotal edge for the event that $z$ is connected to $A$}\}. $$  
	In other words, $\mathcal P_{z,A}(e)$ is the event that $z$ is connected to $A$, the edge $e$ is open and part of the cluster of $z$, but if one closes it, then $z$ becomes disconnected from $A$, and furthermore, for any other edge $f$ with this property, closing $f$ also disconnects $e$ from $z$. 
	Now we let 
	$$\mathcal P_{z,A}= \bigcup_{e} \mathcal P_{z,A}(e),$$
	where the union is over all the edges of the graph (note that by definition, one can always restrict the union to edges having at least one endpoint which is not in $A$. Hence $\mathcal P_{z,A}$ is the event that there is at least one pivotal edge for the event that $z$ is connected to $A$. 
	The first step in the proof is the following lemma. 
	\begin{lemma}\label{lem:zapivotal}
		One has 
		$$\lim_{\|z\|\to \infty} \frac{\mathbb P\big(\{z\longleftrightarrow A\} \cap \mathcal P_{z,A}^c \big) }{\tau(z)} = 0. $$
	\end{lemma}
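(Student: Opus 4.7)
The plan is to observe that $\{z\longleftrightarrow A\}\cap \mathcal P_{z,A}^c$ is contained in the disjoint-occurrence event $\{z\longleftrightarrow A\}\circ \{z\longleftrightarrow A\}$, and then to apply BK together with the two-point function asymptotic~\eqref{twopoint}.

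First I would verify the inclusion. On the event $\{z\longleftrightarrow A\}$, the cluster $\mathcal C(z)$ is almost surely finite (since we work at criticality in the models covered by~\eqref{twopoint}), so the set of pivotal edges for $\{z\longleftrightarrow A\}$ is finite. If this set were nonempty, its elements would all be bridges in $\mathcal C(z)$ separating $z$ from $A$, hence they could be totally ordered along any open path from $z$ to $A$ inside $\mathcal C(z)$. The pivotal edge $e^*$ closest to $A$ in this ordering would then satisfy the definition of ``last pivotal edge'': closing any other pivotal edge $f$ disconnects $e^*$ from $z$, since $f$ lies strictly between $z$ and $e^*$ on every open path. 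Thus $e^*$ would witness $\mathcal P_{z,A}(e^*)$, contradicting $\mathcal P_{z,A}^c$. Hence on $\{z\longleftrightarrow A\}\cap \mathcal P_{z,A}^c$ there is no pivotal edge at all, and by the edge version of Menger's theorem applied to the random graph of open edges, this is equivalent to the existence of two edge-disjoint open paths from $z$ to $A$, i.e.\ to the occurrence of $\{z\longleftrightarrow A\}\circ \{z\longleftrightarrow A\}$.

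Second, I would apply the BK inequality~\eqref{BKineq} and a union bound to obtain
$$\mathbb P\big(\{z\longleftrightarrow A\}\cap \mathcal P_{z,A}^c\big)\le \mathbb P\big(\{z\longleftrightarrow A\}\circ \{z\longleftrightarrow A\}\big)\le \mathbb P(z\longleftrightarrow A)^2 \le \Big(\sum_{a\in A}\tau(z-a)\Big)^2.$$
By~\eqref{twopoint} and the finiteness of $A$, one has $\tau(z-a)\lesssim \tau(z)$ uniformly in $a\in A$ once $\|z\|$ exceeds a constant depending only on $A$. Hence the right-hand side is at most $C|A|^2\tau(z)^2$, and dividing by $\tau(z)$ yields
$$\frac{\mathbb P\big(\{z\longleftrightarrow A\}\cap \mathcal P_{z,A}^c\big)}{\tau(z)} \lesssim |A|^2 \tau(z) \longrightarrow 0 \qquad \text{as } \|z\|\to\infty.$$

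The only point requiring care is the first step: the identification of $\{z\longleftrightarrow A\}\cap \mathcal P_{z,A}^c$ with the BK-square of $\{z\longleftrightarrow A\}$ is a purely deterministic graph-theoretic fact (Menger's theorem for edge connectivity plus the remark that any nonempty finite chain of bridges admits a ``last'' element), and the slight subtlety lies in ruling out pathologies coming from cycles in $\mathcal C(z)$. Once this inclusion is pinned down, BK and~\eqref{twopoint} give the bound with room to spare, since the excess factor $\tau(z)=o(1)$.
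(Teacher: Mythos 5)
Your proof is correct and follows the same route as the paper: contain the event in the BK-square $\{z\longleftrightarrow A\}\circ\{z\longleftrightarrow A\}$, apply BK and a union bound, and invoke $\tau(z)\to 0$. The paper's proof is a terse version of the same argument (it simply asserts that no pivotal edge forces two disjoint open paths); you have merely spelled out the elementary graph-theoretic justification — that a nonempty set of pivotal edges always admits a last one, so $\mathcal P_{z,A}^c\cap\{z\longleftrightarrow A\}$ indeed means no pivotal edge at all — which the paper leaves implicit.
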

	\begin{proof}[\bf Proof]
		The proof follows from the observation that if $z$ is connected to $A$, but there is no pivotal edge, then there must exist two disjoint open paths from $z$ to $A$. Hence, using the BK inequality~\eqref{BKineq}, we get 
		$$\mathbb P\big(\{z\longleftrightarrow A\} \cap \mathcal P_{z,A}^c \big) \le \mathbb P(\{z\longleftrightarrow A\} \circ \{z\longleftrightarrow A\}) 
		\le \mathbb P(z\longleftrightarrow A)^2 \le \big(\sum_{x\in A} \tau(x,z)\big)^2 \le C |A|^2 \cdot \tau(z)^2,$$
		for some constant $C>0$. The result follows using that $\lim_{\|z\|\to \infty} \tau(z)= 0$, e.g. by~\eqref{twopoint}. 
	\end{proof}
	The next step is given by the following lemma. For an edge $e=\{x,y\}$ we let $\|e\| = \max(\|x\|,\|y\|)$.
	\begin{lemma}\label{lem:pCap2}
		Let $A$ a be finite subset of $\mathbb Z^d$. There exists a constant $C>0$, such that for any $K\ge 1$ and any $z\in \mathbb Z^d$, with $\| z\|\ge K$,  
		$$\frac 1{\tau(z)} \sum_{e:\|e\|\ge K} \mathbb P(\mathcal P_{z,A}(e)) \le \frac{C}{K^{d-4}}. $$ 
	\end{lemma}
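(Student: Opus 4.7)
The plan is to decompose the event $\mathcal{P}_{z,A}(e)$ via the BK inequality twice, then reduce the claim to a standard convolution estimate. Write $e=\{x,y\}$ with $x$ on the $z$-side of $e$. Since $e$ is the last pivotal edge, the cluster of $y$ in the configuration with $e$ removed is disjoint from the cluster of $z$ in that same configuration, which contains an open path from $z$ to $x$; moreover, inside the former cluster, $y$ is connected to $A$ with no pivotal edge. Hence
\[
\mathcal{P}_{z,A}(e)\subseteq \{e\text{ open}\}\circ\{z\longleftrightarrow x\}\circ\{y\longleftrightarrow A,\ \text{no pivotal}\}.
\]
For $y\notin A$, absence of pivotal edges between $y$ and $A$ implies the existence of two edge-disjoint open paths from $y$ to $A$, so a second application of BK yields
\[
\mathbb{P}(y\longleftrightarrow A,\ \text{no pivotal})\le \Bigl(\sum_{a\in A}\tau(y-a)\Bigr)^{2}\lesssim |A|^{2}(1+\|y\|)^{4-2d},
\]
where in the last step we use~\eqref{twopoint} together with the fact that, for $K$ larger than some constant $K_{0}(A)$, the condition $\|y\|\ge K$ forces $\|y-a\|\asymp\|y\|$ uniformly in $a\in A$. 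For such $K$ (chosen also larger than the range of the graph plus $\max_{a\in A}\|a\|$), the case $y\in A$ does not occur; for $K<K_{0}$ the bound is absorbed in the constant. Combining with $\mathbb{P}(e\text{ open})=p_{c}$ and $\tau(z,x)\lesssim(1+\|z-x\|)^{2-d}$, this gives $\mathbb{P}(\mathcal{P}_{z,A}(e))\lesssim \tau(z-x)\,(1+\|y\|)^{4-2d}$.

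Because the graph has bounded range, $\|x\|\asymp\|y\|\asymp\|e\|$ and the number of edges incident to any vertex is uniformly bounded, so summing over edges with $\|e\|\ge K$ reduces the problem to showing
\[
S(z,K):=\sum_{w:\|w\|\ge K/2}(1+\|z-w\|)^{2-d}(1+\|w\|)^{4-2d}\lesssim (1+\|z\|)^{2-d}\cdot K^{4-d}
\]
whenever $\|z\|\ge K$. We split into three regions. (i) $K/2\le\|w\|\le\|z\|/2$: here $\|z-w\|\asymp\|z\|$, and since $d>6$ we have $2d-4>d$, so the tail sum $\sum_{\|w\|\ge K/2}(1+\|w\|)^{4-2d}\asymp K^{4-d}$, giving a contribution $\lesssim \|z\|^{2-d}K^{4-d}$. (ii) $\|z-w\|\le\|z\|/2$: here $\|w\|\asymp\|z\|$, and $\sum_{w}(1+\|z-w\|)^{2-d}$ over the ball of radius $\|z\|/2$ is $\asymp\|z\|^{2}$, giving a contribution of order $\|z\|^{6-2d}=\|z\|^{2-d}\cdot\|z\|^{4-d}\le\|z\|^{2-d}K^{4-d}$. (iii) Both $\|w\|\ge\|z\|/2$ and $\|z-w\|\ge\|z\|/2$: here $(1+\|z-w\|)^{2-d}\lesssim\|z\|^{2-d}$, and the tail sum of $(1+\|w\|)^{4-2d}$ is $\lesssim\|z\|^{4-d}$, yielding the same bound.

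Adding these three contributions proves the convolution estimate, hence the lemma. The argument is essentially bookkeeping: the only inputs are the BK inequality~\eqref{BKineq}, the two-point function asymptotic~\eqref{twopoint}, and the case analysis in the convolution bound. The main point to watch is that the absence of pivotal edges is not itself monotone, but is contained in the increasing event of two disjoint connections, which is the step that makes the double application of BK legitimate; otherwise no essential difficulty arises.
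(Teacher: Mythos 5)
Your proof is correct and follows essentially the same route as the paper: after reducing to $K$ large (same observation), you bound $\mathbb P(\mathcal P_{z,A}(e))$ via the BK inequality by $\tau(z-x)\cdot\tau(y)^{2}\cdot|A|^{2}$ (with $x$ the $z$-side endpoint, implicitly summing over both orientations), and then carry out the identical convolution estimate $\sum_{\|w\|\ge K/2}\|z-w\|^{2-d}\|w\|^{4-2d}\lesssim\|z\|^{2-d}K^{4-d}$. The only cosmetic difference is that the paper writes the inclusion in one step as $\mathcal P_{z,A}(e)\subseteq(\{x\leftrightarrow A\}\circ\{x\leftrightarrow A\}\circ\{y\leftrightarrow z\})\cup(\{y\leftrightarrow A\}\circ\{y\leftrightarrow A\}\circ\{x\leftrightarrow z\})$, whereas you make the intermediate event $\{y\longleftrightarrow A,\text{no pivotal}\}$ explicit and invoke Menger to get the double connection — this is the same content, spelled out slightly more.
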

	\begin{proof}[\bf Proof]
		Note that it suffices to prove the result for $K$ large enough, since for any edge $e$, one has $\mathbb P(\mathcal P_{z,A}(e))\le \mathbb P(z\longleftrightarrow A) \le c|A|\tau(z)$, for some constant $c>0$ (independent of $e$ and $z$). In particular one can assume that both endpoints of the edges under consideration are at distance at least $K/2$ from the origin. Now observe that if $e=\{x,y\}$ is such an edge, 
		then one has
		$$\mathcal P_{z,A}(e) \subseteq \big(\{x\longleftrightarrow A\}\circ  \{x\longleftrightarrow A\} \circ \{y\longleftrightarrow z\}\big)   \cup 
		\big(\{y\longleftrightarrow A\}\circ  \{y\longleftrightarrow A\} \circ \{x\longleftrightarrow z\} \big), $$ 
		and using~\eqref{twopoint} we obtain that 
		$$\mathbb P(\mathcal P_{z,A}(e)) \lesssim  |A|^2 \cdot \tau(x)^2\cdot \tau(z-x) \lesssim  |A|^2 \cdot \frac 1{\|x\|^{2(d-2) }\cdot (1+\|z-x\|^{d-2})}. $$ 
		Consequently, using that $d\ge 5$, 
		$$\frac 1{\tau(z)} \sum_{e:\|e\|\ge K} \mathbb P(\mathcal P_{z,A}(e)) \lesssim |A|^2   \sum_{x:\|x\|\ge K/2} \frac {\|z\|^{d-2}}{\|x\|^{2(d-2)}\cdot (1+\|z-x\|^{d-2})} \lesssim  \frac{|A|^2}{K^{d-4}}$$
		and this finishes the proof. 
	\end{proof}
	The previous lemma ensures that one can deal with only a finite number of edges. It remains to see that for these edges, the probabilities of the events $\mathcal P_{z,A}(e)$ converge as we let $z$ go to infinity. The idea for this is to first localize these events, and then use that the conditional probabilities of local events converge, by construction of the IIC measures. To be more precise now, given an edge $e$ and $x\in \mathbb Z^d$, we let $\mathcal C^e(x)$ be the connected component of $x$ after one closes $e$, and similarly let $\mathcal C_\infty^e(x)$ be the connected component of $x$ after we first sample an IIC rooted at $x$, and then close the edge $e$. 
	Also, given an oriented edge $\vec{e}=(x,y)$, we define (with $e$ the unoriented edge $\{x,y\}$), 
	$$\mathcal P_{\infty,A}(\vec e)  = \{\mathcal C_\infty^e(y) \cap A= \emptyset\} \cap \{\mathcal C_\infty(y)\textrm{ contains two disjoint paths from $x$ to $A$}\}.$$ Finally given an unoriented edge $e=\{x,y\}$, we let $\vec e_1 = (x,y)$ and $\vec e_2=(y,x)$ denote the two associated oriented edges. One can now state our final lemma. 
	\begin{lemma}\label{lem:pCap3}
		Let $A$ be a finite subset of $\mathbb Z^d$. For any edge $e$, one has 
		$$\lim_{\|z\|\to \infty} \frac {\mathbb P(\mathcal P_{z,A}(e))}{\tau(z)} = \mathbb P(\mathcal P_{\infty,A}(\vec e_1)) + \mathbb P(\mathcal P_{\infty,A}(\vec e_2)). $$ 
	\end{lemma}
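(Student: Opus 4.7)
The plan is to split $\mathcal{P}_{z,A}(e)$ according to which endpoint of $e$ lies on the $A$-side once $e$ is closed. Because $e$ is pivotal on the event $\mathcal{P}_{z,A}(e)$, the two resulting sub-events are disjoint and I would treat them separately; by symmetry it suffices to focus on the orientation where $y$ is on the $A$-side and $x$ on the $z$-side, and aim to prove that this piece divided by $\tau(z)$ converges to $\mathbb{P}(\mathcal{P}_{\infty,A}(\vec{e}_2))$. The reason the limit matches the orientation $\vec{e}_2=(y,x)$ rather than $\vec{e}_1=(x,y)$ is that the IIC root in the limit is the endpoint of $e$ attached to $z$, namely $x$.

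Concretely, this sub-event factorises as
\begin{align*}
\{e\text{ open}\}\,\cap\,E_1(\mathcal{C}^e(y))\,\cap\,\bigl\{x\stackrel{\text{off }\mathcal{C}^e(y)}{\longleftrightarrow} z\bigr\},
\end{align*}
where $E_1(H)$ denotes the local condition $\{x\notin H,\ H\cap A\neq\emptyset,\ \text{two edge-disjoint open paths from }y\text{ to }A\text{ in }H\}$. Conditioning on $\mathcal{C}^e(y)=H$ determines all edges with at least one endpoint in $H$ (except $e$), closes all boundary edges of $H$, and leaves the status of $e$ and the configuration on edges entirely inside $H^c$ as independent Bernoulli$(p_c)$. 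This yields
\begin{align*}
\mathbb{P}(\mathcal{P}_{z,A}(\vec{e}_1)) = p_c\sum_H \mathbf{1}\{E_1(H)\}\,\mathbb{P}(\mathcal{C}^e(y)=H)\,\mathbb{P}\bigl(x\stackrel{\text{off }H}{\longleftrightarrow}z\bigr).
\end{align*}

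I would then send $\|z\|\to\infty$ inside the sum. For each fixed finite $H$ with $x\notin H$, the asymptotic~\eqref{twopoint} gives $\tau(z-x)\sim\tau(z)$, while the BK inequality bounds $\mathbb{P}(x\stackrel{\text{via }H}{\longleftrightarrow}z)\leq\sum_{h\in H}\tau(x-h)\tau(h-z)=o(\tau(z))$. Hence $\mathbb{P}(x\stackrel{\text{off }H}{\longleftrightarrow}z)/\tau(z)\to 1$ pointwise in $H$. Since this ratio is uniformly bounded by $\tau(z-x)/\tau(z)\leq C$ and $\sum_H\mathbf{1}\{E_1(H)\}\mathbb{P}(\mathcal{C}^e(y)=H)\leq 1$, dominated convergence gives
\begin{align*}
\lim_{\|z\|\to\infty}\frac{\mathbb{P}(\mathcal{P}_{z,A}(\vec{e}_1))}{\tau(z)} = p_c\,\mathbb{P}\bigl(E_1(\mathcal{C}^e(y))\bigr).
\end{align*}

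The last step, and the one I expect to be the main technical obstacle, is to identify the right-hand side with $\mathbb{P}(\mathcal{P}_{\infty,A}(\vec{e}_2))$. Using the IIC construction at $x$, this probability equals $\lim_{\|w\|\to\infty}\mathbb{P}(F^{\mathrm{fin}}\cap\{x\longleftrightarrow w\})/\tau(w)$, where $F^{\mathrm{fin}}$ is the finite-cluster version $\{\mathcal{C}^e(x)\cap A=\emptyset,\ \mathcal{C}(x)\text{ contains two disjoint paths from }y\text{ to }A\}$. The first clause forces $e$ open and $x\notin\mathcal{C}^e(y)$, and the two paths from $y$ to $A$ must avoid $e$ (otherwise $A$ would lie in $\mathcal{C}^e(x)$), so $F^{\mathrm{fin}}$ coincides with $\{e\text{ open}\}\cap E_1(\mathcal{C}^e(y))$. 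Repeating the conditioning-on-$\mathcal{C}^e(y)$ computation with $w$ in place of $z$ — and noting that the extra contribution from $\{w\in H\}$ vanishes since $\mathcal{C}^e(y)$ is a.s.\ finite at criticality — produces the same limit $p_c\,\mathbb{P}(E_1(\mathcal{C}^e(y)))$, completing the identification. Summing the contributions from both orientations of $e$ then yields the claim.
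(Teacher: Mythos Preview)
Your factorisation of the oriented sub-event is incorrect when $|A|>1$. Pivotality of $e$ for $\{z\longleftrightarrow A\}$, together with $z\in\mathcal C^e(x)$, forces $\mathcal C^e(x)\cap A=\emptyset$; your event $\{e\text{ open}\}\cap E_1(\mathcal C^e(y))\cap\{x\stackrel{\text{off }\mathcal C^e(y)}{\longleftrightarrow}z\}$ does not enforce this. For instance with $A=\{a_1,a_2\}$, if $y$ has two disjoint open paths to $a_1$ avoiding $e$, $e$ is open, and $x$ is connected off $e$ both to $z$ and to $a_2$, then your event holds but $e$ is not pivotal. The same omission breaks the claimed equality $F^{\mathrm{fin}}=\{e\text{ open}\}\cap E_1(\mathcal C^e(y))$: only the inclusion $\subseteq$ holds. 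If you restore the missing condition, the quantity to control after conditioning on $\mathcal C^e(y)=H$ becomes $\mathbb P\bigl(x\stackrel{H^c}{\longleftrightarrow}z,\ x\not\stackrel{H^c}{\longleftrightarrow}A\bigr)/\tau(z)$, whose limit is no longer $1$ (it is $1$ minus an IIC-type hitting probability of $A\setminus H$ from $x$), so your clean identification of the limit with $p_c\,\mathbb P(E_1(\mathcal C^e(y)))$ collapses.

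There is a second, independent gap in the identification step. You invoke the IIC limit formula for $F^{\mathrm{fin}}\cap\{x\longleftrightarrow w\}$, but $F^{\mathrm{fin}}$ is not a cylinder event (it constrains all of $\mathcal C^e(x)$), so the passage $\mathbb P(F^{\mathrm{fin}}\mid x\longleftrightarrow w)\to\mathbb P(\mathcal P_{\infty,A}(\vec e_2))$ requires justification you do not give. Moreover your explicit computation of this limit is wrong: the ``extra contribution from $\{w\in H\}$'' equals $\mathbb P\bigl(E_1(\mathcal C^e(y)),\,w\in\mathcal C^e(y)\bigr)/\tau(w)$, and since $\mathbb P(w\in\mathcal C^e(y))\asymp\tau(w)$ this ratio is bounded away from zero --- almost-sure finiteness of $\mathcal C^e(y)$ yields $\mathbb P(w\in\mathcal C^e(y))\to 0$ but not $o(\tau(w))$, and dominated convergence over $H$ does not apply because any dominating function must depend on $w$. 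The paper sidesteps both issues simultaneously by truncating the pivotal event to a ball $B(0,R)$, so that it becomes a genuine cylinder event intersected with $\{x\longleftrightarrow z\}$ (where the IIC construction applies directly), and then controls the $R$-truncation error via the one-arm estimate and BK before letting $R\to\infty$.
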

	\begin{proof}[\bf Proof]
		For $B\subseteq \mathbb Z^d$ and $u\in \mathbb Z^d$ we denote by $\mathcal C(u;B)$ the connected component of $u$ when we just consider percolation on $B$ (i.e. the set of points that can be reached from $x$ by an open path that remains entirely in $B$), and by $\mathcal C^e(u;B)$ the connected component of $u$ in $\mathcal C(u;B)$ after we close the edge $e$. Then given an oriented edge $\vec e=(x,y)$, and $R>\|e\|$, we let 
		\begin{align*}
			\mathcal P_{z,A}^R(\vec e) & = \{\mathcal C^e(x) \subseteq B(0,R) \textrm{ and } \mathcal C^e(x) \textrm{ contains two disjoint paths from }x \textrm{ to } A\} \\
			& \quad \cap \{\mathcal C^e(y;{B(0,R)})\cap A = \emptyset\}\cap \{x\longleftrightarrow z\}. 
		\end{align*}
		We also define 
		\begin{equation*}
			\mathcal P_{\infty,A}^R(\vec e) = \left\{ \begin{array}{cc}
				\mathcal C_\infty^e(x) \textrm{ contains two disjoint paths from }x \textrm{ to } A, \ \mathcal C_\infty^e(x) \subseteq B(0,R), \\
				\mathcal C_\infty(x) \textrm{ does not contain any path from $y$ to $A$ that avoids $e$ and remains in }B(0,R)
			\end{array}
			\right\}. 
		\end{equation*}
		For any $R>\|e\|$, we note that the event $\mathcal P_{z,A}^R(\vec e)$ is the intersection of one event that only depends on the state of the edges inside $B(0,R)$ and the event that $x$ is connected to $z$ by an open path. 
		Hence, the existence of the IIC measure implies that 
		\begin{equation*}
			\lim_{\|z\|\to \infty} \frac{\mathbb P( \mathcal P_{z,A}^R(\vec e))}{\tau(z)} = \mathbb P\big( \mathcal P_{\infty,A}^R(\vec e)  \big). 
		\end{equation*} 
		Given an unoriented edge $e$, we let 
		$$\mathcal P_{z,A}^R(e)= \mathcal P_{z,A}^R(\vec e_1) \cup \mathcal P_{z,A}^R(\vec e_2), $$
		where we recall that $\vec e_1$ and $\vec e_2$ are the two oriented edges associated to $e$.  
		Note that the two events on the right hand side above are disjoint when $\|z\|>R$. Thus we also have 
		\begin{align}\label{eq:directededge}
			\lim_{\|z\|\to \infty} \frac{\mathbb P( \mathcal P_{z,A}^R(e))}{\tau(z)} = \mathbb P\big( \mathcal P_{\infty,A}^R(\vec e_1)  \big)+\mathbb P\big( \mathcal P_{\infty,A}^R(\vec e_2)  \big).
		\end{align}
		Next using that $\cC_\infty(x)$ is almost surely one-ended~\cite[Theorem~1.3(ii)]{vdHJ}, we see that $ \mathcal P_{\infty,A}(\vec e)$ is the increasing union over $R$ of the events $ \mathcal P_{\infty,A}^R(\vec e)$. Therefore, one has for any oriented edge $\vec e$, 
		\begin{align}\label{eq:limitR}
			\lim_{R\uparrow \infty} \mathbb P(\mathcal P_{\infty,A}^R(\vec e)) = \mathbb P(\mathcal P_{\infty,A}(\vec e)). 
		\end{align}
		To finish the proof of the lemma it suffices to show that
		\begin{align}\label{eq:goallimit0}
			\lim_{R\to\infty}	\lim_{\|z\|\to\infty}	\frac{\pr{\cP_{z,A}(e)\cap \cP_{z,A}^R(e)^c}}{\tau(z)} = 0 \quad \text{ and } \quad \lim_{R\to\infty}\lim_{\|z\|\to\infty}  \frac{\pr{\cP_{z,A}(e)^c\cap \cP_{z,A}^R(e)}}{\tau(z)} = 0.
		\end{align}
		Indeed, once this is proved then we the proof can be concluded by writing 
		\begin{align*}
			\lim_{\|z\|\to\infty}\frac{	\pr{\cP_{z,A}(e)}}{\tau(z)} =\lim_{R\to\infty}\lim_{\|z\|\to\infty} \frac{	\pr{\cP^R_{z,A}(e)}}{\tau(z)} 
		\end{align*}
		and using also~\eqref{eq:directededge} and~\eqref{eq:limitR}. We now prove~\eqref{eq:goallimit0}. Let $R>2\|e\|^2$. Then 
		we observe that 
		$$
		\mathcal P_{z,A}(e)\cap \mathcal P_{z,A}^R(e)^c  \subseteq \Big(\{x\longleftrightarrow \partial B(0,R) \} \circ \{y\longleftrightarrow z\}\Big)\cup \Big(\{y\longleftrightarrow \partial B(0,R)\} \circ \{x\longleftrightarrow z\}\Big).$$ 
		Therefore by the BK inequality~\eqref{BKineq} and the one arm estimate~\eqref{onearm}, we get that for some constant $C>0$, and for any $z$ with $\|z\|\ge 2\|e\|$, 
		$$\mathbb P\big(\mathcal P_{z,A}(e) \cap \mathcal P_{z,A}^R(e)^c \big) \le \frac{C}{R^2}\cdot \tau(z),$$
		and hence this proves that the first limit in~\eqref{eq:goallimit0} is equal to $0$. 
		For the second limit of~\eqref{eq:goallimit0} we first have the inclusion when $\|z\|>R$,
		\begin{align*}
			  \mathcal P_{z,A}^R(\vec e_1) \cap \mathcal P_{z,A}(e)^c  \subseteq & \{\textrm{there exists an open path from $y$ to $A$ that goes outside $B(0,R)$}\} \cap \{y\longleftrightarrow z\}\\
			 \subseteq &\Big(\bigcup_{ \|w\|\le \sqrt R} \{y\longleftrightarrow w\} \circ \{w\longleftrightarrow z\} \circ \{w\longleftrightarrow \partial B(0,R)\}\Big) \\
			&\cup \Big(\bigcup_{\|w\|>\sqrt R}   \{y\longleftrightarrow w\} \circ \{w\longleftrightarrow z\} \circ \{w\longleftrightarrow A\}\Big),
		\end{align*}
		which entails as above, using a union bound and the BK inequality~\eqref{BKineq}
		\begin{align*}
			&\mathbb P\big(\mathcal P_{z,A}^R(\vec e_1) \cap \mathcal P_{z,A}(e)^c\big) \\& \lesssim  \sum_{\|w\|\le \sqrt R} \frac{1}{(1+\|w-y\|^{d-2}) \cdot \|z\|^{d-2} \cdot R^2} +  \sum_{\|w\|> \sqrt R} \frac{|A|}{\|w\|^{2(d-2)} \cdot (1+\|w-z\|^{d-2})}\\
			& \lesssim \frac{|A|\cdot \tau(z)}{R},
		\end{align*}
		and the same bound holds if we replace $\vec e_1$ by $\vec e_2$. This finally establishes that the second limit in~\eqref{eq:goallimit0} is equal to $0$ and concludes the proof of the lemma.
	\end{proof}

	One can now conclude the proof of the existence of the p-capacity. 
	
	\begin{proof}[\bf Proof of Theorem~\ref{thm.defpcap}] 
		By Lemmas~\ref{lem:zapivotal} and~\ref{lem:pCap2} we get 
		\[
		\lim_{\|z\| \to \infty} \frac{\mathbb P(z\longleftrightarrow A)}{\tau(z)} = \lim_{\|z\| \to \infty} \frac{\mathbb P( \cP_{z,A})}{\tau(z)} =\lim_{K\to\infty}\sum_{e:\|e\|<K}\lim_{\|z\| \to \infty} \frac{\mathbb P(\cP_{z,A}(e))}{\tau(z)}.
		\]
		Applying Lemma~\ref{lem:pCap3} we then deduce 
		\begin{equation}\label{expr.pivotal.pcap}
			\lim_{\|z\| \to \infty} \frac{\mathbb P(z\longleftrightarrow A)}{\tau(z)} = \sum_{\vec e} \mathbb P(\mathcal P_{\infty,A}(\vec e)),
		\end{equation}
		where the sum on the right hand side runs over all oriented edges of the graph. To conclude it just remains to see that this last sum is finite. For this one can use Fatou's lemma together with Lemma~\ref{lem:pCap2} for $K=1$ and also Lemma~\ref{lem:pCap3}.  
	\end{proof}

	
	\begin{proof}[\bf Proof of Proposition~\ref{prop:eqmeasure}]
		The proof of this proposition proceeds as for the proof of Theorem~\ref{thm.defpcap}, with only minor changes. Fix $A$ a finite subset of $\mathbb Z^d$. 
		Given $a\in A$, $z\in \mathbb Z^d$, and some oriented edge $\vec e$ let us define the events (with $e$ denoting the associated unoriented edge), 
		$$\mathcal P_{z,A}(e,a) = \mathcal P_{z,A}(e) \cap \{H_A(z) = a\}, \quad\textrm{and}\quad  \mathcal P_{\infty,A}(\vec e,a)  = \mathcal P_{\infty,A}(\vec e) \cap \{ H_A(x)<\infty\}\cap \{H_A(x) = a\},$$
		with the notation from the previous section, and with, we recall, $H_A(z)$ the first hitting point of $A$ by a simple random walk on the cluster $\mathcal C(z)$ starting from $z$. Observe that on the event when $e=(x,y)$ is a pivotal edge for the event that $z$ is connected to $A$, with say $x$ still connected to $A$ when we close $e$ (but not $y$), the the law of $H_A(z)$ only depends on the configuration of the cluster $\mathcal C^e(x)$, i.e. the cluster of $x$ after we close $e$ (and is equal to the law of the first hitting point of $A$ by a random walk on $\mathcal C^e(x)$ starting from $x$). Hence the whole proof of Lemma~\ref{lem:pCap3} applies here as well and shows that for any fixed edge $e$ and any $a\in A$, 
		$$\lim_{\|z\|\to \infty} \frac {\mathbb P(\mathcal P_{z,A}(e,a))}{\tau(z)} = \mathbb P(\mathcal P_{\infty,A}(\vec e_1,a)) + \mathbb P(\mathcal P_{\infty,A}(\vec e_2,a)). $$ 
		We then conclude the proof exactly as in the previous section. \end{proof}

	\begin{remark}\label{rem.reversibility}\rm{
			Note that by reversibility of the simple random walk, for any oriented edge $\vec e=(x,y)$, on the event $\mathcal P_{\infty,A}(\vec e)$, the probability that a simple random walk on $\mathcal C_\infty(y)$ starting from $x$ hits first $A$ at some point $a\in A$, is the same as the probability that a simple random walk on this cluster, and starting from $a$, hits first $x$ before returning to $A$. Furthermore, it is not difficult to see that $\mathcal C_\infty(y)$ and $\mathcal C_\infty(a)$, conditionally on the event that $y$ and $a$ are connected, have the same law.  
			Hence $e_A(a)$ is also equal to the probability that a simple random walk on $\mathcal C_\infty(a)$ hits first the first edge that disconnects $a$ from infinity, before returning to $A$, as mentioned in Remark~\ref{remark.eqmeasure}. }
	\end{remark}
	

	\section{Large deviations for the number of pioneer points}\label{sec.LD}

	In this section we prove Theorem~\ref{thm.LD}.
	We start by stating a general result, ensuring that some exponential moment of a functional of the occupation field of a cluster is finite, under some technical hypotheses on the two point function. This result can be seen as a generalization of Theorem 1.2 in~\cite{AS24}, and finds its roots both in the tree-graph inequality of Aizenman and Newman~\cite{AN84}, and in Kac's moment formula for local times of random walks, see~\cite[Proposition 2.9]{Sz12}.

		\begin{definition}\label{def.cluster}
		\rm{
			Let $\Lambda\subset \mathbb Z^d$ be a (non necessarily finite) subset of vertices. We denote by $\tau_\Lambda(x,y) = \mathbb P(x\stackrel{\Lambda}{\longleftrightarrow} y)$, the restricted two-point function in $\Lambda$ and for $x\in \Lambda$ we write $\cC(x;\Lambda)$ for the cluster of~$x$ created by the open bonds with both endpoints in the set $\Lambda$. 
		}
	\end{definition}

Given $h:\Lambda\times \Lambda \to [0,\infty)$ and $\varphi : \Lambda \to [0,\infty)$, we write $h*\varphi(x) = \sum_{y\in \Lambda} h(x,y)\varphi(y)$, for $x\in \Lambda$. 
	\begin{theorem}\label{thm.generalLD}
		Let $\Lambda\subset \mathbb Z^d$ and $\varphi: \Lambda \to [0,\infty)$ be given. Assume that there exists a function $h:\Lambda\times \Lambda \to [0,+\infty)$, and a constant $C>0$, such that the following three hypotheses hold:
		\begin{enumerate}
			\item $\tau_\Lambda(x,y) \le h(x,y), \quad \textrm{for all }x,y\in \Lambda$, 
			\item $\sup_{x\in \Lambda} h*\varphi (x) \le 1$, 
			\item $\sum_{y\in \Lambda} h(x,y)\cdot (h*\varphi(y))^2 \le C\cdot  h*\varphi(x), \quad \textrm{for all }x\in \Lambda$.  
		\end{enumerate}
		Then there exists a constant $\lambda>0$ (only depending on the constant $C$, but not on the particular choice of the function $\varphi$), such that for any $x\in \Lambda$, 
		$$\mathbb E\Big[\exp\Big(\lambda\cdot\sum_{y\in  \cC(x;\Lambda)} \varphi(y)\Big)\Big] \le 1+ h*\varphi(x). $$ 
	\end{theorem}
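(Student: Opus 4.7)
The target is an exponential-moment bound, so setting $\psi := h*\varphi$ and $M_x := \sum_{y\in\cC(x;\Lambda)}\varphi(y)$, I would aim to prove polynomial-moment bounds of the form $\mathbb{E}[M_x^k] \le k!\,C_0^{k-1}\,\psi(x)$ for every $k\ge 1$, where $C_0 = C_0(C)$ depends only on $C$. Summing the exponential series $\sum_{k\ge 1} \lambda^k\,\mathbb{E}[M_x^k]/k!$ then yields $\mathbb{E}[e^{\lambda M_x}] - 1 \le \psi(x)\cdot \lambda/(1-\lambda C_0)$, so choosing $\lambda=\lambda(C)>0$ with $\lambda(1+C_0)\le 1$ produces the stated inequality, with $\lambda$ depending only on $C$ as required.

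For $k=1$ the bound is immediate from hypothesis (1): $\mathbb{E}[M_x] = \sum_y \tau_\Lambda(x,y)\varphi(y) \le \psi(x)$. The case $k=2$ illustrates how the remaining hypotheses enter. By the Aizenman-Newman tree-graph inequality~\cite{AN84},
\[
\mathbb{P}\bigl(x \stackrel{\Lambda}{\longleftrightarrow} y_1,\; x \stackrel{\Lambda}{\longleftrightarrow} y_2\bigr) \le \sum_{w\in\Lambda} \tau_\Lambda(x,w)\, \tau_\Lambda(w,y_1)\, \tau_\Lambda(w,y_2).
\]
Substituting $\tau_\Lambda \le h$ via (1) and summing against $\varphi(y_1)\varphi(y_2)$ yields $\mathbb{E}[M_x^2] \le \sum_w h(x,w)\,\psi(w)^2 \le C\,\psi(x)$ by hypothesis (3). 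For general $k$, the tree-graph inequality bounds $\mathbb{P}(x\stackrel{\Lambda}{\longleftrightarrow}y_1,\dots,y_k)$ by a sum over trees on $\{x,y_1,\dots,y_k\}$ of products of $\tau_\Lambda$ along edges. After weighting each $y_i$ by $\varphi(y_i)$ and summing, each leaf contributes a factor of $\psi$ at its parent vertex; I would then process every tree from its leaves inward, absorbing configurations of the form $\sum_w h(\cdot,w)\,\psi(w)^2$ into a single $C\,\psi$ factor at each branching vertex via hypothesis (3), and stripping off residual factors $\psi(\cdot)\le 1$ at degree-two internal vertices of a linear chain via hypothesis (2). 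Iterating these two contractions reduces each tree to a single $\psi(x)$ at the root times $C^r$, where $r\le k-1$ counts the branchings; summing over tree shapes and labellings then produces the claimed $k!\,C_0^{k-1}$ bound.

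The main technical obstacle is precisely this bookkeeping: one must organise the leaf-peeling so that at each step hypothesis (3) is applicable (that is, exactly two $\psi$-factors meet at a vertex), verify that hypothesis (2) disposes of all residual $\psi$-factors along linear chains, and check that the combinatorics of the tree-graph sum yield a constant of order $k!\,C_0^{k-1}$ rather than something worse. I expect this to proceed along the lines of~\cite[Theorem~1.2]{AS24}, with routine modifications to accommodate an arbitrary non-negative function $\varphi$ in place of the indicator of a finite set used there.
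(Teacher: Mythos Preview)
Your proposal is correct and follows essentially the same route as the paper: bound the moments $\mathbb E[M_x^n]$ by $K^{n-1}$ times a factorial times $h*\varphi(x)$ via the tree-graph/BK inequality combined with hypotheses (1)--(3), then sum the exponential series. The paper obtains the slightly sharper moment bound $K^{n-1}(1\vee(n-2))!\,h*\varphi(x)$ through an explicit induction on $n$ (rather than the full tree combinatorics you sketch), but this refinement is immaterial for the conclusion; both the paper and you defer the details to~\cite[Theorem~1.2]{AS24}.
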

	\begin{proof}[\bf Proof]
		The proof is exactly the same as the one of Theorem 1.2 in~\cite{AS24}. Briefly, the main idea is to show by induction on $n\ge 1$, that for some constant $K>0$, one has for all $n\ge 1$ and all $x\in \Lambda$, (with $a\vee b = \max(a,b)$),  
		\begin{equation}\label{HR.expmoment}
			\mathbb E\Big[\Big(\sum_{y\in \cC(x;\Lambda)} \varphi(y)\Big)^n \Big] \le K^{n-1} \cdot (1\vee (n-2))! \cdot h*\varphi(x).
		\end{equation}
		Indeed once this is proved, the result follows by summation over $n$. 
		Note that the case $n=1$ in~\eqref{HR.expmoment} directly follows from the first hypothesis of the theorem. Then the induction step can be shown using the BK inequality and the two other hypotheses. We refer the reader to~\cite{AS24} for details. 
	\end{proof}
	
	Let us now explain how this result may be used to prove Theorem~\ref{thm.LD}. We will in fact prove bounds on the half space first, and then naturally deduce the desired estimate on a cube, just using that the percolation on a cube is stochastically dominated by percolation on a half-space. So let $\mathcal H = \{z\in \mathbb Z^d : z_1\ge 0\}$, and for $s>0$,  let
	$Q_s = B(0,s)\cap \partial \mathcal H$, where we set $\partial \mathcal H = \{z:z_1= 0\}$ (note the slight abuse of notation here in the case of a spread-out graph, where with our definition $\partial \mathcal H$ is not the boundary of $\mathcal H$ in the usual sense). 
	We now fix $s>0$, and define the function $\varphi : \mathcal H\to \mathbb R_+$, by 
	$$\varphi(y) = \frac {\1(y\in Q_s)}{s^2}.$$ 
	Next, for $x,y\in \mathcal H$, let $$r_{x,y} = d(x,\partial \mathcal H)\wedge \|x-y\|,$$
	where we write $d(y,A)$ for the distance from $y$ to a set $A$, and $a\wedge b = \min(a,b)$.  
	Let then $h:\mathcal H \times \mathcal H \to \mathbb R_+$ be defined by 
	$$h(x,y) = \frac{(1+r_{x,y})\cdot (1+r_{y,x})}{1+\|x-y\|^d}, \quad x,y\in \mathcal H,$$ 
	All the estimates we need are gathered in the following lemma. 
	\begin{lemma}\label{lem.LD}
		There exists a constant $C>0$ (independent of $s$), such that for any $x,y\in \mathcal H$, 
		\begin{enumerate}
			\item $\tau_{\mathcal H}(x,y) \le C\cdot h(x,y)$ 
			\item  $h*\varphi(y) \le C$. 
			\item $ \sum_{z\in \mathcal H} h(y,z)\cdot  (h*\varphi(z))^2 \le C\cdot h*\varphi(y)$.  
		\end{enumerate}
	\end{lemma}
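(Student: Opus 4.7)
Part (1) is a deep bound on the restricted two-point function $\tau_{\mathcal H}$, established in~\cite{CH20, CHS23} and further sharpened in~\cite{Romain}; I would quote it directly.

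For part (2), the key simplification is that for $w\in Q_s\subset\partial\mathcal H$ one has $r_{w,y}=0$, and since $\|y-w\|\ge y_1$ for any $w\in\partial\mathcal H$, one also has $r_{y,w}=y_1$. Writing $y=(y_1,y')$ with $y'\in\mathbb Z^{d-1}$ the orthogonal projection of $y$ onto $\partial\mathcal H$, the elementary inequality $\|y-w\|\ge(y_1+\|y'-w\|)/\sqrt2$ reduces the sum $\sum_{w\in Q_s}h(y,w)$ to a $(d-1)$-dimensional Riemann sum which, after rescaling by $1+y_1$, is uniformly bounded:
\[
\sum_{w\in Q_s}h(y,w)\lesssim(1+y_1)\sum_{w'\in\mathbb Z^{d-1}}\frac{1}{(1+y_1+\|y'-w'\|)^d}\lesssim 1.
\]
Hence $h*\varphi(y)\lesssim s^{-2}\le C$.

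For part (3), I would first use the uniform bound $q(z):=(h*\varphi)(z)\le C/s^2$ from part (2) to reduce the claim to
\[
(h*q)(y)=\sum_zh(y,z)q(z)\lesssim s^2\, q(y).
\]
I would split the sum into the regions $\|z\|\le 3s$ and $\|z\|>3s$: in the first one uses the uniform bound $q(z)\lesssim s^{-2}$, while in the second, since $\|z-w\|\asymp\|z\|$ for every $w\in Q_s$, the same Riemann-sum estimate as in part (2) yields the refined bound $q(z)\lesssim s^{d-3}(1+z_1)/\|z\|^d$. Combined with the crude pointwise bound $h(y,z)\lesssim(1+y_1)(1+z_1)/(1+\|y-z\|^d)$ and the classical convolution identity $\sum_z\|y-z\|^{-a}\|z-w\|^{-b}\asymp\|y-w\|^{d-a-b}$ (valid for $a,b<d$ with $a+b>d$), a case analysis on the position of $y$ then produces the claim.

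The main obstacle lies in part (3), particularly in the regime $y_1\gg s$, where the naive full-space convolution bound for $h*h$ produces a factor of $\|y-w\|^2$ that can be much larger than $s^2$; the extra $(1+y_1)$-decay built into $h$, which encodes the Dirichlet boundary condition, is what compensates for this and delivers the desired comparison with $s^2\, q(y)$.
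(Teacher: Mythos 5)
Parts (1) and (2) of your proposal are essentially what the paper does: part (1) is indeed a citation, and your more explicit handling of the boundary factor $1+y_1$ in part (2) is the same computation that leads the authors to their asymptotics $h*\varphi(y)\asymp \frac{r_y}{s^2}\cdot\frac{(s\wedge D_y)^{d-1}}{D_y^d}$ (their equation at the start of the proof of the third item).

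For part (3), however, the reduction you propose is not valid, and this is a genuine gap. You bound one factor of $q(z)=h*\varphi(z)$ by $\sup_z q(z)\asymp s^{-2}$ to reduce the claim to $(h*q)(y)\lesssim s^2\,q(y)$, but this inequality fails. Take $y$ in the hyperplane with $y_1=0$ (so $r_y=1$) and at distance $D_y\gg s$ from $Q_s$, and consider the contribution to $(h*q)(y)$ from $z$ with $s\lesssim D_z\lesssim D_y$ and $z_1\asymp\sigma\lesssim D_z$. Using $h(y,z)\asymp \frac{r_z}{(1+\|y-z\|)^d}$ (since $r_{y,z}=0$) and $q(z)\asymp \frac{r_z s^{d-3}}{D_z^d}$, a direct dyadic sum gives a contribution of order $\frac{s^{d-3}}{D_y^{d-2}}$, which exceeds $s^2 q(y)\asymp \frac{s^{d-1}}{D_y^d}$ by a factor $D_y^2/s^2\to\infty$. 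The point is that $q(z)^2$ decays like $D_z^{-2d}$ whereas your upper bound $s^{-2}q(z)$ decays only like $D_z^{-d}$, and that extra factor of $D_z^{-d}$ is exactly what is needed to make the sum over the region $s\lesssim D_z\lesssim D_y$ comparable to $q(y)$ rather than $D_y^2 q(y)/s^2$. The paper keeps $q(z)^2$ intact, substitutes the sharp asymptotics for it, and then performs a case analysis over $D_z\le s$, $s\le D_z\le D_y$, $D_z\ge D_y$ (and symmetrically for $D_y\le s$); there is no way to collapse the quadratic dependence on $q(z)$ to a linear one by a uniform sup bound.

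A secondary remark on part (2): your claimed conclusion $h*\varphi(y)\lesssim s^{-2}$ is actually true and slightly sharper than the lemma's $\lesssim C$, but the sharper statement requires keeping the $(1+y_1)$ numerator all the way through, as you do; the paper's displayed intermediate bound $\frac{1}{s^2}\sum_{z\in Q_s}(1+\|y-z\|^{d-1})^{-1}$ only gives $\lesssim (\log s)/s^2$, and the sharp $\asymp$ version appears afterwards in their equation for $h*\varphi(y)$.
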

	
	\begin{proof}[\bf Proof of Lemma~\ref{lem.LD}] 
		
		The first item has been proved by Panis~\cite{Romain} relying on recent results by~\cite{CH20, CHS23, HMS23}.

		Let us move to the second item now. By definition of $h$ and $\varphi$, one has for any $y\in \mathcal H$,  
		$$h*\varphi(y) = \frac {1}{s^2}\sum_{z\in Q_s} \frac{1+r_{y,z}}{1+\|y-z\|^d} \lesssim \frac {1}{s^2}\sum_{z\in Q_s} \frac{1}{1+\|y-z\|^{d-1}}\lesssim 1,$$
		where the implicit constant in the last inequality is independent of $y$ and $s$; hence this proves the second item.

		Finally, we proceed with the last item of the lemma. Let $D_y =1+d(y,Q_s)$. Then the previous computation yields in fact 
		\begin{equation} \label{h*phi}
			h*\varphi(y) \asymp \frac{r_y}{s^2}\cdot \frac {(s\wedge D_y)^{d-1}}{D_y^d},
		\end{equation}
		uniformly in $y\in \mathcal H$. Now, it amounts to estimating the sum 
		$$\sum_{z\in \mathcal H} h(y,z) \cdot (h*\varphi(z))^2.$$
		Assume first that $D_y\ge s$. Then  replacing $h*\varphi(z)$ by the previous bound yields
		\begin{align*}
			\sum_{z\in \mathcal  H} h(y,z) & \cdot (h*\varphi(z))^2 
			\lesssim \frac{r_y}{s^4} \cdot \sum_{z\in \mathcal H} \frac{r_z^2 (1+r_{z,y})\cdot (s\wedge D_z)^{2(d-1)}}{(1+\|y-z\|^d)\cdot D_z^{2d}}  \\
			& \lesssim \frac{r_y}{s^4} \cdot \Big\{ 
			\sum_{z : D_z \le s/2}\frac{r_z}{D_y^d}  + \sum_{z : s/2 \le D_z\le D_y/2} \frac{r_z^3 \cdot s^{2(d-1)}}{D_y^d\cdot D_z^{2d}} + \sum_{z : D_z \ge D_y/2} 
			\frac{r_z^2 \cdot s^{2(d-1)}}{ (1+\|y-z\|^{d-1})\cdot D_z^{2d}}\Big\} \\ 
			&\lesssim r_y \cdot \frac{s^{d-3}}{D_y^d} \asymp h*\varphi(y). 		\end{align*}
		On the other hand, if $D_y\le s$, then 
		\begin{align*}
			\sum_{z\in \mathcal H} h(y,z) \cdot (h*\varphi(z))^2 
			& \lesssim  \frac{r_y}{s^4} \cdot \sum_{z\in \mathcal H} \frac{r_z^2 (1+r_{z,y})\cdot (s\wedge D_z)^{2(d-1)}}{(1+\|y-z\|^d)\cdot D_z^{2d}}  \\
			& \lesssim \frac{r_y}{s^4} \cdot \Big(
			\sum_{z : D_z \le 2s}\frac{1}{1+\|y-z\|^{d-1}}  + \sum_{z : D_z\ge 2s} \frac{s^{2(d-1)}}{ D_z^{3d-3}}\Big) \\ 
			&\lesssim \frac{r_y}{s^3} \lesssim  \frac{r_y}{s^2 \cdot D_y} \asymp h*\varphi(y)
		\end{align*}
		and this finishes the proof.
	\end{proof}
	We can now conclude the proof of our main result. 
	
	\begin{proof}[\bf Proof of Theorem~\ref{thm.LD}] 
		We aim at applying Theorem~\ref{thm.generalLD} on the subspace $\Lambda = \mathcal H$, with  suitable multiples of the functions $h$ and $\varphi$. More precisely, we define $\widetilde h(x,y) = Ch(x,y)$ and $\widetilde {\varphi}(y) =  \varphi(y)/C$, with the constant $C$ from Lemma~\ref{lem.LD}. This lemma and Theorem~\ref{thm.generalLD} then ensure the existence of a constant $\lambda>0$, such that for any $x\in \mathcal H$, and any $s>0$, 
		$$\mathbb E\Big[ \exp\Big(\lambda \cdot \sum_{y\in \cC(x;\mathcal H)} \widetilde \varphi(y)\Big)\Big] \le 1+ \widetilde h*\widetilde \varphi(x),$$
		where we recall $ \cC(x;\mathcal H)= \{z\in \mathcal H : z\stackrel{\mathcal H}{\longleftrightarrow} x\}$, is the cluster of $x$ generated by bonds lying in $\mathcal H$. 
		Consequently, for any $x\in \mathcal H$, 
		$$\mathbb E\Big[ \exp\Big(\lambda \cdot \sum_{y\in  \cC(x;\mathcal H)} \widetilde \varphi(y)\Big) \cdot \1(  x\stackrel{\mathcal H}{\longleftrightarrow} Q_s)\Big] \le \widetilde h*\widetilde \varphi(x)+ \mathbb P\big(x\stackrel{\mathcal H}{\longleftrightarrow} Q_s\big).$$ 
		Using then Chebyshev's exponential inequality, we obtain for some (possibly smaller) constant $\lambda>0$, and any $t\ge 1$, 
		$$\mathbb P\Big(| \cC(x;\mathcal H)\cap Q_s|\ge t\Big) 
		\le \Big( \widetilde h*\widetilde \varphi(x)+ \mathbb P\big(x\stackrel{\mathcal H}{\longleftrightarrow} Q_s\big)\Big)\cdot \exp(- \frac{\lambda \cdot t}{s^2}).$$
		Now by~\eqref{h*phi}, one has when $s\le r_x$, 
		$$\widetilde h*\widetilde \varphi(x) \lesssim \frac {s^{d-3}} {r_x^{d-1}}, $$
		uniformly over $x\in\mathcal H$. 
		and noting that for $x$ to be connected to $Q_s$, it must hold that $x$ is also connected to the boundary of the cube $B(x,r_x-1)$, we deduce from the one arm estimate~\eqref{onearm}, that 
		$$\mathbb P\big(x\stackrel{\mathcal H}{\longleftrightarrow} Q_s\big)\lesssim \frac 1{r_x^2},$$
		(uniformly over $s>0$ and $x\in \mathcal H$). Altogether this shows that when $s\le r_x$, 
		$$\mathbb P\Big(| \cC(x;\mathcal H)\cap Q_s|\ge t\Big) 
		\lesssim \frac 1{r_x^2} \cdot \exp(- \frac{\lambda \cdot t}{s^2}). $$ 
		To conclude it suffices to observe that by translation invariance, for any $1\le s \le r$, and any $x\in \partial B(0,r)$, 
		$$\mathbb P\big(|\mathcal C_r(0)\cap  Q_s(x)|\ge t \big) = \mathbb P\big(|\cC(x;B(x,r)) \cap  B(0,s)\cap \partial B(x,r)|\ge t\big) \le d\cdot \mathbb P\big(|\cC(x;\mathcal H)\cap Q_s|\ge t/d\big),$$
		using for the last inequality that $B(0,s)$ intersects at most $d$ faces of the cube $B(x,r)$. This concludes the proof of the theorem. 
	\end{proof}
	
	\begin{remark}
		\rm{
		We note that if we only use the bound 
			\begin{equation}\label{twopointcube}
			\tau_{\mathcal H}(x,y) \le c_0 \cdot \frac{(1+r_{x,y})\wedge (1+  r_{y,x})}{1+\|x-y\|^{d-1}},
		\end{equation}
		proved by~\cite[Theorem~6]{CHS23}, then we would get an extra logarithmic term in the exponent in Theorem~\ref{thm.LD} which would not affect any of our results.

		}
	\end{remark}

	\begin{remark}\label{rem.simpleLD}
		\rm{Note that if one only uses the more basic bound $$\tau_{B(0,r)}(x,y)\le \tau(y-x)\lesssim \frac 1{1+ \|y-x\|^{d-2}},$$
			and define 
			$$\varphi(y) = \varepsilon \frac{\1(y\in Q_s(x))}{s^3},$$  one can check that for $\varepsilon$ small enough, the hypotheses of Theorem~\ref{thm.generalLD} are also  satisfied (with $h(x,y) = \tau(x,y)$). Indeed, one has in this case 
			$$h*\varphi(y) \asymp \frac 1{s^2} \cdot (\frac{s}{D_y}\wedge 1)^{d-2},$$
			and one can deduce using a similar computation as above, that an analogue of Lemma~\ref{lem.LD} holds as well. 
			This shows (without relying on the heavy machinery of~\cite{KN11,CH20,CHS23}) that for some constant $c>0$, for any $r,s,t\ge 1$, and any $x\in \partial B(0,r)$,  
			\begin{align}\label{eq:badbound}
			\mathbb P(|\mathcal C_r(0)\cap Q_s(x)|\ge t) \le \exp(-c \frac{t}{s^3}). 
			\end{align}
			We note that in dimension $d>7$, this bound is sufficient to prove the claim \ref{claim:Tsloc} appearing in the next section, showing that most pioneer points are regular, with our definition of regular points (which is purely geometric, while originally in~\cite{KN11} it was defined in terms of a slightly less flexible conditional probability).   }
	\end{remark}

	\section{Regularity conditions}\label{sec.regpoints} 
	
	In this section we prove an important result, Proposition~\ref{thm:linegoodpoints}, that will be used in the next section for proving  Theorems~\ref{thm.IIC} and~\ref{thm.pcap.ball} (actually for the latter we will use a straightforward variant of it). The proposition states that most pioneer points on the boundary of a cube (those reached by the cluster of the origin from inside the cube) are regular, in the sense that the density of the cluster of the origin around them, both inside the cube and on its boundary, is not too high, at every scale. 
	A very similar result was proved in~\cite{KN11}, with the notable difference, that in~\cite{KN11} a more complicated notion of regular point was used as we explain below.

	We start with a number of definitions and notation used throughout this section and the next two. Recall also Definition~\ref{def.cluster}.

	\begin{definition}
		\rm{
			For $r>0$ we define the number of pioneer points $X_r$ as follows
			
			\[
			X_r=\left|\left\{x\in \partial \ball{0}{r}: 0\stackrel{ \ball{0}{r}}{\longleftrightarrow}   x    \right\}  \right|=|\mathcal C(0;B(0,r))\cap \partial B(0,r)|.	
			\]	
			For $x\in \partial \ball{0}{r}$ we define global and local density conditions. For $s>0$ we write
			\begin{align*}
				\cT_s(x) = & \{ |\cC(x;\ball{0}{r})\cap \ball{x}{s}
				| \leq s^4 (\log s)^7\} \\ &\cap \{|\cC(x;\ball{0}{r})\cap \ball{x}{s}\cap \partial \ball{0}{r} |\leq s^2 (\log s)^7  \}.
			\end{align*}
			We now define a local density condition as follows
			\begin{align*}
				\cT_s^\loc(x) =&\bigcap_{y\in \ball{x}{s}} \{|\cC(y; \ball{x}{s^d}\cap \ball{0}{r})\cap \ball{x}{s}|\leq s^4 (\log s)^4\} \\
				&\bigcap \bigcap_{y\in \ball{x}{s}\cap \partial \ball{0}{r}} \{|\cC(y;\ball{x}{s^d}\cap \ball{0}{r}) \cap \ball{x}{s}\cap \partial \ball{0}{r}|\leq s^2 (\log s)^4 \} \\
				&\bigcap \{ \exists \ \text{ at most }  (\log s)^3 \text{ disjoint paths from }\ball{x}{s} \text{ to } \partial \ball{x}{s^d} \text{ in } \ball{0}{r} \}.
			\end{align*}
		}
	\end{definition}
	
	\begin{definition}\label{def.Kreg}
		\rm{
			Let $x\in \partial \ball{0}{r}$. We call $x$ an $s$-(locally) good point if $\cT_s(x)$ (resp.\ $\cT_s^\loc(x)$) holds. Otherwise we call it an $s$-(locally) bad point. 
			We call $x$ a $K$-regular point if it is $s$-good for all $s\geq K$ and connected to the origin inside $B(0,r)$. Otherwise, if $x$ is connected to the origin in $B(0,r)$ but not $K$-regular, we call $x$ a $K$-irregular point. We denote by $X_r^{K-\reg}$ (resp.\ $X_r^{K-\irr}$) the number of $K$-regular (resp.\ irregular) points on $\partial \ball{0}{r}$. }
	\end{definition}

\begin{remark}\label{rem:difference}
		\rm{
We explain the difference between this notion of regularity and the one used in~\cite{KN11}. The first difference is that here we also control the number of pioneers on small balls on the boundary of the cube and not only in the interior of the cube. The second difference is that in~\cite{KN11} a point~$x$ is called $s$-good if the conditional probability of the event $\cT_s^\loc(x)$ (without the condition on the number of points on the small balls on the surface), given the cluster of $x$ inside the box, is smaller than what the large deviations of Aizenman and Newman~\cite{AN84} ensure. 
	}
	\end{remark}

	The next claim is essentially the same as Claim 4.2 from~\cite{KN11}. 
	
	\begin{claim}\label{cl:locgoodgood}
		For every $x\in \partial \ball{0}{r}$ and $s>0$ we have 
		\[
		\cT_s^\loc(x)\implies \cT_s(x).
		\]
	\end{claim}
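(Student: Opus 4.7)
\textbf{Proof proposal for Claim~\ref{cl:locgoodgood}.} The plan is to decompose $\cC(x;B(0,r))\cap B(x,s)$ into the local clusters of the percolation restricted to $B(x,s^d)\cap B(0,r)$, bound the number of these clusters using the disjoint-paths clause of $\cT_s^{\loc}(x)$, and bound the size of each individual cluster using the two density clauses. Explicitly, for $y\in \cC(x;B(0,r))\cap B(x,s^d)\cap B(0,r)$ set $C_y := \cC(y;B(x,s^d)\cap B(0,r))$; the distinct sets $C_y$ are pairwise vertex-disjoint and partition $\cC(x;B(0,r))\cap B(x,s^d)\cap B(0,r)$. Since $B(x,s)\subseteq B(x,s^d)$, the set $\cC(x;B(0,r))\cap B(x,s)$ is a disjoint union of the sets $C_y\cap B(x,s)$, where $y$ ranges over a choice of one representative per distinct local cluster meeting $B(x,s)$. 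Let $k$ denote the number of such local clusters.

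The first key step is to show $k\le (\log s)^3$. There is nothing to prove if $k=1$, so suppose $k\ge 2$. Any two distinct local clusters are, by their very definition, not connected inside $B(x,s^d)\cap B(0,r)$; but they all lie inside the connected set $\cC(x;B(0,r))\subseteq B(0,r)$, so each of them must contain an open path that leaves $B(x,s^d)$, and in particular each of them contains a vertex of $\partial B(x,s^d)$. In each such local cluster $C_y$ choose an open path from a vertex in $C_y\cap B(x,s)$ to a vertex in $C_y\cap \partial B(x,s^d)$; since the chosen paths lie in pairwise disjoint local clusters, they are pairwise vertex-disjoint open paths from $B(x,s)$ to $\partial B(x,s^d)$ inside $B(0,r)$. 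The third clause of $\cT_s^{\loc}(x)$ therefore forces $k\le (\log s)^3$.

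The second step is to combine this count with the density clauses. For each of the $k$ local clusters $C_y$ meeting $B(x,s)$, the first clause of $\cT_s^{\loc}(x)$ gives $|C_y\cap B(x,s)|\le s^4(\log s)^4$, and, if $C_y$ additionally meets $\partial B(0,r)$ inside $B(x,s)$, the second clause gives $|C_y\cap B(x,s)\cap \partial B(0,r)|\le s^2(\log s)^4$. Summing,
\[
|\cC(x;B(0,r))\cap B(x,s)|\le k\cdot s^4(\log s)^4\le s^4(\log s)^7,
\]
and likewise $|\cC(x;B(0,r))\cap B(x,s)\cap \partial B(0,r)|\le s^2(\log s)^7$, which is exactly $\cT_s(x)$.

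I do not expect any real obstacle: the only delicate point is the claim that distinct local clusters $C_y$ not containing a common vertex of $\partial B(x,s^d)$ yield vertex-disjoint paths, which is immediate from the fact that local clusters are disjoint by construction. One should just be a little careful with the degenerate case $k=1$ (where the global cluster is contained in $B(x,s^d)\cap B(0,r)$) and with the fact that the $C_y$'s come from percolation inside a box slightly different from $B(0,r)$, but neither affects the bookkeeping.
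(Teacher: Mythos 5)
Your proof is correct and follows essentially the same route as the paper: decompose the global cluster near $x$ into local clusters of the restricted percolation on $B(x,s^d)\cap B(0,r)$, bound their number by $(\log s)^3$ via the disjoint-paths clause, and bound each one's contribution by the density clauses. The paper states this more tersely; you have merely spelled out the decomposition and the disjointness of the resulting crossing paths, which is a useful clarification rather than a different argument.
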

	
	\begin{proof}[\bf Proof]

		We notice that for the cluster of $x$ created by bonds lying entirely in $\ball{0}{r}$ to be different to the one created from $\ball{x}{s^d}$, there must exist paths that connect $x$ to the complement of $\ball{x}{s^d}$. But any such connection must contain an open path from $\partial B(x, s^d)$ to $B(x,s)$. So since under $\cT_s^\loc(x)$ all clusters have size at most $s^4 (\log s )^4$ and there exist at most $(\log s)^3$ disjoint paths from $\ball{x}{s}$ to $\partial \ball{x}{s^d}$, it follows that the total size of the cluster of $x$ will be at most $s^4 (\log s)^7$. In the same way, we get that  the size of the cluster of $x$ on the surface will have size at most $s^2 (\log s)^7$. 
	\end{proof}

	In the following claim we show that most points on the boundary of a cube satisfy the local (hence the global) density condition, at any scale. 
	
	\begin{claim}\label{claim:Tsloc}
		There exists a positive constant $c$ so that for every $x\in \partial B(0,r)$ and $s\in (0,r)$ we have 
		\[
		\pr{\cT_s^\loc(x)} \geq 1 - \exp(-c(\log s)^4).
		\]
	\end{claim}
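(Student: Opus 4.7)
The plan is to bound the failure probability of each of the three events comprising $\cT_s^\loc(x)$ separately, and then to conclude by a union bound. For $s$ smaller than some absolute constant the desired bound $\exp(-c(\log s)^4)$ is $\Omega(1)$, so there is nothing to prove; we thus focus on $s$ large.

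For the first (volumetric) condition, monotonicity of the cluster under reduction of the set of allowed bonds gives, for every $y\in \ball{x}{s}$,
\[
|\cC(y;\ball{x}{s^d}\cap \ball{0}{r})\cap \ball{x}{s}| \;\leq\; |\cC(y)\cap \ball{y}{2s}|.
\]
I would apply Theorem~\ref{thm.generalLD} on $\Lambda = \Z^d$ with $h(u,v) = C_0\|u-v\|^{2-d}$, which is a valid majorant of $\tau$ by~\eqref{twopoint}, and the test function $\varphi(z) = c_0 \cdot \1\{z\in \ball{y}{2s}\}/s^4$. A computation analogous to the one performed in Lemma~\ref{lem.LD}, splitting the sums into the regimes $z\in \ball{y}{2s}$ and $z$ far away and using $d>6$ for convergence, verifies its three hypotheses uniformly in $s$ provided $c_0$ is taken small enough. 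This yields
\[
\pr{|\cC(y)\cap \ball{y}{2s}|\geq t} \;\leq\; C\exp(-c\, t/s^4) \quad \text{for all } t\geq 0.
\]
Setting $t = s^4(\log s)^4$ and union-bounding over the $\lesssim s^d$ vertices $y\in \ball{x}{s}$ gives a total failure probability of order $s^d\exp(-c(\log s)^4)$, which is absorbed into $\exp(-c'(\log s)^4)$ for $s$ large.

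For the second (surface) condition, the same monotonicity allows us to replace the cluster in $\ball{x}{s^d}\cap \ball{0}{r}$ by the cluster in a half-space $\H_x$ which contains $\ball{0}{r}$ and whose boundary contains the face of $\partial\ball{0}{r}$ on which $x$ lies, so that $\ball{x}{s}\cap\partial\ball{0}{r}\subseteq \partial \H_x$ for $s$ small enough compared to $r$. The half-space bound established en route to Theorem~\ref{thm.LD} via Lemma~\ref{lem.LD}, which applies to any starting vertex by translation invariance of $\H_x$, then gives, for every $y\in \ball{x}{s}\cap \partial\ball{0}{r}$,
\[
\pr{|\cC(y;\H_x)\cap \ball{x}{s}\cap \partial \H_x|\geq t}\;\leq\; C\exp(-c\, t/s^2).
\]
Taking $t = s^2(\log s)^4$ and union-bounding over the $\lesssim s^{d-1}$ relevant $y$'s yields the required estimate.

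For the third condition---the existence of more than $(\log s)^3$ edge-disjoint open paths from $\ball{x}{s}$ to $\partial\ball{x}{s^d}$ in $\ball{0}{r}$---the BK inequality~\eqref{BKineq} gives, for any integer $k$,
\[
\pr{\text{there exist } k \text{ edge-disjoint open paths from } \ball{x}{s} \text{ to } \partial\ball{x}{s^d}}\;\leq\; \pr{\ball{x}{s}\longleftrightarrow \partial\ball{x}{s^d}}^k.
\]
A union bound over the $\lesssim s^d$ starting vertices in $\ball{x}{s}$, together with the Kozma--Nachmias one-arm estimate from Theorem~\ref{thm:onearm}, bounds the base of the $k$-th power by $s^d\cdot (s^d)^{-2} = s^{-d}$, so choosing $k = \lceil(\log s)^3\rceil$ yields a failure probability bounded by $s^{-d(\log s)^3} = \exp(-d(\log s)^4)$. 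A final union bound of the three failure probabilities closes the proof. The main technical step is verifying the hypotheses of Theorem~\ref{thm.generalLD} to obtain the sharp $s^4$ scale in condition~(1); once this is done, the surface bound drops out of the machinery developed for Theorem~\ref{thm.LD}, and the disjoint-paths bound is a clean application of BK and the one-arm estimate.
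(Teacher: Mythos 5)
Your proposal follows the paper's same three-part structure (handling separately the volume, surface, and disjoint-paths conditions) and uses essentially the same tools. The one genuine variation is in the volume condition: you apply Theorem~\ref{thm.generalLD} with the test function $\varphi(z)=c_0\,\1\{z\in\ball{y}{2s}\}/s^4$ to bound $|\cC(y)\cap \ball{y}{2s}|$, whereas the paper cites the Aizenman--Newman bound~\cite[Proposition~5.1]{AN84}. Since Theorem~\ref{thm.generalLD} is the paper's own abstraction of that inequality, the content is equivalent; if anything your route is slightly cleaner, because the monotonicity inequality $|\cC(y;\ball{x}{s^d}\cap\ball{0}{r})\cap\ball{x}{s}|\le|\cC(y)\cap\ball{y}{2s}|$ that you invoke passes directly to the unrestricted cluster intersected with a ball, which is exactly the quantity your choice of $\varphi$ controls. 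The surface and disjoint-paths bounds match the paper's.

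One imprecision in the surface step: you assert that $\ball{x}{s}\cap\partial\ball{0}{r}\subseteq\partial\mathcal{H}_x$ provided $s$ is small compared to $r$. This is false whenever $x$ lies within distance $s$ of an edge or corner of its face of the cube, no matter how small $s$ is relative to $r$; in that case $\ball{x}{s}\cap\partial\ball{0}{r}$ is spread over up to $d$ faces. The fix is to apply the half-space bound separately on each face (taking a different half-space per face) and union-bound, costing a harmless factor of $d$; this is precisely the splitting used in the last display of the paper's proof of Theorem~\ref{thm.LD}. With that adjustment your argument is complete.
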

	
	\begin{proof}[\bf Proof]
		It was shown in~\cite[Proposition 5.1]{AN84} that for any $R>0$, and any $x\in B(0,R)$, one has 
		$$\mathbb P(|\mathcal C(x;B(0,R))|\ge t) \le (e/t)^{1/2} \cdot \exp(-ct/R^4),$$ for some constant $c>0$. This, and a union bound over all points $y\in B(x,s)$, allows to handle the probability that there are large clusters inside $B(x,s^d)$.  Likewise, Theorem~\ref{thm.LD} allows to control the size of the clusters on the boundary of the cube. As for the probability of having $(\log s)^3$ disjoint paths from $B(x,s)$ to $\partial B(x,s^d)$, we use BK inequality~\ref{BKineq}, together with Theorem~\ref{thm:onearm} and a union bound again over the starting points of the crossings on $\partial B(x,s)$, to show that it is of order at most $(s^{d-1} s^{-2d})^{(\log s)^3}=\exp(-d(\log s)^4)$, as desired. 
	\end{proof}

	\begin{remark}\label{rem:scubed}
		\rm{
			Let $\til{\cT}_s(x)$ and $\til{\cT_s}^\loc(x)$ be defined as the events ${\cT}_s(x)$ and ${\cT_s}^\loc(x)$ by replacing the bounds $s^2(\log s)^7$ and~$s^2(\log s)^4$ by $s^3(\log s)^7$ and~$s^3(\log s)^4$ respectively. Employing~\eqref{eq:badbound} from Remark~\ref{rem.simpleLD} then yields the same bound as in the statement of Claim~\ref{claim:Tsloc} and the statement and proof of Claim~\ref{cl:locgoodgood} both remain unchanged.
		}
	\end{remark}

	We can now state the main result of this section. 
	
	\begin{proposition}\label{thm:linegoodpoints}
		There exist two positive constants $c$ and $C$ so that for all $K$ sufficiently large and all positive $r$ and $M$, we have 
		\[
		\pr{X_r\geq M, X_r^{K-\reg}\leq \frac{X_r}{2} }\leq Cr^{d-1}\cdot \exp\left(-c\cdot (\log M)^4\right). 
		\]
	\end{proposition}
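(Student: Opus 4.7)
The event $\{X_r \ge M\} \cap \{X_r^{K-\reg} \le X_r/2\}$ is contained in $\{X_r^{K-\irr} \ge M/2\}$, so it suffices to bound the probability of the latter. Let $\mathrm{Irr}(x)$ denote the event that $x \in \partial B(0,r)$ is a $K$-irregular pioneer, so that $X_r^{K-\irr} = \sum_x \1(\mathrm{Irr}(x))$. By definition, $\mathrm{Irr}(x)$ implies $\cT_s(x)^c$ holds for some $s \ge K$, and Claims~\ref{cl:locgoodgood} and~\ref{claim:Tsloc} give $\mathbb{P}(\cT_s(x)^c) \le \exp(-c(\log s)^4)$ uniformly in $x$ and $r$. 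The naive first-moment bound $\mathbb{E}[X_r^{K-\irr}] \lesssim r^{d-1} \exp(-c'(\log K)^4)$, combined with Markov's inequality, yields only a $1/M$ decay factor, which is weaker than the required $\exp(-c(\log M)^4)$.

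The plan is to partition irregular pioneers by their minimal witness scale $s(x) \ge K$ and treat large and small witnesses separately. Fix a threshold $S = M^{\alpha}$ for a small $\alpha > 0$ to be chosen. For the contribution from irregular pioneers with witness $s(x) > S$, a direct union bound gives
\[
\mathbb{P}\bigl(\exists\, x \in \partial B(0,r),\; s > S : \cT_s(x)^c\bigr) \le r^{d-1} \sum_{s > S} \exp(-c(\log s)^4) \le C r^{d-1} \exp(-c''(\log M)^4),
\]
which is of the required order. It then remains to show that the probability of having at least $M/2 - 1$ irregular pioneers whose witness scales lie in $[K, S]$, while $X_r \ge M$, is also at most $C r^{d-1} \exp(-c(\log M)^4)$.

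This small-witness case is the main obstacle. I would handle it by a cluster exploration argument in the spirit of~\cite{KN11}, in which $\cC(0; B(0,r))$ is revealed sequentially and each newly discovered pioneer is tested for regularity using Claim~\ref{claim:Tsloc} applied to the residual randomness; the geometric (rather than conditional) nature of our definition of regularity ensures that these estimates can be applied uniformly across pioneers. A pigeonhole over the $O(\log M)$ dyadic scales in $[K, S]$ then identifies a single scale $s$ at which at least $M / \log M$ pioneers fail $\cT_s$, so in particular at least that many satisfy $|\cC_r(0) \cap Q_s(x)| > s^2(\log s)^7$ or $|\cC_r(0) \cap B(x,s)| > s^4(\log s)^7$. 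Theorem~\ref{thm.LD} (for the boundary condition) together with the Aizenman--Newman tree-graph bound (for the interior condition), applied in combination with the exploration, produces the required tail. The technical subtlety is twofold: avoiding double counting when a single pioneer witnesses irregularity at multiple scales, and using the global constraint $X_r \ge M$ to convert the bare $1/M$ Markov factor into the sub-exponential factor $\exp(-c(\log M)^4)$.
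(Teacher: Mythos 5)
Your proposal correctly identifies the overall skeleton of the paper's argument: pass to $X_r^{K-\irr}$, split the witness scales at a threshold $S = M^\alpha$, handle $s > S$ by a first-moment union bound, and handle $s \le S$ by an exploration argument in the style of~\cite{KN11} combined with the large-deviations input from Theorem~\ref{thm.LD}, Claim~\ref{claim:Tsloc}, and Aizenman--Newman. The large-scale union bound you wrote is correct and matches the paper (which takes $S = M^{1/(8d^2)}$). However, the small-scale case is precisely where the proof lives, and your proposal stops short of actually proving it.

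Two concrete gaps. First, you pigeonhole onto a scale at which many pioneers fail the \emph{global} condition $\cT_s(x)$ and then propose to feed this into the exploration. But the exploration argument fundamentally requires the \emph{local} condition $\cT_s^\loc(x)$ (events determined by the configuration in $B(x,s^d)$), because only local badness decorrelates from the previously explored region; it is exactly for this reason that the paper introduces $\cT_s^\loc$, proves $\cT_s^\loc(x) \Rightarrow \cT_s(x)$ (Claim~\ref{cl:locgoodgood}), and works throughout with $X_r^{s-\loc-\bad}$. Your decomposition as stated cannot be closed. Second, and more substantively, you flag ``converting the bare $1/M$ Markov factor into $\exp(-c(\log M)^4)$'' as a technical subtlety to be handled, but this is the core of the proof and your proposal offers no mechanism. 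The paper does not use Markov's inequality here at all: it tiles $\partial B(0,r)$ into boxes of side $\asymp s^d$, runs a sequential exploration with stopping times $\sigma_i$, shows via Lemma~\ref{lem:connectioninthebox} and Claim~\ref{claim:Tsloc} that each newly touched boundary box is $s$-locally bad with conditional probability $\lesssim \exp(-c(\log s)^4/2)$ given the past, and then stochastically dominates the count $N_s(w)$ of bad boxes by a Binomial and applies Azuma to get a tail $\exp(-\kappa M/s^{2d^2+4})$ for each fixed $s\le S$. Summing over $s$ then produces $\exp(-\kappa\sqrt M)$, which dominates the target. Your pigeonhole to a single scale with at least $M/\log M$ bad pioneers, without this stochastic-domination-plus-concentration step, does not yield the stated bound; and your ``avoiding double counting'' concern is in fact resolved automatically by the paper's union bound over $s$ rather than by a pigeonhole to a single scale. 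In summary, the proposal is a correct plan of attack but omits the central mechanism and conflates local and global density conditions, both of which are essential.
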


	Before proving the proposition above we state a result from~\cite{KN11} that will be useful in the proof. 
	
	\begin{lemma}{\rm{(\cite[Lemma 1.1]{KN11})}}\label{lem:connectioninthebox}
		There exist two positive constants $c$ and $C$ so that for all $r$ and all $z\in \partial \ball{0}{r}$ we have 
		\[
		\pr{0\stackrel{\ball{0}{r}}{\longleftrightarrow}z} \geq c\cdot \exp(-C(\log r)^2).
		\]
	\end{lemma}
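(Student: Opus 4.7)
The plan is to carry out a dyadic multi-scale concatenation along the line segment from $0$ to $z$. I first set $k=\lceil \log_2 r\rceil$ and pick intermediate points $z_i = (1-2^{-i})z$ for $i=0,\dots,k$, so that $z_0=0$, $z_k$ lies within constant distance of $z$, and $\|z_{i+1}-z_i\|\asymp r/2^{i+1}$. For each $i$, I choose a box $D_i \subset \ball{0}{r}$ containing the segment $[z_i,z_{i+1}]$, of side length comparable to $r/2^i$, positioned so that both $z_i$ and $z_{i+1}$ sit well inside the interior of $D_i$ (at distance of order $r/2^i$ from $\partial D_i$). Such $D_i$ can be chosen as tubes around the segment; they exist inside $\ball{0}{r}$ because the endpoints $z_i$ march inwards along the segment from $0$ to $z$.

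The core per-scale estimate I aim to establish is
\[
\pr{z_i \stackrel{D_i}{\longleftrightarrow} z_{i+1}} \gtrsim (r/2^i)^{2-d}.
\]
The unrestricted two-point function~\eqref{twopoint} provides $\tau(z_{i+1}-z_i)\asymp (r/2^i)^{2-d}$, so it suffices to upper bound the probability that every open path from $z_i$ to $z_{i+1}$ exits $D_i$. By the BK inequality~\eqref{BKineq} applied to a union bound over the exit point $w\in\partial D_i$, that probability is at most
\[
\sum_{w\in\partial D_i} \tau(w-z_i)\,\tau(z_{i+1}-w),
\]
which under $d>6$ and the good separation $\|z_i-w\|,\|z_{i+1}-w\|\gtrsim r/2^i$ is of strictly smaller order than $\tau(z_{i+1}-z_i)$, so can be absorbed with a constant factor; alternatively the half-space two-point bound $\tau_{\cH}(x,y)\asymp (1+r_{x,y})(1+r_{y,x})/(1+\|x-y\|^d)$ used in Section~\ref{sec.LD} delivers the same conclusion.

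Once the per-scale bound is in hand, I glue the events $E_i = \{z_i \stackrel{D_i}{\longleftrightarrow} z_{i+1}\}$ together. Each $E_i$ is increasing, so by the Harris--FKG inequality~\eqref{FKGineq},
\[
\pr{0\stackrel{\ball{0}{r}}{\longleftrightarrow}z_k} \;\geq\; \pr{\bigcap_{i=0}^{k-1} E_i} \;\geq\; \prod_{i=0}^{k-1} \pr{E_i} \;\gtrsim\; \prod_{i=0}^{k-1} c\,(r/2^{i+1})^{2-d}.
\]
An additional constant-scale connection from $z_k$ to $z$ (which has a positive-constant probability, e.g.\ by direct use of~\eqref{twopoint} and the argument above applied once at scale $O(1)$) finishes the chain. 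Taking logarithms, the product becomes $(2-d)\sum_{i=0}^{k-1}\log(r/2^{i+1}) = (2-d)(k\log r - \Theta(k^2))$, which with $k\asymp \log r$ yields the exponent $-C(\log r)^2$, as desired.

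The main obstacle is the per-scale restricted two-point lower bound, because~\eqref{twopoint} is stated in full space. One must show that paths exiting $D_i$ contribute only a lower-order term; the elementary BK/convolution route sketched above suffices cleanly in our regime $d>6$, but if one wants a sharp constant one can instead invoke the quantitative half-space estimates of~\cite{CH20,CHS23,Romain}. Everything else is bookkeeping: ensuring the boxes $D_i$ fit inside $\ball{0}{r}$ and that the intermediate points stay on the segment, both of which are immediate from the dyadic geometry.
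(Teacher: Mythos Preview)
Your per-scale subtraction argument does not work. Writing $L\asymp r/2^i$ for the side-length of $D_i$, every $w\in\partial D_i$ satisfies $\|w-z_i\|,\|w-z_{i+1}\|\asymp L$ and $|\partial D_i|\asymp L^{d-1}$, so
\[
\sum_{w\in\partial D_i}\tau(w-z_i)\,\tau(z_{i+1}-w)\ \asymp\ L^{d-1}\cdot L^{2(2-d)}\ =\ L^{3-d},
\]
while the main term is $\tau(z_{i+1}-z_i)\asymp L^{2-d}$. The correction is therefore \emph{larger} than the main term by a factor of order $L$, for every $d$, and the difference gives no lower bound at all. Enlarging $D_i$ does not rescue this: for the boundary sum to drop below $\tfrac12\tau(z_{i+1}-z_i)$ the side of $D_i$ would have to exceed a constant times $L^{(d-2)/(d-3)}$, which at the top scale $i=0$ (where $L\asymp r$) cannot fit inside $B(0,r)$. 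Your fallback to the half-space two-point estimate from Section~\ref{sec.LD} does not help either: item~1 of Lemma~\ref{lem.LD} is an \emph{upper} bound $\tau_{\mathcal H}\le Ch$, not a lower bound on restricted connection probabilities.

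The paper gives no proof of this lemma; it is quoted directly from~\cite[Lemma~1.1]{KN11}. The argument in~\cite{KN11} also uses a dyadic FKG chaining along the segment, but the per-scale lower bound on $\mathbb P(z_i\stackrel{B(0,r)}{\longleftrightarrow}z_{i+1})$ is obtained by a different mechanism than subtracting a boundary term from the full-space two-point function. That subtraction is exactly the gap in your sketch; everything else (choice of dyadic points, FKG gluing, final $(\log r)^2$ bookkeeping) is fine once a valid per-scale bound of size at least $r^{-C}$ is supplied.
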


	\begin{proof}[\bf Proof of Proposition~\ref{thm:linegoodpoints}]
		Writing  $X_r^{s-\loc-\bad}$ for the number of $s$-locally bad points and using
		Claim~\ref{cl:locgoodgood} we have		
		\[
		X_r^{K-\irr} \leq \sum_{s\geq K} X_r^{s-\loc-\bad},
		\]
		and hence if $X_r^{K-\irr}\geq X_r/2$, then by taking $K$ large enough, this implies that there exists $s\geq K$ such that 
		\[
		X_r^{s-\loc-\bad} \geq \frac{X_r}{ s^2}.
		\]	
		So by a union bound we get 
		\[
		\pr{X_r\geq M,\  X_r^{K-\reg } \le \frac{X_r}{2}} \leq \sum_{s\geq K} \pr{X_r\geq M, \ X_r^{s-\loc-\bad} \geq \frac{X_r}{s^2}}.
		\]
		To control this last probability we will follow the same exploration procedure as in~\cite{KN11}. More precisely, for every $w\in \cU:=\{u\in \Z^d: u_i\in \{0,s^d\} \ \forall \ i=1,\ldots, d\}$, we subdivide the box $\ball{0}{r}$ into the boxes of the partition
		\[
		\mathcal B(w) = \{ \ball{0}{r}\cap \ball{z}{2s^d}: z\in w+4s^d \cdot \Z^d\}.
		\]
		We now fix $w$ and drop the dependence on $w$ from the notation to simplify the presentation. We fix an arbitrary order on the boxes of $\mathcal B(w)$, and  first explore the percolation configuration inside the boxes that do not touch $\partial \ball{0}{r}$. Using this configuration we pick the first box in the ordering that touches the boundary and which is connected to $0$ using the explored region, we reveal the percolation configuration inside the box and add it to the explored region.
		Then we pick the next unexplored box in the ordering which is connected to the origin in the explored region and continue until there are no more unexplored boxes connected to the origin. Note that for each explored box touching $\partial \ball{0}{r}$, the probability that this box contains a crossing from the cluster of the origin in the previously explored region up to
		a point $x\in \partial \ball{0}{r}$ such that $B(x,s^d)\cap B(0,r)$ is contained in the box, is lower bounded by $ce^{-C(\log s)^2}$, by Lemma~\ref{lem:connectioninthebox}. For simplicity if the box contains such crossing, we will just say that the box has a crossing to $\partial B(0,r)$.  
		Let $\cF_i$ be the percolation configuration of the explored region after adding the $i$-th box. 
		
		A box $q$ is called $s$-locally bad if it contains an $s$-locally bad vertex $x\in \partial \ball{0}{r}$ with $\ball{x}{s^d}\cap \ball{0}{r}\subset q$.  
		Let $q_1, q_2, \ldots$ be the sequence of explored boxes. Now let $\til{q}_1,\ldots$ be the subset of the collection of boxes that contain a crossing to $\partial\ball{0}{r}$.
		To be more rigorous, we define a sequence of stopping times by setting $\sigma_0=0$ and inductively we set for $i\geq 0$
		\[
		\sigma_{i+1}=\inf\{ k\ge \sigma_i+1: q_k \text{ has a crossing to } \partial \ball{0}{r}  \}. 
		\]
		If $\sigma_i<\infty$, we let $\til{q}_i=q_{\sigma_i}$  and $\til{\cF}_i=\cF_{\sigma_{i-1}}$, for every $i\geq 1$. 
		We then have 
		\begin{align*}
			\prcond{\til{q}_i \text{ is $s$-locally bad}}{\til{\cF}_i}{} =\sum_{\ell,k} \prcond{q_{\ell+k} \text{ is $s$-locally bad, } \sigma_{i-1}=\ell, \sigma_i-\sigma_{i-1}=k}{\til{\cF}_i}{}.
		\end{align*}
		We let $\tau_1,\ldots, \tau_k$ be the indices of the first,\ldots, $k$-th box that are explored after having revealed all the boxes up to time $\sigma_{i-1}$. We then have 
		\begin{align*}
			&\prcond{q_{\ell+k} \text{ is $s$-locally bad, }  \sigma_{i-1}=\ell, \sigma_i-\sigma_{i-1}=k}{\til{\cF}_i}{} \\
			&=\sum_{i_1,\ldots, i_k} \prcond{q_{\ell+k} \text{ is $s$-locally bad, } \sigma_{i-1}=\ell, \sigma_i-\sigma_{i-1}=k, \tau_1=i_1,\ldots, \tau_k=i_k}{\til{\cF}_i}{}\\
			&\le \sum_{i_1,\ldots, i_k}
			\prcond{q_{\ell +k} \text{ is $s$-locally bad, }\sigma_{i-1}=\ell, \sigma_i-\sigma_{i-1}>k-1, \tau_1=i_1,\ldots, \tau_k=i_k }{\til{\cF}_i}{}  \\
			&=\sum_{i_1,\ldots, i_k}\pr{q_{\ell +k} \text{ is $s$-locally bad}}\cdot \prcond{\sigma_{i-1}=\ell, \sigma_i-\sigma_{i-1}>k-1, \tau_1=i_1,\ldots, \tau_k=i_k}{\til{\cF}_i}{}, 
		\end{align*} 
		where for the last equality we use that the event that $q_{\ell +k}$ is $s$-locally bad is independent of the rest by definition. We therefore deduce
		\begin{align*}
			&\prcond{q_{\ell+k} \text{ is $s$-locally bad, }  \sigma_{i-1}=\ell, \sigma_i-\sigma_{i-1}=k}{\til{\cF}_i}{}  
			\\&\le  \pr{q_{\ell +k} \text{ is $s$-locally bad}}\cdot \prcond{\sigma_{i-1}=\ell, \sigma_i-\sigma_{i-1}>k-1}{\til{\cF}_i}{} .
		\end{align*}
		Now Lemma~\ref{lem:connectioninthebox} yields
		\begin{align*}
			\prcond{\sigma_{i-1}=\ell, \sigma_i-\sigma_{i-1}>k-1}{\til{\cF}_i}{}  \leq \prcond{\sigma_{i-1}=\ell}{\til{\cF}_i}{}  \cdot \left(1-c\cdot \exp(-C(\log s)^2) \right)^{k-1}.
		\end{align*}
		By a union bound and 
		Claim~\ref{claim:Tsloc}, we also get 
		\begin{align*}
			\pr{q_{\ell +k} \text{ is $s$-locally bad}}& \lesssim s^{d^2}\cdot\max_{x\in \partial B(0,r)}  \left(1-\pr{\cT_s^\loc(x)}\right) \lesssim \exp(-c(\log s)^4),
		\end{align*}
		for a positive constant $c$. 
		Hence, putting all pieces together and summing over all possible values of $k$ and $\ell$, gives
		\begin{align}\label{eq:uniformboundonbad}
			\prcond{\til{q}_i \text{ is $s$-locally bad}}{\til{\cF}_i}{}\lesssim \exp(C(\log s)^2-c(\log s)^4) \lesssim \exp(-c(\log s)^4/2).
		\end{align} 
		Now for $w\in \mathcal U$, we let $N(w)$ be the total number of explored boxes of the partition $\mathcal B(w)$ which have a crossing to $\partial B(0,r)$, and let $N_s(w)$ be the number of explored boxes which have a crossing to $\partial B(0,r)$ and are $s$-locally bad. 
		Note that for every $x\in \partial \ball{0}{r}$ there exists $w\in \cU$ and $q\in \mathcal B(w)$ such that $\ball{x}{s^d}\cap \ball{0}{r}\subseteq q$.
		We then have deterministically 
		\[
		X_r^{s-\loc-\bad}\le 2^ds^{d^2}\cdot \sum_{w\in \cU}N_s(w). 
		\]
		Using the above inequality and a union bound we then get 
		\begin{equation*}
			\pr{X_r\geq M,  X_r^{s-\loc-\bad}\geq \frac{X_r}{s^2}} \leq 2^d\cdot \max_{w\in \cU} \pr{X_r\geq M, N_s(w)\geq \frac{X_r}{2^{2d}\cdot s^{d^2+2}}}. 
		\end{equation*}
		Now, using~\eqref{eq:uniformboundonbad} we can upper bound $N_s(w)$ by a binomial random variable with parameters $N(w)$ and $\exp(-c(\log s)^4/2)$, independently of $N(w)$ (and $X_r$). 
		Using moreover, that 
		$$\max_{w\in \cU} N(w) \le X_r,$$ 
		and large deviations for Binomial random variables (e.g. Azuma inequality) we can deduce that for $s\ge K$, and $K$ large enough, 
		\begin{align}\label{LDsloc}
			\nonumber				\pr{X_r\geq M, \ X_r^{s-\loc-\bad}\geq \frac{X_r}{s^2}} & \le 2^d \cdot  \max_{w\in \cU}\ \mathbb E\Big[\exp\big(-\kappa\cdot \frac{X_r^2}{s^{2d^2+4}\cdot N(w)}\big) \cdot \1(X_r\ge M)\Big]\\
			& \le 2^d \cdot \exp(-\kappa\cdot M \cdot s^{-2d^2-4}),
		\end{align}			
		for some constant $\kappa>0$.
		Taking the sum over all $s\in [K,M^{1/(8d^2)}]$ we get 
		\begin{equation}\label{sumssmall}
			\sum_{s=K}^{M^{1/(8d^3)}}			\pr{X_r\geq M, \ X_r^{s-\loc-\bad}\geq \frac{X_r}{s^2}} \lesssim \exp(-\kappa\sqrt{M}). 	
		\end{equation}
		For $s\geq M^{1/(8d^2)}$ we  simply bound, using Claim~\ref{claim:Tsloc},  
		\begin{align*}
			\pr{X_r\geq M, X_r^{s-\loc-\bad} \geq \frac{X_r}{s^2}} & \leq \pr{X_r^{s-\loc-\bad}\geq 1} \lesssim  r^{d-1} \max_{x\in \partial B(0,r)} \pr{(\cT_s^\loc(x))^c } \\
			& \lesssim r^{d-1}\cdot \exp(-c(\log s)^4).
		\end{align*}
		Finally summing over  $s\geq M^{1/(8d^2)}$, and using~\eqref{sumssmall}, yields the desired result. 
	\end{proof}

	\begin{remark}\label{rem:newdefscubed}
		\rm{
		Using Remark~\ref{rem:scubed} we see that the statement and proof of Proposition~\ref{thm:linegoodpoints} remain unchanged if we consider the events $\til{\cT}_s$ and $\til{\cT}_s^\loc$ instead of $\cT_s$ and $\cT_s^\loc$ respectively.
		}
	\end{remark}

	
	\section{One arm exponent and hitting probabilities for the IIC}\label{sec:hitting}

	In this section we prove Theorem~\ref{thm.IIC} and give an alternative proof of the main ingredient of the proof of the one arm exponent of~\cite{KN11} for dimensions $d\geq 8$ under~\eqref{twopoint}.

	\subsection{Preliminaries}
	
We start by proving a result, which states that the $(d-4)$-capacity of a sufficiently sparse set is comparable to its volume. 
	
	\begin{claim}\label{cl:lowerboundoncapacity}
		Let $C$ be a positive constant, $d\geq 5$ and $\alpha<d-4$. There exists $c>0$, so that if  $A\subseteq \Z^d$ is a finite set satisfying that for all $x\in A$ we have 
		\[
		|\ball{x}{s}\cap A|\leq C s^{\alpha}, \quad  \forall  s>0,
		\]
		then 
		\[
		C_{d-4}(A) \geq c\cdot |A|.
		\]
	\end{claim}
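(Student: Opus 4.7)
The plan is to apply the variational formula~\eqref{var.C4} directly with the uniform probability measure on $A$. Set $\mu(x) = 1/|A|$ for $x \in A$, so that
\[
\text{Cap}_{d-4}(A)^{-1} \le \frac{1}{|A|^2} \sum_{x,y \in A} (1+\|y-x\|)^{4-d}.
\]
The goal is then to show the inner double sum is $O(|A|)$, which would immediately yield $\text{Cap}_{d-4}(A) \gtrsim |A|$.

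For this, I would fix $x \in A$ and bound $\sum_{y \in A} (1+\|y-x\|)^{4-d}$ uniformly in $x$ using a standard dyadic decomposition. Split $A$ into annuli $A_j = \{y \in A : 2^j \le 1+\|y-x\| < 2^{j+1}\}$ for $j \ge 0$. On each annulus the summand is at most $2^{j(4-d)}$, while the sparsity hypothesis $|B(x,s) \cap A| \le C s^{\alpha}$ applied with $s = 2^{j+1}$ gives $|A_j| \le C \cdot 2^{(j+1)\alpha}$. Hence
\[
\sum_{y \in A} (1+\|y-x\|)^{4-d} \le C \cdot 2^{\alpha} \sum_{j \ge 0} 2^{j(4-d+\alpha)}.
\]
Since $\alpha < d-4$, the exponent $4-d+\alpha$ is strictly negative, and the geometric series converges to some finite constant $C'$ depending only on $C$, $\alpha$, and $d$.

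Summing over $x \in A$ yields $\sum_{x,y \in A} (1+\|y-x\|)^{4-d} \le C' \cdot |A|$, so plugging back into the variational bound gives $\text{Cap}_{d-4}(A)^{-1} \le C'/|A|$, i.e.\ $\text{Cap}_{d-4}(A) \ge c \cdot |A|$ with $c = 1/C'$. There is no real obstacle here: the argument is essentially a volume-versus-kernel calculation, and the strict inequality $\alpha < d-4$ is precisely what makes the dyadic sum converge. The only minor care needed is to note that the sparsity bound is assumed only for $x \in A$ (not all $x \in \mathbb{Z}^d$), but since $x$ in the outer sum does range over $A$, this is exactly what we need.
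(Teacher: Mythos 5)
Your proof is correct and follows exactly the same approach as the paper: apply the variational formula~\eqref{var.C4} with the uniform measure on $A$, then show $\sum_{x,y\in A}(1+\|y-x\|)^{4-d}\lesssim|A|$ via a dyadic decomposition into annuli, using the sparsity hypothesis to bound the number of points in each annulus and the condition $\alpha<d-4$ to make the geometric series converge.
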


	\begin{proof}[\bf Proof]
		
		By taking $\nu$ to be the uniform measure on $A$ in the definition~\eqref{var.C4} of $C_{d-4}$, we get, with $G(x,y)=\frac 1{(1+\|x-y\|)^{d-4}}$,  
		\begin{align*}
			C_{d-4}(A)\geq \frac{|A|^2}{\sum_{x,y\in A} G(x,y)}.
		\end{align*}
		Hence, to prove the claim it suffices to show that 
		\[
		\sum_{x,y\in A} G(x,y) \lesssim |A|. 
		\]
		Setting $B(x,1/2)=\{x\}$, we now have 
		\begin{align*}
			\sum_{x,y\in A} G(x,y) &=\sum_{x\in A} G(x,x) + \sum_{x\in A} \sum_{t=0}^{\infty} \sum_{y\in A\cap  \ball{x}{2^t}\setminus \ball{x}{2^{t-1}}} G(x,y) \\&\lesssim |A|+\sum_{x\in A} \sum_{t=0}^{\infty} \frac{|\ball{x}{2^t}\cap A| }{2^{t(d-4)}} 
			\lesssim |A| + \sum_{x\in A}\sum_{t=0}^{\infty} \frac{2^{\alpha t}}{2^{t(d-4)}} \asymp |A|,
		\end{align*}
		and this completes the proof, recalling that $d\geq 5$ and  $\alpha<d-4$, by assumption. 
	\end{proof}
	
	Another standard and useful fact is the following. 
	
	\begin{claim}\label{claim.G}
		Let $G(x,y) = \frac 1{(1+\|x-y\|)^{d-4}}$. Then 
		$$\sum_{u\in \mathbb Z^d} \tau(x-u)\tau(u-y) \asymp G(x,y). $$ 
	\end{claim}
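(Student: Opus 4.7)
The plan is to reduce the claim to the well-known convolution estimate for power-law kernels on $\mathbb{Z}^d$. By~\eqref{twopoint} and the fact that $\tau(0)=1$ and $\tau(z)\le 1$, together with irreducibility of the underlying graph (which implies $\tau(z)\ge c$ on any finite neighborhood of the origin), we get the two-sided bound
\begin{equation*}
\tau(z) \asymp (1+\|z\|)^{2-d} \qquad \text{uniformly in } z\in\mathbb{Z}^d.
\end{equation*}
Thus it suffices to prove that $\sum_{u\in\mathbb{Z}^d}(1+\|x-u\|)^{2-d}(1+\|u-y\|)^{2-d}\asymp (1+\|x-y\|)^{4-d}$. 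By translation invariance we may assume $y=0$, and by setting $r = 1+\|x\|$ we reduce to showing that $S(x):=\sum_u(1+\|x-u\|)^{2-d}(1+\|u\|)^{2-d} \asymp r^{4-d}$.

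For the upper bound, I would split the range of $u$ into three regions and estimate each separately: (i) $\|u\|\le r/2$, on which $\|x-u\|\asymp r$, so the contribution is at most a constant times $r^{2-d}\sum_{\|u\|\le r/2}(1+\|u\|)^{2-d}\asymp r^{2-d}\cdot r^2=r^{4-d}$; (ii) $\|u-x\|\le r/2$, which by symmetry gives the same bound; and (iii) the ``far'' region where both $\|u\|$ and $\|u-x\|$ are at least $r/2$. In region (iii) I would perform a dyadic decomposition $\|u\|\in[2^kr,2^{k+1}r)$ for $k\ge 0$: the shell has volume $\asymp (2^kr)^d$, on it $\|u-x\|\asymp 2^kr$ (for $k\ge 1$, and also for $k=0$ by the region constraint), and the integrand is $\asymp (2^kr)^{4-2d}$, so the contribution is $\asymp (2^kr)^{4-d}$. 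Summing the geometric series $\sum_{k\ge 0}(2^k r)^{4-d}$ converges (since $d>4$) and gives $\asymp r^{4-d}$.

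For the lower bound it suffices to use region (i): restricting to $\|u\|\le r/4$ where $\|x-u\|\le 2r$, we get $S(x)\ge c\cdot r^{2-d}\sum_{\|u\|\le r/4}(1+\|u\|)^{2-d}\ge c'r^{4-d}$, again because $\sum_{\|u\|\le R}(1+\|u\|)^{2-d}\asymp R^2$ for $d>4$. Combining the upper and lower bounds yields $S(x)\asymp r^{4-d}=(1+\|x\|)^{4-d}\asymp G(x,0)$, which after undoing the translation is exactly the claim.

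This is a routine convolution estimate; the only genuinely delicate point is ensuring the two-sided bound $\tau(z)\asymp(1+\|z\|)^{2-d}$ holds at all scales (the stated asymptotic~\eqref{twopoint} only gives this in the limit $\|z\|\to\infty$), but this extends uniformly using $\tau(0)=1$, monotonicity/positivity of $\tau$, and finite-range one-step connection probabilities at $p_c$. No special obstacle arises beyond careful bookkeeping of the dyadic shells.
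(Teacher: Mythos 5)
Your proof is correct and is exactly the "standard computation" that the paper invokes without detail: upgrade the asymptotic~\eqref{twopoint} to the uniform two-sided bound $\tau(z)\asymp(1+\|z\|)^{2-d}$ and then verify the power-law convolution estimate by dyadic decomposition. Since the paper's proof of this claim consists of the single sentence "The proof immediately follows from~\eqref{twopoint} and a standard computation," there is no difference in approach to compare.
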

	\begin{proof}[\bf Proof] The proof immediately follows from~\eqref{twopoint} and a standard computation. 
	\end{proof}
	
	We now prove a lower bound on the probability to hit a set, which also gives  Proposition~\ref{prop:lowercap} as a corollary, as we explain immediately after the proof of the lemma below. 
	\begin{lemma}\label{lem:hitdiameterdistance}
		For any positive constant $c$ there exists $c'>0$, such that the following holds.
		Let $A\subseteq \Z^d$ be a finite set and let $x\in \Z^d$ satisfy $ d(x,A)\ge c \cdot \max_{a\in A} \|a\|$. Then 
		\[
		\pr{x\longleftrightarrow A} \ge c'\cdot  (d(x,A))^{2-d} \cdot C_{d-4}(A).
		\]
	\end{lemma}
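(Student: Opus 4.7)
The plan is to use the second moment method based on an optimal probability measure for the $(d-4)$-capacity, following Fitzsimmons--Salisbury. Let $\mu$ be a probability measure supported on $A$ that realises (or nearly realises) the infimum in the variational formula~\eqref{var.C4}, so that $\sum_{a,b\in A}\mu(a)\mu(b)(1+\|b-a\|)^{4-d} \asymp 1/C_{d-4}(A)$. Set
\[
X=\sum_{a\in A}\mu(a)\mathbf 1\{a\longleftrightarrow x\},
\]
and note $\pr{x\longleftrightarrow A}\ge \pr{X>0}\ge (\E{X})^2/\E{X^2}$ by Cauchy--Schwarz.

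For the first moment, observe that the hypothesis $d(x,A)\ge c\max_{a\in A}\|a\|$ implies $d(x,A)\le\|x-a\|\le (1+2/c)\,d(x,A)$ for every $a\in A$. Thus~\eqref{twopoint} gives $\tau(x-a)\asymp d(x,A)^{2-d}$ uniformly in $a\in A$, and consequently $\E{X}=\sum_a\mu(a)\tau(x-a)\gtrsim d(x,A)^{2-d}$.

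The main step, and the one requiring the most care, is the second moment bound. By the Aizenman--Newman tree-graph inequality,
\[
\pr{a\longleftrightarrow x,\ b\longleftrightarrow x}\le \sum_{u\in\Z^d}\tau(a-u)\tau(b-u)\tau(x-u).
\]
I would split the inner sum according to whether $\|x-u\|\ge d(x,A)/2$ or not. On the first region, $\tau(x-u)\lesssim d(x,A)^{2-d}$ is pulled out, and what remains is bounded using Claim~\ref{claim.G} by $G(a,b)=(1+\|a-b\|)^{4-d}$. On the complementary region $\|x-u\|<d(x,A)/2$, the triangle inequality forces $\|a-u\|,\|b-u\|\ge d(x,A)/2$, so both $\tau(a-u)$ and $\tau(b-u)$ are $\lesssim d(x,A)^{2-d}$; the remaining sum $\sum_{\|v\|<d(x,A)/2}\tau(v)$ is of order $d(x,A)^2$. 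This produces a term of size $d(x,A)^{6-2d}=d(x,A)^{2-d}\cdot d(x,A)^{4-d}$, which is absorbed into $d(x,A)^{2-d}G(a,b)$ because $\|a-b\|\le 2\max\|c\|\le 2d(x,A)/c$ forces $G(a,b)\gtrsim d(x,A)^{4-d}$. Together, these give
\[
\E{X^2}\lesssim d(x,A)^{2-d}\sum_{a,b\in A}\mu(a)\mu(b)G(a,b)\asymp \frac{d(x,A)^{2-d}}{C_{d-4}(A)}.
\]

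Combining the two bounds yields
\[
\pr{x\longleftrightarrow A}\ge \frac{(\E{X})^2}{\E{X^2}}\gtrsim \frac{d(x,A)^{2(2-d)}}{d(x,A)^{2-d}/C_{d-4}(A)}=d(x,A)^{2-d}\cdot C_{d-4}(A),
\]
which is the claimed inequality. Proposition~\ref{prop:lowercap} is then an immediate corollary upon taking $\|z\|\to\infty$ and using $\tau(z)\asymp\|z\|^{2-d}$. The only subtlety to watch is ensuring that the constant absorbed in the comparison $d(x,A)^{4-d}\lesssim G(a,b)$ is indeed uniform in~$A$; this is where the quantitative hypothesis $d(x,A)\ge c\max_{a\in A}\|a\|$ plays its key role, and it is why the constant $c'$ in the conclusion is allowed to depend on $c$.
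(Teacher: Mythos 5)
Your proof is correct and is essentially the same second-moment argument as the paper's, down to the use of the tree-graph/BK inequality, the split of the $u$-sum at scale $d(x,A)$, and the observation that the diagonal term $L^{6-2d}$ is absorbed because $G(a,b)\gtrsim d(x,A)^{4-d}$ under the hypothesis. The only cosmetic difference is that you use the unweighted random variable $X=\sum_a\mu(a)\mathbf 1\{a\longleftrightarrow x\}$, whereas the paper normalizes each summand by $\tau(x-a)$ to make $\mathbb E[Z]=1$; since $\tau(x-a)\asymp d(x,A)^{2-d}$ uniformly in $a\in A$, the two formulations are equivalent up to constants.
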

	
	\begin{proof}[\bf Proof]
		Let $L=d(x,A)$ and let $\nu$ be a probability measure supported on $A$. Define 
		\[
		Z=\sum_{a\in A} \nu(a) \frac{\1(x\longleftrightarrow a)}{\tau(x-a)}.
		\]
		By Cauchy-Schwarz inequality we have 
		\begin{align*}
			\pr{x\stackrel{}{\longleftrightarrow}A}\geq \pr{Z>0} \geq \frac{(\E{Z})^2}{\E{Z^2}}.
		\end{align*}
		For the first moment of $Z$ we get
		\[
		\E{Z} = \sum_{a\in A} \nu(a) \frac{\tau(x-a)}{\tau(x-a)} = 1.
		\]
		Since $d(x,A)\geq c\cdot \max_{a\in A} \|a\| $, it follows that $\|x-a\|\asymp \|x-b\| \asymp L$, and hence for the second moment of $Z$ we have, using~\eqref{twopoint}, 
		\begin{align*}
			\E{Z^2} =\sum_{a,b\in A} \nu(a) \nu(b)\frac{ \pr{a\longleftrightarrow x,a\longleftrightarrow b}}{\tau(x-a)\tau(x-b)} \asymp L^{2d-4}\cdot  \sum_{a,b\in A} \nu(a) \nu(b) \pr{a\longleftrightarrow x, a\longleftrightarrow b}. 
		\end{align*}
		Let $C$ be a sufficiently large constant. By the BK inequality~\eqref{BKineq} we get, using Claim~\ref{claim.G}, 
		\begin{align*}
			& \pr{a\longleftrightarrow x, x\longleftrightarrow b}\leq \sum_{z} \tau(a-z) \tau(z-x) \tau(z-b)\\
			& = \sum_{\|z-x\|\leq \frac{L}{C}}\tau(a-z) \tau(z-x) \tau(z-b) + \sum_{\|z-x\|>\frac{L}{C}} \tau(a-z) \tau(z-x) \tau(z-b)  \\
			&\asymp L^{4-2d} \cdot \sum_{\|z-x\|\leq \frac{L}{C}}\tau(z-x) + L^{2-d} \cdot G(a,b) \asymp L^{6-2d} + L^{2-d} G(a,b). 
		\end{align*}
		Substituting this above gives
		\[
		\E{Z^2}\lesssim L^{2}+ L^{d-2}\cdot \sum_{a,b\in A} \nu(a) \nu(b) G(a,b),
		\]
		and hence combining everything this implies
		\[
		\pr{x\longleftrightarrow A}\geq 	\pr{Z>0}\geq \frac{1}{L^2 + L^{d-2} \cdot \sum_{a,b\in A} \nu(a) \nu(b) G(a,b)}.
		\]
		Since 
		\[
		\sum_{a,b \in A} \nu(a) \nu(b) G(a,b) \gtrsim L^{4-d},
		\]
		we obtain
		\[
		\pr{Z>0} \gtrsim \frac{L^{2-d}}{\sum_{a,b\in A}\nu(a)\nu(b)G(a,b)}.
		\]
		Optimising over the choice of the probability measure $\nu$ proves the claim. 
	\end{proof}

	\begin{proof}[\bf Proof of Proposition~\ref{prop:lowercap}]
		The proof follows immediately from Lemma~\ref{lem:hitdiameterdistance} and Theorem~\ref{thm.defpcap}.
	\end{proof}
	
	We next need to introduce some more notation and definitions. 
	First, recall the notation and definitions of Section~\ref{sec.regpoints}. Then, given $r\ge 1$ and any  connected subgraph $B\subseteq B(0,r)$ containing the origin, we define the set of $K$-regular points of $B$, as the points $x\in B\cap \partial B(0,r)$, such that for all $s\ge K$, 
	$$|B\cap B(x,s)|\le s^4(\log s)^7, \quad \text{and}\quad |B\cap B(x,s)\cap \partial B(0,r)|\le s^2 (\log s)^7.$$ 
	Note that on the event $\mathcal C(0;B(0,r))= B$, these are exactly the set of $K$-regular points as given in Definition~\ref{def.Kreg}. Among them we fix arbitrarily a subset of maximal cardinality whose points are all within distance at least $2K$ from each other, and call it $\Reg{B}$. Now for each point $x\in \Reg{B}$, we consider the line segment of length $K$ emanating from $x$ outside $B(0,r)$ orthogonally to the boundary of the cube (or if $x$ belongs to more than one face, orthogonally to one of them chosen arbitrarily), and call it $L_x$. We also call (with a slight abuse of notation) $x'$ the endpoint of this line, which belongs to $\partial B(0,r+K)$ by definition.  
	For each $x\in \Reg{B}$, we consider the maximal line segment starting from $x$ and contained in $L_x$ formed by open edges, and denote by $L(B)$ the union of all these line segments (note that this is a random set) and we are abusing notation, since $L(B)$ depends also on the configuration outside of $B$. A point $x\in \Reg{B}$ is said to be $K$-line-good, if all the edges of $L_x$ are open (or equivalently if $x'\in L(B)$). We write $\text{Reg}'(B)$ for the set of such $x'$, and let $X_r^{K-\line}(B)=|\text{Reg}'(B)|$ be the number of $K$-line-good points of $B$.  
	We also let 
	$$X_r(B) = B\cap \partial B(0,r),$$
	be the set of pioneer points corresponding to $B$. We now define the extended cluster of the origin as $$\ce{\ball{0}{r}} = \mathcal C(0;B(0,r))\cup L\big(\mathcal C(0;B(0,r))\big),$$   
	and write $X_r^{K-\line}$ for the number of $K$-line-good points of $\mathcal C(0;B(0,r))$. 
	We shall need the following simple fact. 
	\begin{claim} 
		\label{claim:Klinegood} For all $K$, there exists a constant $\varepsilon(K)>0$, such that for all positive $r$ and $M$, 
		$$\mathbb P\Big(X_r^{K-\reg} \ge M, X_r^{K-\line}\le \epsilon(K) \cdot X_r^{K-\reg}  \Big)\lesssim  \exp(-\epsilon(K)\cdot M).$$
	\end{claim}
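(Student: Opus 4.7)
\textbf{Proof plan for Claim~\ref{claim:Klinegood}.} The strategy is a simple decoupling: condition on $\cC(0;\ball{0}{r})$ and observe that, given this cluster, each of the regular points is independently line-good with probability $p_c^K$; then apply a Chernoff bound.

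First I would verify the decoupling. By definition $\cC(0;\ball{0}{r})$ is a function of the status of the edges with both endpoints in $\ball{0}{r}$, whereas each segment $L_x$ (for $x\in\Reg{\cC(0;\ball{0}{r})}$) emanates from $x\in\partial \ball{0}{r}$ outward and therefore uses only edges with at least one endpoint strictly outside $\ball{0}{r}$. Thus the edges of $\bigcup_x L_x$ are disjoint from the edges determining the cluster. I would also check that the segments $L_x$ are pairwise edge-disjoint: by construction the elements of $\Reg{\cdot}$ are pairwise at $\ell^\infty$-distance at least $2K$, and each $L_x$ has length $K$ in an axis direction, so two distinct segments cannot share any edge (the only non-trivial case being two regular points on the same face of $\ball{0}{r}$, where the $2K$-separation already rules out edge overlap).

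Consequently, since $\Reg{\cC(0;\ball{0}{r})}$ is a deterministic function of $\cC(0;\ball{0}{r})$, conditional on the cluster the events $\{L_x \text{ is entirely open}\}$ for $x\in\Reg{\cC(0;\ball{0}{r})}$ are independent Bernoulli$(p_c^K)$ variables. In particular, conditionally on $X_r^{K-\reg}=N$, the variable $X_r^{K-\line}$ is stochastically equal to $\mathrm{Bin}(N,p_c^K)$, where $p:=p_c^K>0$ depends only on $K$.

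A standard Chernoff bound then gives, for some constant $c(K)>0$,
$$
\pr{\mathrm{Bin}(N,p)\le (p/2)\,N}\le \exp(-c(K)\,N) \qquad \text{for all } N\ge 1.
$$
Setting $\varepsilon(K):=\min(p/2, c(K))$ and summing over $N\ge M$ against the law of $X_r^{K-\reg}$ yields
$$
\pr{X_r^{K-\reg}\ge M,\ X_r^{K-\line}\le \varepsilon(K)\cdot X_r^{K-\reg}}\le \sum_{N\ge M} \pr{X_r^{K-\reg}=N}\,e^{-c(K)N}\le e^{-\varepsilon(K)\,M},
$$
as claimed. The only subtle point in the whole argument is the edge-disjointness of the segments (needed for genuine independence rather than just positive association), and this is handled by the $2K$-separation built into the definition of $\Reg{\cdot}$; no heavier machinery is needed.
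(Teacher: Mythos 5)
Your argument has a genuine gap: you conflate $X_r^{K-\reg}$, the total number of $K$-regular points on $\partial B(0,r)$, with $|\Reg{\cC(0;B(0,r))}|$, the cardinality of the fixed maximal $2K$-separated subset. The paper defines the line segment $L_x$, and hence the property of being $K$-line-good, only for $x\in\Reg{\cdot}$; thus conditionally on the cluster the number of line-good points is $\mathrm{Bin}(|\Reg{\cdot}|,p_c^K)$, \emph{not} $\mathrm{Bin}(X_r^{K-\reg},p_c^K)$ as you write. In particular the assertion ``conditionally on $X_r^{K-\reg}=N$, the variable $X_r^{K-\line}$ is stochastically equal to $\mathrm{Bin}(N,p_c^K)$'' is false: conditional on $X_r^{K-\reg}=N$ the number of trials is a random quantity lying somewhere between roughly $N/K^d$ and $N$. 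You actually invoke the $2K$-separation when checking edge-disjointness of the segments, so you have the ingredient in hand, but you never use it to compare $|\Reg{\cdot}|$ with $X_r^{K-\reg}$.

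The fix is exactly the one-line observation the paper makes and you omit: by maximality of the $2K$-separated subset, every $K$-regular point lies within distance $2K$ of some point of $\Reg{\cdot}$, so $|\Reg{\cC(0;B(0,r))}|\gtrsim X_r^{K-\reg}/K^d$. With this in place, on the event $\{X_r^{K-\reg}\ge M\}$ one has $|\Reg{\cdot}|\gtrsim M/K^d$, and your Chernoff/Azuma bound applied to $\mathrm{Bin}(|\Reg{\cdot}|,p_c^K)$ then gives the claim with $\varepsilon(K)$ of order $p_c^K/K^d$ (up to constants), rather than the $\varepsilon(K)=\min(p_c^K/2,c(K))$ you propose, which does not carry the necessary $K^{-d}$ factor. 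Once this intermediate step is inserted, the rest of your argument (edge-disjointness via the $2K$-separation, independence of the segment events from the cluster, concentration for binomials) matches the paper's proof.
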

	\begin{proof}[\bf Proof]
		Note that from the definition of 
		$\Reg{B}$, one can see that 
		$$N:=|\Reg{\mathcal C(0;B(0,r))}|\gtrsim  \frac{X_r^{K-\reg}}{K^d}.$$  
		Now conditionally on $N$, the number of $K$-line-good points is a binomial random variable with parameters $N$ and $p_c^K$. Hence the result follows immediately from Azuma's inequality. 
	\end{proof}

	\begin{remark}\label{rem:scubedlinegood}
		\rm{
		Note that if we use the events $\til{\cT}_s$ and $\til{\cT}_s^\loc$ instead of $\cT_s$ and $\cT_s^\loc$ in the definition of regular points, then the statement and proof of Claim~\ref{claim:Klinegood} remain unchanged.
		}
	\end{remark}

		\subsection{Another proof of~\cite[Theorem~2]{KN11}}\label{sec:newproof}

	In this section we present a different proof of Theorem~2 from~\cite{KN11} for $d\geq 8$ and under~\eqref{twopoint}. This is the main ingredient in the proof of the one-arm exponent in~\cite{KN11} (established there as long as~\eqref{twopoint} holds including $d=7$). 
	In this section we use the definitions 
	$\til{\cT}_s$ and $\til{\cT}_s^\loc$ from Remark~\ref{rem:scubed} when considering regular points.

	Define
	\[	 
	A_{r,L} = \left|\{x\in  \ball{0}{r+L}\setminus \ball{0}{r}: z\longleftrightarrow  x\}\right|.
	\]
	\begin{theorem}\label{thm:3}\rm{(\cite[Theorem 2]{KN11})}
		There exists a constant $c>0$ so that for all $\alpha>0$, all $r$ sufficiently large and for all $L\geq r^\alpha$
		\[
		\pr{X_{r}\geq L^2 \  \text{ and } \ A_{r,L}\leq cL^4} \leq (1-c)\cdot \pr{0\longleftrightarrow  \partial B(0,r)}.
		\]
	\end{theorem}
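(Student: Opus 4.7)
The plan is to show that, conditionally on the existence of many pioneers, the cluster of $0$ extends substantially into the annulus $\ball{0}{r+L}\setminus \ball{0}{r}$. First, I would apply Proposition~\ref{thm:linegoodpoints} with $M = L^2$ (using the variant from Remark~\ref{rem:newdefscubed}) together with Claim~\ref{claim:Klinegood} to produce constants $c_1, K>0$ and an event $\cG \subseteq \{X_r \geq L^2\}$ on which the number of $K$-line-good endpoints satisfies $|W| \geq c_1 L^2$, with
\[
\pr{\{X_r \geq L^2\} \setminus \cG} \lesssim r^{d-1} \exp(-c (\log L)^4).
\]
Since $L \geq r^\alpha$, $(\log L)^4 \gg \log r$, so this error is $o(r^{-2}) = o(\pr{0\longleftrightarrow \partial B(0,r)})$ by the one-arm estimate~\eqref{onearm}. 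It therefore suffices to show $\pr{\cG, A_{r,L}\leq cL^4}\leq (1-c') \pr{\cG}$ for some $c'>0$.

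Next, condition on $\cC(0;\ball{0}{r}) = H$ and on the line-segment configuration $L_H$, restricting to pairs $(H, L_H)$ realising $\cG$. Under this conditioning, edges not touching $H \cup L_H$ form independent critical percolation. For each line-good endpoint $x' \in W$, let $N_{x'} = \#\{y \in \ball{0}{r+L}\setminus \ball{0}{r} : x' \stackrel{\text{off } H\cup L_H}{\longleftrightarrow} y\}$, set $T = \sum_{x' \in W} N_{x'}$, and $f(y) = \sum_{x' \in W} \1(x' \stackrel{\text{off}}{\longleftrightarrow} y)$. Since each such $y$ is connected to $0$, one has $A_{r,L} \geq \#\{y : f(y) \geq 1\} \geq T^2/\sum_y f(y)^2$ by Cauchy--Schwarz. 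Using that each $x' \in W$ lies at distance at least $K$ from $H$, and that the regularity condition at the pioneer $x$ controls the density of $H$ near $x'$ at every scale, the restricted two-point function $\tau_{\Z^d \setminus (H\cup L_H)}(x',y)$ remains comparable to $\tau(y-x')$. Summing $\tau(y-x') \asymp \|y-x'\|^{2-d}$ over $y$ in the annulus within distance $L$ of $x'$ yields $\econd{N_{x'}}{H, L_H} \gtrsim L^2$, and hence $\econd{T}{H, L_H} \gtrsim L^4$.

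The main obstacle is controlling the second moment $\econd{\sum_y f(y)^2}{H, L_H} = \sum_{x'_1, x'_2 \in W,\, y} \pr{x'_1 \longleftrightarrow y,\, x'_2 \longleftrightarrow y \text{ off } H\cup L_H}$. By the BK inequality~\eqref{BKineq} this is at most
\[
\sum_{x'_1, x'_2 \in W} \sum_{y} \sum_{u \in \Z^d} \tau(x'_1-u)\, \tau(x'_2-u)\, \tau(u-y).
\]
Combining Claim~\ref{claim.G} with the sparsity estimate $|W \cap \ball{z}{s}| \lesssim s^2(\log s)^7$ (inherited from the regular-point density condition on $\partial \ball{0}{r}$) and a dimension count exploiting $d\geq 8$, so that $d-4 > 4$ exceeds the boundary density exponent of $W$, should give an upper bound of order $L^8$. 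Paley--Zygmund then yields $\prcond{A_{r,L} \geq cL^4}{H, L_H}{} \geq c'$ uniformly over $(H, L_H) \in \cG$, completing the proof. The delicate step is to turn the second-moment bound into a rigorous estimate: one must carefully split the sum over the branching point $u$ into contributions close to $W$ (where the density of $H$ matters) and deep in the annulus or outside (where the geometry of $W$ on $\partial\ball{0}{r}$ matters), in both regimes exploiting the density bounds furnished by our regularity condition.
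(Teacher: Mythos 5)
Your plan mirrors the paper's in its reduction (Proposition~\ref{thm:linegoodpoints} and Claim~\ref{claim:Klinegood}) and its moment-method structure, but uses a genuinely different counting variable. The paper works with $Z_{r,L}=\sum_{y\in\mathcal S_L}\1\big(y\stackrel{\text{off}}{\longleftrightarrow}\text{Reg}'\cap B(y,L)\big)$, which is at most $A_{r,L}$ and whose first and second moments (Lemmas~\ref{lem:firstmomentofy1}--\ref{lem:secondmomentofy1}) feed directly into Paley--Zygmund. You instead introduce $T=\sum_y f(y)$, a count with multiplicity, and pass through Cauchy--Schwarz, $A_{r,L}\ge T^2/\sum_y f(y)^2$. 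This is a workable variant, but the estimates as stated do not close.

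The gap is quantitative. You claim $\econd{\sum_y f(y)^2}{H,L_H}\lesssim L^8$; with $\econd{T}{H,L_H}\asymp L^4$, Cauchy--Schwarz then only yields $A_{r,L}\gtrsim L^8/L^8=O(1)$, not $L^4$. The bound you actually need --- and can get from Claim~\ref{claim.G}, $\sum_y\tau(u-y)\lesssim L^2$ over $y$ in the annulus, and the sparsity of $W$ inherited from the $\til\cT_s$ condition, which gives $\sum_{x_2'\in W}G(x_1',x_2')\lesssim 1$ precisely because $d-4>3$ for $d\ge 8$ --- is the much tighter $\econd{\sum_y f(y)^2}{H,L_H}\lesssim L^2|W|\asymp L^4$. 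Moreover, to apply Paley--Zygmund you must separately control $\econd{T^2}{H,L_H}=\sum_{y_1,y_2}\econd{f(y_1)f(y_2)}{H,L_H}\lesssim L^8$ (a genuinely different quantity, and essentially the content of Lemma~\ref{lem:secondmomentofy1}), and then apply Markov to $\sum_y f(y)^2$ for the Cauchy--Schwarz denominator; your proposal conflates these two second-moment quantities. Finally, the asserted comparability $\tau_{\Z^d\setminus(H\cup L_H)}(x',y)\gtrsim\tau(x'-y)$ is correct for $y\in\mathcal S_L$ and a line-good endpoint $x'$, but it is exactly the via-$H^*$ subtraction step of Lemma~\ref{lem:firstmomentofy1}, namely showing $\sum_{b\in H^*}\tau(x'-b)\tau(b-y)\lesssim K^{-1/2}\tau(x'-y)$, and that is where the boundary density control on $H$ near $x'$ and the hypothesis $d\ge 8$ actually do the work; presenting it as automatic elides the main computation.
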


		We now briefly explain the proof of Theorem~\ref{thm:3} in~\cite{KN11} and highlight the difference with our approach.  In~\cite{KN11}, the authors define a random variable $Y$ 
		which counts the number of vertices in the annulus that are connected to a single regular point on $\partial \ball{0}{r}$. This variable $Y$ is then a lower bound on the variable $A_{r,L}$ and hence the probability appearing in the statement of Theorem~\ref{thm:3} can be upper bounded by the same probability with $A_{r,L}$ replaced by $Y$. The next steps in~\cite{KN11} consist in establishing first and second moments of $Y$ conditional on the event that there are~$M$ regular points. The proof is then concluded using the large deviations results on the number of regular points together with the Payley-Zygmund inequality applied to the variable $Y$. In this section, instead of working with $Y$, we define another random variable $Z_{r,L}$ appearing below which also serves as a lower bound on $A_{r,L}$. Then as in~\cite{KN11} we bound the first and second moments of $Z_{r,L}$ on the event that there are~$M$ regular points. Bounding the first moment of $Y$ in~\cite{KN11} is quite involved and requires a different definition of regularity to the one we use here, while in the case of $Z_{r,L}$, it becomes much shorter using the idea of line-good points as we present below. The second moments of $Y$ and $Z_{r,L}$ are computed through a very similar case by case analysis.

	To this end for $L\ge 10K$ we define  $\mathcal S_L = B(0,r+L) \setminus B(0,r+L/2)$, and
	\[
	Z_{r,L} = \sum_{y\in \mathcal S_L} \1\big(y\stackrel{\text{off } \ce{B(0,r)}}{\longleftrightarrow}\text{Reg}'(\cC(0;\ball{0}{r}))\cap B(y,L)\big).
	\]

	\begin{lemma}\label{lem:firstmomentofy1}
		There exists $c>0$, such that for all $K$ sufficiently large, all $L\geq 10K$, $M\ge 1$, and $r\ge 2L$, we have 
		\[
		\E{Z_{r,L}\cdot \1(X_r^{K-\line}=M)} \ge c L^2 \cdot M\cdot 
		\pr{X_r^{K-\line}=M}.
		\]	
	\end{lemma}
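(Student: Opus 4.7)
The plan is to condition on $H:=\cC(0;\ball{0}{r})$ and $L(H)$, which fixes $\text{Reg}'(H)$ and the event $\{X_r^{K-\line}=M\}$. Since the line segments $L_{\tilde x}$ attached to distinct regular pioneers are at pairwise distance $\ge 2K$, the edges not touching $\ce{\ball{0}{r}}$ are independent of this conditioning, and the event inside the indicator defining $Z_{r,L}$ involves only such edges (together with edges from $\text{Reg}'(H)$ into the exterior). Writing $V_y:=\text{Reg}'(H)\cap\ball{y}{L}$, it therefore suffices to show
\[
\sum_{y\in\mathcal S_L}\pr{y\stackrel{\text{off }\ce{\ball{0}{r}}}{\longleftrightarrow}V_y}\ \gtrsim\ M L^2\qquad\text{whenever }|\text{Reg}'(H)|=M,
\]
and then to integrate over the conditioning.

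For each pair $(y,x')\in\mathcal S_L\times V_y$ the plan is to obtain the pointwise lower bound $\pr{y\stackrel{\text{off }\ce{\ball{0}{r}}}{\longleftrightarrow}x'}\gtrsim L^{2-d}$ via an FKG shift. Fix a large constant $K_0$ and set $x'_\ast:=x'+K_0 v$, where $v$ is the outward unit normal at $x'$, so that $x'_\ast\notin\ce{\ball{0}{r}}$. Applying FKG to the two increasing events ``all $K_0$ edges of the segment from $x'$ to $x'_\ast$ are open'' and ``$y\stackrel{\text{off }\ce{\ball{0}{r}}}{\longleftrightarrow}x'_\ast$'' yields
\[
\pr{y\stackrel{\text{off }\ce{\ball{0}{r}}}{\longleftrightarrow}x'}\ \ge\ p_c^{K_0}\cdot \pr{y\stackrel{\text{off }\ce{\ball{0}{r}}}{\longleftrightarrow}x'_\ast}.
\]
I decompose the right-hand probability as $\pr{y\leftrightarrow x'_\ast}-\pr{y\leftrightarrow x'_\ast\text{ via }\ce{\ball{0}{r}}}$; the first term is $\asymp L^{2-d}$ by~\eqref{twopoint}, and the second is bounded via the tree-graph inequality by $L^{2-d}\sum_{u\in\ce{\ball{0}{r}}}\tau(u-x'_\ast)$ (using $\tau(y-u)\lesssim L^{2-d}$ uniformly for $u\in\ball{0}{r+K}$). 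Since the shift guarantees $\|u-x'_\ast\|\ge K_0$ for every $u\in\ce{\ball{0}{r}}$, a dyadic decomposition around $x'_\ast$, combined with the bulk regularity $|\cC(0;\ball{0}{r})\cap\ball{x}{s}|\le s^4(\log s)^7$ for $s\ge K$ and the surface regularity $|\cC(0;\ball{0}{r})\cap\partial\ball{0}{r}\cap\ball{x}{s}|\le s^2(\log s)^7$ (the latter also handling the line-segment contribution via the $2K$-separation of $\text{Reg}(H)$), controls this sum by $K_0^{6-d}(\log K_0)^7$, which for $d>6$ becomes arbitrarily small as $K_0\to\infty$.

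To avoid over-counting when $y$ connects to several elements of $V_y$, I pass through a second moment. Setting $N_y:=\#\{x'\in V_y:y\stackrel{\text{off }\ce{\ball{0}{r}}}{\longleftrightarrow}x'\}$ gives $Z_{r,L}=\sum_y\1(N_y\ge 1)$ and $\E{N_y}\gtrsim L^{2-d}|V_y|$. The tree-graph inequality together with the high-dimensional triangle estimate $\sum_w\tau(y-w)\tau(w-a)\tau(w-b)\lesssim L^{2-d}(G(a,b)+L^{4-d})$ (with $G$ as in Claim~\ref{claim.G}) give $\E{N_y^{\,2}}\lesssim L^{2-d}|V_y|+L^{6-2d}|V_y|^2$, where the bound $\sum_{x'\in V_y}G(z,x')\lesssim 1$ follows, exactly as in the proof of Claim~\ref{cl:lowerboundoncapacity}, from $|V_y\cap\ball{z}{s}|\lesssim s^2(\log s)^7$ uniformly in $z$ (for $s\ge K$). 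Applied with $z=y$ and $s=L$, this same bound forces $|V_y|\lesssim L^2(\log L)^7\le L^{d-4}$ for $d\ge 8$, absorbing the second term so that Paley--Zygmund yields $\pr{N_y\ge 1}\gtrsim L^{2-d}|V_y|$. Summing over $y$ and interchanging,
\[
\E{Z_{r,L}\mid H,L(H)}\ \gtrsim\ L^{2-d}\sum_{x'\in\text{Reg}'(H)}|\mathcal S_L\cap\ball{x'}{L}|\ \asymp\ L^{2-d}\cdot M\cdot L^d\ =\ ML^2,
\]
using that $|\mathcal S_L\cap\ball{x'}{L}|\asymp L^d$ for every $x'\in\partial\ball{0}{r+K}$ whenever $L\ge 10K$. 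The step I expect to be the main obstacle is the ``via'' bound above: without the FKG shift, the line segment $L_x$ contains vertices at distances $1,\ldots,K$ from $x'$ whose contribution $\sum_k k^{2-d}$ to $\sum_{u\in\ce{\ball{0}{r}}}\tau(u-x')$ is $O(1)$ independently of $K$, and it is precisely the outward shift to $x'_\ast$ that moves every summand to distance at least $K_0$ and makes the dyadic bounds small.
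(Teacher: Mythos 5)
Your proof is correct and takes a genuinely different route from the paper's. Both arguments condition on the extended cluster $\ce{\ball{0}{r}}=H$ and reduce to lower-bounding $\pr{y\stackrel{\text{off }H}{\longleftrightarrow}\text{Reg}'(B)\cap B(y,L)}$, but they diverge at the two key technical steps. First, to handle the nearby vertices of the segment $L_x$ (which would otherwise contribute an $O(1)$ constant to the ``via'' sum), the paper uses an ordering and ``first ball of radius $K$ hit'' argument, paying $p_c^{d(2K+1)^d}$ to open an entire ball and reducing $H$ to $H^*=H\setminus\bigcup L_x$; you instead pay only $p_c^{K_0}$ via an FKG shift to $x'_\ast=x'+K_0v$, which pushes \emph{all} of $H$ (including every line segment) to distance $\geq K_0$ and is arguably cleaner. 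Second, the paper treats the target set $V_y=\text{Reg}'(B)\cap B(y,L)$ in aggregate: it invokes Lemma~\ref{lem:hitdiameterdistance} (the Fitzsimmons--Salisbury second-moment argument, packaged as a $\cpc{d-4}{\cdot}$ lower bound) and Claim~\ref{cl:lowerboundoncapacity} to get $\pr{y\leftrightarrow V_y}\gtrsim L^{2-d}|V_y|$, then subtracts the aggregate ``via'' estimate. You instead establish a \emph{per-pair} bound $\pr{y\stackrel{\text{off }H}{\longleftrightarrow}x'}\gtrsim L^{2-d}$, and aggregate afterward by running Paley--Zygmund directly on the count $N_y$. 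This effectively unpacks the capacity machinery into an explicit second-moment computation; it is more elementary and self-contained, at the cost of redoing a tree-graph/triangle estimate that the paper outsources to Claim~\ref{claim.G} and Lemma~\ref{lem:hitdiameterdistance}.

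Two small points worth flagging. One: you write ``fix a large constant $K_0$,'' but the surface-density part of the via sum contributes roughly $K\,K_0^{4-d}(\log K_0)^7$ (each regular point within distance $s$ of $x'_\ast$ carries up to $K$ line-segment vertices), so for the sum to be uniformly small you should take $K_0$ comparable to $K$ (e.g.\ $K_0=K$); the resulting constant in the lemma is then $K$-dependent, exactly as in the paper's own proof, where the constant is $c(K)=p_c^{d(2K+1)^d}$ despite the lemma's wording. Two: Section~\ref{sec:newproof} explicitly adopts the $\til{\cT}_s$, $\til{\cT}_s^\loc$ definitions of Remark~\ref{rem:scubed} (boundary density $s^3(\log s)^7$, requiring $d\geq 8$), whereas you use the $\cT_s$ version with $s^2(\log s)^7$. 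Your computation goes through verbatim under either definition for $d\geq 8$ — the key inequalities $|V_y|\lesssim L^{d-4}$ and $\sum_{x'\in V_y}G(z,x')\lesssim 1$ need only $\alpha<d-4$ — but it is worth aligning with the paper's convention since it is the $\til{\cT}_s$ version that enters Proposition~\ref{thm:linegoodpoints} and Claim~\ref{claim:Klinegood} as used in Theorem~\ref{thm:3}.
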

	
	\begin{proof}[\bf Proof]
		Given $H$ a possible realization of an extended cluster $\ce{B(0,r)}$, we write 
		$$B= H\cap B(0,r). $$ 
		The n 
		\begin{align}\label{eq:ZrL.1}
			\nonumber		&\E{Z_{r,L}\cdot \1(X_r^{K-\line}=M)} = \sum_{H: |\text{Reg}'(B)|=M} \E{Z_{r,L}\cdot \1(\cez{\ball{0}{r}}=H)}\\
			\nonumber			& = \sum_{H: |\text{Reg}'(B)|=M} \E{\1(\cez{\ball{0}{r}}=H) \sum_{y\in \mathcal S_L} \1(y\stackrel{\text{off } H}{\longleftrightarrow}\text{Reg}'(B)\cap B(y,L) )}\\
			&=\sum_{H: |\text{Reg}'(B)|=M} \sum_{y\in \mathcal S_L} \pr{\cez{\ball{0}{r}}=H} \pr{y\stackrel{\text{off } H}{\longleftrightarrow}\text{Reg}'(B) \cap B(y,L)},
		\end{align}
		where  the two events above decorrelate, since they depend on disjoint sets of edges. 
		We next prove that there exists a universal positive constant $c$ so that for any $y\in \mathcal S_L$, 
		\begin{align}\label{eq:goalforytohittl.onearm}
			\pr{y\stackrel{\text{off } H}{\longleftrightarrow}\text{Reg}'(B)\cap B(y,L)} \geq c\cdot  L^{2-d} \cdot |\text{Reg}'(B)\cap B(y,L)|.
		\end{align}
		To this end assign an ordering to the points of $\text{Reg}'(B)$, and on the event $\{y\stackrel{\text{off } H}{\longleftrightarrow} \text{Reg}'(B)\}$, let $Y$ be the first point in the ordering such that $y$ is connected to the boundary of the ball of radius $K$ around this point off $H$. For $x\in \text{Reg}'(B)$, let $L_x$ be the line segment of length $K$ from $x$ to $\partial B(0,r)$ (which by definition is part of $H$), and let $$H^* = H\setminus (\cup_{x\in \text{Reg}'(B)}L_x).$$  
		We claim that
		\begin{align}
			\label{eq:firstballhit.onearm}
			\begin{split}
				&\pr{y\stackrel{\text{off } H}{\longleftrightarrow} \text{Reg}'(B)\cap B(y,L) } \\
				&\geq  \sum_{x\in \text{Reg}'(B)\cap B(y,L)} \pr{Y=x, y\stackrel{\text{off } H^*}{\longleftrightarrow} \ball{x}{K},  \text{ all edges in } \ball{x}{K} \text{ are open}}.
			\end{split}
		\end{align}
		Indeed, notice that on the event in the probability above, there exists an open path connecting $y$ to $B(x,K)$ off $H$, since the lines $L_x$ are part of the balls $B(x,K)$. Let~$z$ be the vertex on $\partial B(x,K)$ that $y$ is connected to. Since all edges of $B(y,K)$ are open, this implies that there is an open path inside $B(y,K)$ connecting $z$ to $x$ without intersecting $L_x$, and hence this means that $y\stackrel{\text{off } H}{\longleftrightarrow} \text{Reg}'(B)\cap B(y,L)$. 
		
		Since the two events appearing in the probability on the right hand side of~\eqref{eq:firstballhit.onearm} are independent, they factorise, and hence we get with $c(K)=p_c^{d(2K+1)^{d}}$,  
		\begin{align*}
			\pr{y\stackrel{\text{off }H}{\longleftrightarrow} \text{Reg}'(B)\cap B(y,L)}&\geq  c(K)\cdot \sum_{x\in \text{Reg}'(B)\cap B(y,L)} \pr{Y=x, y\stackrel{\text{off } H^*}{\longleftrightarrow}  \ball{x}{K}}\\
			& = c(K) \cdot \mathbb P\Big(y\stackrel{\text{off }H^*}{\longleftrightarrow} \bigcup_{x\in \text{Reg}'(B)\cap B(y,L)}B(x,K)\Big)
			\\\ &\geq c(K)\cdot \pr{y\stackrel{\text{off }H^*}{\longleftrightarrow} \text{Reg}'(B)\cap B(y,L)},
		\end{align*}
		where the last inequality follows from the fact that to reach $\text{Reg}'(B)\cap B(y,L)$, the cluster of $y$ must necessarily first reach $\cup_{x\in \text{Reg}'(B)\cap B(y,L)}\partial B(x,K)$.
		It remains to bound the last probability above. First we write 
		\begin{equation}\label{eq:hitofftildb.onearm}
			\pr{y\stackrel{\text{off }H^*}{\longleftrightarrow} \text{Reg}'(B)\cap B(y,L)} = \pr{y\longleftrightarrow \text{Reg}'(B)\cap B(y,L)} -  \pr{y\stackrel{\text{via }H^*}{\longleftrightarrow} \text{Reg}'(B)\cap B(y,L)}, 
		\end{equation}
		where ``via $H^*$" means using a path of open edges intersecting $H^*$.  
		Next by the BK inequality~\eqref{BKineq} we get,
		\begin{align}\label{eq:boundviabtilde.onearm}
			\nonumber	&	\pr{ y\stackrel{\text{via } H^*}{\longleftrightarrow} \text{Reg}'(B)\cap B(y,L)} \leq \sum_{b\in H^*} \pr{y\longleftrightarrow b} \cdot \pr{b\longleftrightarrow \text{Reg}'(B)\cap B(y,L)}\\
			& \lesssim L^{2-d} \cdot \sum_{b\in H^*} \pr{b\longleftrightarrow \text{Reg}'(B)\cap B(y,L)} 	\lesssim L^{2-d} \cdot  \sum_{b\in H^*}\sum_{x'\in \text{Reg}'(B)\cap B(y,L)} \tau(x'-b), 
		\end{align}
		where for the second inequality we used that $\|y-b\|\asymp L$ and the last inequality follows by a union bound. 
		We set for $t>0$
		\[
		B_t(x') = H^*\cap \ball{x'}{2^t}\setminus \ball{x'}{2^{t-1}}.
		\]
		Then the above sum can be split as follows, using~\eqref{twopoint}, 
		\begin{align*}
			\sum_{x'\in \text{Reg}'(B)} \sum_{b\in H^*} \tau(x'-b) \asymp \sum_{x'\in \text{Reg}'(B)}\sum_{t=0}^{\log_2 (2r)} \frac{|B_t(x')|}{2^{t(d-2)}}.
		\end{align*}
		Recalling that $H^*$ is at distance $K$ from $x'$, we see that 
		$B_t(x')=\emptyset$, for $t< \log_2(K)$.
		For $t\geq \log_2(K)$ writing $x\in \Reg{B}$ for the point on $\partial \ball{0}{r}$
		corresponding to $x'$ and using the density condition for $x$, i.e.\ that the event $\cT_s(x)$ holds, since $x\in \Reg{B}$, we obtain (taking also into account points in $L(B)$ which may be part of $H^*$), 
		\[
		|B_t(x')| \leq  |\ball{x}{2^{t+1}}\cap H^* | \leq (2^{t+1})^4(\log(2^{t+1}))^7+K(2^{t+1})^3 (\log(2^{t+1}))^7 \asymp t^7\cdot (2^{4t} + K \cdot 2^{3t}), 
		\]
		Inserting this bound in the previous sum, this yields 
		\begin{align*}
			\sum_{x'\in \text{Reg}'(B)} \sum_{b\in H^*} \tau(x'-b) & \asymp \sum_{x'\in \text{Reg}'(B)}\sum_{t=\log_2(K)}^{\log_2(2r)} \frac{|B_t(x')|}{2^{t(d-2)}}	\lesssim 
			\sum_{x'\in \text{Reg}'(B)}\sum_{t=\log_2(K)}^{\infty} \frac{t^7\cdot (2^{4t} + K \cdot 2^{3t})}{2^{t(d-2)}}\\ & \lesssim \frac{1}{\sqrt K} |\text{Reg}'(B)|, 
		\end{align*}
		where for the last inequality we use that $d>6$. 
		Plugging this bound into~\eqref{eq:boundviabtilde.onearm}
		we deduce 
		\begin{align}\label{eq:hitviatildbsmall.onearm}
			\pr{y\stackrel{\text{via }H^*}{\longleftrightarrow} \text{Reg}'(B)\cap B(y,L)}\lesssim \frac{1}{\sqrt K} \cdot L^{2-d} \cdot |\text{Reg}'(B)\cap B(y,L)|.
		\end{align}
		On the other hand, we get from Lemma~\ref{lem:hitdiameterdistance}
		\begin{align*}
			\pr{ y{\longleftrightarrow} \text{Reg}'(B)\cap B(y,L)}\gtrsim L^{2-d} \cdot \cpc{d-4}{\text{Reg}'(B)\cap B(y,L)}\gtrsim L^{2-d}\cdot |\text{Reg}'(B)\cap B(y,L)|,
		\end{align*}
		where for the second inequality we used Claim~\ref{cl:lowerboundoncapacity}, since the set $\text{Reg}'(B)$ satisfies the density condition of the statement of the claim by definition and $d\geq 8$.
		Plugging the above together with~\eqref{eq:hitviatildbsmall.onearm} into~\eqref{eq:hitofftildb.onearm} and taking $K$ sufficiently large gives
		\begin{equation}\label{eq:goalforw2.onearm}
			\pr{y\stackrel{\text{off }H^*}{\longleftrightarrow} \text{Reg}'(B)\cap B(y,L)} \gtrsim L^{2-d}\cdot |\text{Reg}'(B)\cap B(y,L)|.
		\end{equation}		
		Then plugging~\eqref{eq:goalforytohittl.onearm} into~\eqref{eq:ZrL.1} we get 
		\begin{align*}	\E{Z_{r,L}\cdot \1(X_r^{K-\line}=M)}   
			&\gtrsim \sum_{H: |\text{Reg}'(B)|=M}  \pr{\ce{\ball{0}{r}^c}=H}\cdot L^{2-d} \cdot\sum_{y\in \mathcal S_L} |\text{Reg}'(B)\cap B(y,L)|\\
			&\gtrsim L^{2-d} \cdot \sum_{H: |\text{Reg}'(B)|=M} \pr{\ce{\ball{0}{r}^c}=H}\cdot \sum_{x\in \text{Reg}'(B)} |B(x,L)\cap \mathcal S_L| \\
			&\gtrsim L^2\cdot M \cdot \pr{X_r^{K-\line}= M},
		\end{align*}
		using a sum inversion for the second inequality. This finishes the proof. 
	\end{proof}

	\begin{lemma}\label{lem:secondmomentofy1}
		For any positive constant $c$, there exists $C>0$, such that for all $L\ge 10K$, $M\ge cL^2$, and $r\ge 2L$, we have 
		\begin{align*}
			\E{Z_{r,L}^2\cdot \1(X_r^{K-\line}=M)}\leq C\cdot  M^2\cdot L^4\cdot \pr{X_r^{K-\line}=M}.
		\end{align*}
	\end{lemma}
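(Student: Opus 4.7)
The plan is to mirror the structure of Lemma~\ref{lem:firstmomentofy1}, first decorrelating the event from the extended cluster and then reducing to a four-point connection estimate, with the second moment now playing the role of the first.

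First I would condition on the extended cluster $\ce{B(0,r)}=H$. Exactly as in~\eqref{eq:ZrL.1}, the event that $y\stackrel{\text{off }H}{\longleftrightarrow} \text{Reg}'(B)\cap B(y,L)$ depends only on edges not in $H$, so we obtain
\[
\E{Z_{r,L}^2\cdot \1(X_r^{K-\line}=M)}=\sum_{H:\,|\text{Reg}'(B)|=M}\pr{\ce{B(0,r)}=H}\cdot \mathbb{E}\bigl[\widetilde Z(H)^2\bigr],
\]
where $\widetilde Z(H)=\sum_{y\in \mathcal{S}_L}\1(y\stackrel{\text{off }H}{\longleftrightarrow}\text{Reg}'(B)\cap B(y,L))$. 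It thus suffices to show $\mathbb{E}[\widetilde Z(H)^2]\lesssim M^2 L^4$ uniformly in such $H$.

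Next, expanding the square and dropping the ``off $H$'' restriction (which only increases probabilities) together with a union bound gives
\[
\mathbb{E}\bigl[\widetilde Z(H)^2\bigr]\le \sum_{y_1,y_2\in \mathcal{S}_L}\sum_{\substack{x_i\in \text{Reg}'(B)\cap B(y_i,L)\\ i=1,2}}\pr{y_1\longleftrightarrow x_1,\; y_2\longleftrightarrow x_2}.
\]
I would then apply the tree-graph/BK bound for two simultaneous connections:
\[
\pr{y_1\longleftrightarrow x_1,\,y_2\longleftrightarrow x_2}\lesssim \tau(y_1,x_1)\tau(y_2,x_2)+\sum_{w\in \Z^d}\sum_{\pi}\tau(\pi_1,w)\tau(\pi_2,w)\tau(\pi_3,w)\tau(\pi_4,w),
\]
where the second sum runs over the (finitely many) topological pairings of $\{y_1,y_2,x_1,x_2\}$ corresponding to a shared meeting vertex $w$.

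For the disjoint term, the sum factorises: using $\tau(y,x)\asymp (1+\|y-x\|)^{2-d}$ we have $\sum_{y\in \mathcal{S}_L\cap B(x,L)}\tau(y,x)\lesssim \sum_{k=1}^L k\asymp L^2$, and summing over $x\in \text{Reg}'(B)$ yields $\lesssim ML^2$ per factor. Squaring gives exactly the target $\lesssim M^2L^4$, which is precisely the same calculation appearing in Paragraph 3 of the proof of Lemma~\ref{lem:firstmomentofy1}. The main obstacle is the meeting-vertex term. Here I would split the $w$-sum according to the position of $w$ relative to $\mathcal{S}_L$ and $\partial B(0,r)$, use the convolution bound $\sum_w \tau(a,w)\tau(b,w)\asymp G(a,b)\asymp (1+\|a-b\|)^{4-d}$ (Claim~\ref{claim.G}) to integrate out $w$, and then invoke the regularity density bound $|\text{Reg}'(B)\cap B(z,s)|\lesssim s^2(\log s)^7$ inherited from $\widetilde{\mathcal T}_s(x)$ (Remark~\ref{rem:scubed}) to perform the remaining sums over $x_1,x_2$ and $y_1,y_2$. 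The dimensional assumption $d\ge 8$ is what makes the resulting geometric series convergent, and careful case analysis (e.g.\ $\|y_1-y_2\|\le L$ versus $\|y_1-y_2\|\gg L$, where in the latter regime the disjoint term already dominates since a meeting vertex forces all four $\tau$ factors to be small) ultimately produces the bound $\lesssim M^2L^4$. Combining both contributions and using $M\ge cL^2$ only in the very last step to absorb lower order terms of the form $ML^{d-\text{stuff}}$ into $M^2L^4$ completes the proof.
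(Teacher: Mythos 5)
Your overall outline (condition on the extended cluster, decorrelate, expand the square, and reduce to estimating $\sum_{y_1,y_2}\pr{y_1\leftrightarrow x_1,\,y_2\leftrightarrow x_2}$) matches the paper. However, your central estimate is wrong: the claimed inequality
\[
\pr{y_1\leftrightarrow x_1,\,y_2\leftrightarrow x_2}\lesssim \tau(y_1,x_1)\tau(y_2,x_2)+\sum_{w}\tau(y_1,w)\tau(x_1,w)\tau(y_2,w)\tau(x_2,w)
\]
with a \emph{single} meeting vertex $w$ is not a valid BK/tree-graph bound when $y_1,x_1,y_2,x_2$ are all distinct. The single-vertex picture encodes the event that $y_1,x_1,y_2,x_2$ are connected to $w$ by four disjoint open paths, but this does not exhaust the complement of ``disjointly''; for instance if the two witnessing paths $\gamma_1:y_1\to x_1$ and $\gamma_2:y_2\to x_2$ share a long segment $[u,v]$, no choice of $w$ produces four pairwise-disjoint witnesses, so the event escapes your upper bound. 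The single-vertex formula is the correct tree-graph inequality for the \emph{three}-point function $\pr{x\leftrightarrow y_1,\,x\leftrightarrow y_2}$; for the two-connection event at four distinct endpoints one needs \emph{two} internal vertices $u,v$ (the first and last intersection points of $\gamma_1$ with $\gamma_2$), yielding a five-fold product such as $\tau(x_1,u)\tau(u,v)\tau(v,y_1)\tau(x_2,u)\tau(v,y_2)$ plus the swapped variant, which is exactly~\eqref{disjointpaths} in the paper. This is not a cosmetic difference: plugging in four factors of $\tau$ at a common vertex produces a convolution of the wrong order and the subsequent case analysis would not reproduce the $L^4$ bound per pair.

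A second, minor, discrepancy: you propose to invoke the boundary density bound $|\text{Reg}'(B)\cap B(z,s)|\lesssim s^3(\log s)^7$ to control the sums over $x_1,x_2$. In the paper's proof of this second-moment lemma that regularity estimate plays no role at all. After bounding $\sum_{y_1,y_2}\pr{y_1\leftrightarrow x_1,\,y_2\leftrightarrow x_2}\lesssim L^4$ uniformly over distinct $x_1,x_2\in\text{Reg}'(B)$ (together with the degenerate cases $x_1=x_2$ and/or $y_1=y_2$, handled via the three-point bound and $M\geq cL^2$), one simply multiplies by the number of pairs, $M^2$. The regularity/sparseness of $\text{Reg}'(B)$ is only needed in the first-moment lemma (Lemma~\ref{lem:firstmomentofy1}). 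So in addition to the wrong tree-graph inequality, the route you sketch is longer than necessary.
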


	\begin{proof}[\bf Proof]
		
		Let $H$ be a configuration such that $\pr{\ce{\ball{0}{r}^c}=H}>0$ and $|\text{Reg}'(B)|=M$ with~$B=H\cap \ball{0}{r}^c$.	We then have 
		\begin{align}\label{eq:secondmomentofy}
			\begin{split}
				&	\E{Z_{r,L}^2\cdot \1(\ce{\ball{0}{r}}=H)} \\
				&	= \sum_{x_1,x_2 \in \text{Reg}'(B)}\sum_{\substack{y_1\in \mathcal S_L \cap B(x_1,L) \\y_2\in \mathcal S_L \cap B(x_2,L) }}  \mathbb P\Big(\ce{\ball{0}{r}}=H, y_1\stackrel{\text{off } H}{\longleftrightarrow} x_1, y_2\stackrel{\text{off } H}{\longleftrightarrow} x_2\Big) \\
				&\leq \pr{\ce {\ball{0}{r}}=H}\sum_{x_1,x_2 \in \text{Reg}'(B)}  \sum_{\substack{y_1\in \mathcal S_L \cap B(x_1,L) \\y_2\in \mathcal S_L \cap B(x_2,L) }}  \pr{y_1\longleftrightarrow x_1, y_2\longleftrightarrow x_2},
			\end{split}
		\end{align}
		where for the last inequality first we used the independence of the two events, as they depend on disjoint sets of edges, and then we upper bounded the probabilities by also allowing the paths to use edges of $H$. 
		
		We first treat the case $x_1=x_2=x$ and $y_1=y_2=y$. Then the sum above becomes
		\begin{align*}
 \sum_{x\in \text{Reg}'(B)}  \sum_{\substack{y\in \mathcal S_L \cap B(x,L)  }}  \tau(x-y) \asymp L^2 \cdot M.
		\end{align*}
		Next we consider the case $x_1=x_2$ and $y_1\neq y_2$. Then using the BK inequality and 	that uniformly over $u,x\in \mathbb Z^d$, one has 
		\begin{align}\label{eq:uniformoverz}
			\sum_{y\in B(x,L)}\tau(y-u) \lesssim L^2,
		\end{align}
		the sum above becomes upper bounded by 
		\begin{align*}
		& \sum_{x\in \text{Reg}'(B)}  \sum_{\substack{y_1, y_2\in \mathcal S_L \cap B(x,L)  }} \sum_z \tau(x-z)\tau(z-y_1)\tau(z-y_2) \\ &\lesssim L^2\cdot  \sum_{x\in \text{Reg}'(B)}\sum_{y_1\in \mathcal S_L \cap B(x,L) }\sum_z \tau(x-z)\tau(z-y_1)
		 \asymp L^2\cdot M\cdot L^4 = L^6\cdot M \lesssim  M^2\cdot L^4
		\end{align*}
	using our assumption on $M$. If $x_1\neq x_2$ and $y_1=y_2$, then the same argument gives an upper bound of order~$L^2\cdot M^2$. Therefore, for all these cases the sum appearing in~\eqref{eq:secondmomentofy} is upper bounded by~$L^4\cdot M^2$. 
	
	We now concentrate on the case when $x_1\neq x_2$ and $y_1\neq y_2$. We notice that if $x_1$ is connected to $y_1$ and $x_2$ is connected to $y_2$, then either there exist two disjoint paths connecting them, or there exists $u$ and $v$, such that the following event holds: 
		\begin{align}\label{disjointpaths}
			\begin{split} &\{x_1\longleftrightarrow u\}\circ \{u\longleftrightarrow v\} \circ \{v\longleftrightarrow y_1\}\circ \{x_2\longleftrightarrow u\}\circ \{v\longleftrightarrow y_2\}\\
				& \cup \{x_1\longleftrightarrow u\}\circ \{u\longleftrightarrow v\} \circ \{v\longleftrightarrow y_1\}\circ \{x_2\longleftrightarrow v\}\circ \{u\longleftrightarrow y_2\}.
			\end{split}
		\end{align}
		Indeed, to see this, consider two simple paths connecting $x_1$ and $y_1$, and $x_2$ and $y_2$ respectively, then let $u$ be the first point of intersection of the first path with the second one, and $v$ the last intersection point (where first and last intersection points are understood from the point of view of the first path say). Then using that the paths from $x_1$ to $u$ and from $v$ to $y_1$ are disjoint from the path from $x_2$ to $y_2$, we can infer~\eqref{disjointpaths} (where the two events correspond to whether this last path first visit $u$ and then $v$ or the opposite). Hence we deduce, using BK inequality~\eqref{BKineq},  
		\begin{align*}
			\pr{y_1\longleftrightarrow x_1, y_2\longleftrightarrow x_2 } & \leq \tau(y_1- x_1)\tau(y_2-x_2) + \sum_{u,v}  \tau(x_1-u)\tau(u-v)\tau(v-y_1)\tau(x_2-u)\tau(v-y_2)\\
			&  \quad +\sum_{u,v} \tau(x_1-u)\tau(u-v)\tau(v-y_1)\tau(x_2-v)\tau(u-y_2). 
		\end{align*}
		Note that the first term on the right-hand side above is of order $L^{4-2d}$, by~\eqref{twopoint}, since by definition $x_1$ and $y_1$ are at distance order $L$ from each other, and similarly for $x_2$ and $y_2$. Using again~\eqref{eq:uniformoverz} we get
		\begin{align*}
			& \sum_{\substack{y_1\in \mathcal S_L \cap B(x_1,L) \\y_2\in \mathcal S_L \cap B(x_2,L) }} \pr{y_1\longleftrightarrow x_1, y_2\longleftrightarrow x_2 }\\ 
			& \lesssim L^4  + L^2 \sum_{u,v} \sum_{y \in \mathcal S_L\cap B(x_2,L)} \tau(x_1-u) \tau(u-v)\tau(x_2-u)\tau(v-y) + L^4 \sum_{u,v} \tau(x_1-u)\tau(u-v) \tau(x_2-v). 
		\end{align*}		
		Using next Claim~\ref{claim.G}, we obtain with the function $G$ as in the claim,  
		\begin{align*}
			& \sum_{\substack{y_1\in \mathcal S_L \cap B(x_1,L) \\y_2\in \mathcal S_L \cap B(x_2,L) }} \pr{y_1\longleftrightarrow x_1, y_2\longleftrightarrow x_2 }\\ 
			& \lesssim L^4  + L^2 \sum_{u} \sum_{y\in \mathcal S_L\cap B(x_2,L)} \tau(x_1-u) G(u,y)\tau(x_2-u) + L^4 \sum_u  \tau(x_1-u)G(u,x_2). 
		\end{align*}
		Now we observe on one hand that, since $d\ge 7$, one has 	
		uniformly over $x_1,x_2\in \mathbb Z^d$,  
		\begin{equation}\label{convol.gG}
			\sum_u  \tau(x_1-u)G(u,x_2)\lesssim 1, 
		\end{equation}
		and on the other hand, since $x_1$ and $x_2$ are assumed to be at least at distance $L/4$ from $\mathcal S_L$, one has 
		\begin{align*} 
			\sum_{u} \sum_{y\in \mathcal S_L\cap B(x_2,L)} \tau(x_1-u) G(u,y)\tau(x_2-u) & \lesssim  \sum_{y\in \mathcal S_L\cap B(x_2,L)} \sum_{u\in B(y,L/8)} L^{2(2-d)}  G(u,y) \\
			& +\sum_{y\in \mathcal S_L\cap B(x_2,L)} \sum_{u\notin B(y,L/8)} \tau(x_1- u) \tau(u-y) G(x_2,u) \\
			& \lesssim L^{8-d}+ L^2\sum_u\tau(x_1- u) G(x_2,u)  \lesssim L^2. 
		\end{align*}
		using again~\eqref{convol.gG} and that $d\ge 7$ for the last inequality, and for the first inequality the fact that for $u\notin B(y,L/8)$ and $y\in B(x_2,L)$, one has $\|y-u\|\gtrsim \|u-x_2\|$, and thus also $G(u,y)\tau(x_2-u)\gtrsim \tau(u-y)G(x_2,u)$. Using the bound $L^4\cdot M^2$ for the sum in~\eqref{eq:secondmomentofy} from the cases of equality and  summing next the above bound over $x_1,x_2\in R'(B)$, we obtain for any $H$ with $|\text{Reg}'(B)|=M$, 
		\begin{align*}
			\E{Z_{r,L}^2\cdot \1(\ce{\ball{0}{r}}=H)} \lesssim M^2 \cdot L^4 \cdot \pr{\ce{\ball{0}{r}}=H}.
		\end{align*}
		Then taking the sum over all $H$ as above concludes the proof. \end{proof}

	\begin{proof}[\bf Proof of Theorem~\ref{thm:3}]
		Let $K$ be sufficiently large so that Lemma~\ref{lem:firstmomentofy1} and Proposition~\ref{thm:linegoodpoints} both hold. Fix also $L\ge 10K$ and $r\ge 2L$. Then by definition it is immediate to check that $A_{r,L}\geq Z_{r,L}$. Let~$\rho=\frac{c\cdot \epsilon(K)}{2}$, with $c$ as in Lemma~\ref{lem:firstmomentofy1} and $\epsilon(K)$ as in Proposition~\ref{thm:linegoodpoints}. We then obtain 
		\begin{align*}
			&	\pr{X_{r}\geq L^2 \text{ and } A_{r,L}\leq \rho L^4}\\ 
			&	 \leq \pr{X_{r}\geq L^2, \ X_{r}^{K-\line}\leq \epsilon(K) \cdot L^2} + \pr{X_{r}^{K-\line}\geq \epsilon(K)\cdot L^2, A_{r,L}\leq \rho \cdot L^4} \\
			&	\lesssim r^{d-1}\cdot \exp(-c\cdot (\log L)^4) +  \pr{X_{r}^{K-\line}\geq \epsilon(K)\cdot L^2, \ Z_{r,L}\leq \rho\cdot L^4},
		\end{align*}
		where for the last inequality we used Proposition~\ref{thm:linegoodpoints} and Claim~\ref{claim:Klinegood}.

		By the Payley-Zygmund inequality and using Lemmas~\ref{lem:firstmomentofy1} and~\ref{lem:secondmomentofy1}, for any $M\ge \epsilon(K)\cdot L^2$, 
		\begin{align*}
			\mathbb P\Big(Z_{r,L} \ge \rho\cdot  L^4\ \Big|\ X_{r}^{K-\line}=M\Big) 
			& \ge  \prcond{Z_{r,L}\geq \frac{1}{2}\econd{Z_{r,L}}{X_{r}^{K-\line}=M}}{X_{r}^{K-\line}=M}{} \\ 
			& \geq  \frac{1}{4}\cdot \frac{\left(\econd{Z_{r,L}}{X_{r}^{K-\line}=M}\right)^2}{\econd{Z_{r,L}^2}{X_{r}^{K-\line}=M}}\geq c_2=c_2(K), 
		\end{align*}
		for a positive constant $c_2$ depending on $K$. 
		Hence, summing over $M\ge \epsilon(K)\cdot L^2$, this gives 
		\begin{align*}
			\pr{X_{r}^{K-\line}\geq \epsilon(K)\cdot L^2, Z\leq \rho \cdot L^4}&\leq (1-c_2)\cdot  \pr{X_{r}^{K-\line}\geq \epsilon(K)\cdot L^2}
			\\ &\leq (1-c_2)\cdot \pr{X_r>0}.
		\end{align*}
		Using the obvious lower bound 
		\[
		\pr{0\longleftrightarrow \ball{0}{r}} \ge \frac{1}{r^{d-2}},
		\]
		and assuming now that $L\geq r^{\alpha}$, it follows that by taking $r$ sufficiently large 
		\[
		r^{d-1} \cdot \exp(-c\cdot (\log L)^4)\leq \frac{c_2}{2} \cdot \pr{0\longleftrightarrow \ball{0}{r}},
		\]
		and hence 
		\[
		\pr{X_{r}\geq L^2 \text{ and } A_{r,L}\leq cL^4} \leq \left(1-\frac{c_2}{2}\right)\cdot  \pr{0\longleftrightarrow \ball{0}{r}},
		\]
		which concludes the proof.
	\end{proof}

	\subsection{Hitting probabilities for the IIC}

		We start by proving the upper bound, which is by far the easiest part of the theorem, as it simply follows from an application of the BK inequality and the definition of the IIC measure. 
	
	\begin{proof}[\bf Proof of Theorem~\ref{thm.IIC}: the upper bound]
		First note that for any $w,z\in \mathbb Z^d$, and any finite $A\subset \mathbb Z^d$, using~\eqref{BKineq}, 
		\begin{align*}
			\mathbb P(z\longleftrightarrow A, z\longleftrightarrow w)&  \le \sum_{u\in \mathbb Z^d} \mathbb P(\{u\longleftrightarrow A\}\circ \{u\longleftrightarrow z\}\circ \{u\longleftrightarrow w\}) \\
			& \le \sum_{u\in \mathbb Z^d} \tau(w-u)\cdot \tau(z-u) \cdot \mathbb P(u\longleftrightarrow A), 
		\end{align*}
		from which we infer, using~\eqref{twopoint}, 
		$$\limsup_{\|w\|\to \infty} \frac{\mathbb P(z\longleftrightarrow A, z\longleftrightarrow w) }{\tau(w)} \le \sum_{u\in \mathbb Z^d} \mathbb P(u\longleftrightarrow A) \cdot \tau(z-u). $$ 
		Now it is not difficult to see, using the definition of the IIC measure in~\cite{CCHS25} and a localization procedure, similar as what we did in the proof of Theorem~\ref{thm.defpcap}, that we also have for any fixed $z$, 
		$$\mathbb P(\mathcal C_\infty(z) \cap A\neq \emptyset) = \lim_{\|w\|\to \infty} \frac{\mathbb P(z\longleftrightarrow A, z\longleftrightarrow w) }{\tau(w-z)}=\lim_{\|w\|\to \infty} \frac{\mathbb P(z\longleftrightarrow A, z\longleftrightarrow w) }{\tau(w)}. $$ 
		Moreover, Theorem~\ref{thm.defpcap} and~\eqref{twopoint} imply that 
		$$\limsup_{\|z\|\to \infty} \frac{\sum_{u\in \mathbb Z^d} \mathbb P(u\longleftrightarrow A) \cdot \tau(z-u)}{\|z\|^{d-4}}\le \textrm{pCap}(A) \cdot\limsup_{\|z\|\to \infty} \frac{\sum_{u\in \mathbb Z^d} \tau(u) \tau(z-u)}{\|z\|^{d-4}} \lesssim \textrm{pCap}(A). $$
		Altogether, this concludes the proof of the upper bound in Theorem~\ref{thm.IIC}. 
	\end{proof}

	We are now ready to finish the proof of Theorem~\ref{thm.IIC}. Given three sets $A_1,A_2,A_3$, we will write,  $A_1\stackrel{\text{off } A_2}{\longleftrightarrow} A_3$, for the event that the two sets $A_1$ and $A_3$ are connected within $\Z^d\setminus A_2$ (except possibly for the endpoints of the path connecting $A_1$ and $A_3$ which is allowed to belong to $A_2$).

	\begin{proof}[\bf Proof of Theorem~\ref{thm.IIC}: the lower bound]	As already mentioned, using the definition of the IIC and translation invariance we have 
		\begin{align*}
			\pr{\cC_\infty(z)\cap A\neq \emptyset} & = \lim_{\|w\|\to\infty} \prcond{z\longleftrightarrow A}{z\longleftrightarrow w}{} = \lim_{\|w\|\to\infty} \frac{\pr{z\longleftrightarrow A, z \longleftrightarrow w}}{\pr{w\longleftrightarrow z}} \\
			& =\lim_{\|w\|\to\infty} \frac{\pr{0\longleftrightarrow z-A, 0 \longleftrightarrow w}}{\tau(w)}.
		\end{align*}
		Let $z$ be such that $\|z\|>4\cdot \max_{a\in A} \|a\|$ and fixed for now. The goal is to show that taking $\|w\|$ large  we get 
		\begin{align}\label{eq:goalforlowerbound}
			\pr{0\longleftrightarrow z-A, 0 \longleftrightarrow w} \gtrsim  \tau(z) \cdot \pcap{A} \cdot \|z\|^2 \cdot \tau(w) \asymp \|z\|^{4-d} \cdot \pcap{A} \cdot \tau(w),
		\end{align}
		where the last equivalence follows from~\eqref{twopoint}. 
		For a possible realisation $H$ of the  extended cluster of the origin, we let $B= H\cap B(0,r)$ (which on the event $\ce{\ball{0}{r}}=H$, is just equal to $\mathcal C(0;B(0,r))$), and we say that $H$ is good, if $|\text{Reg}'(B)|\geq \frac{\varepsilon(K)}{K}\cdot r^2$, with $\varepsilon(K)$ as in Claim~\ref{claim:Klinegood} (note that $\text{Reg}'(B)$ is entirely determined by $H$). 
		To prove~\eqref{eq:goalforlowerbound}, let $r=\|z\|/2$ and observe that 
		\begin{align}\label{eq:decompose}
			\pr{0\longleftrightarrow z-A, 0 \longleftrightarrow w} \geq \sum_{H \text{ good}} \pr{0\longleftrightarrow z-A, 0\longleftrightarrow w, \ce{\ball{0}{r}} = H}.
		\end{align}
		Furthermore, for any fixed $H$, one has
		\begin{align*}
			&	\pr{0\longleftrightarrow z-A,\  0\longleftrightarrow w, \ \ce{\ball{0}{r}}=H} \\
			& \geq  \pr{z-A\stackrel{\text{off } H}{\longleftrightarrow} X_r(B)\cup L(B),\  w\stackrel{\text{off }  H}{\longleftrightarrow} \text{Reg}'(B),\  \ce{\ball{0}{r}}=H} \\
			& =\pr{z-A\stackrel{\text{off } H}{\longleftrightarrow} X_r(B)\cup L(B),\  w\stackrel{\text{off }  H}{\longleftrightarrow} \text{Reg}'(B)}\cdot \pr{\ce{\ball{0}{r}}=H},
		\end{align*}	
		where for the last equality we used the independence between the two events as they depend on disjoint sets of edges. Since the events $\{z-A \stackrel{\text{off } H}{\longleftrightarrow} X_r(B)\cup L(B)\}$ and $\{w\stackrel{\text{off } H}{\longleftrightarrow} \text{Reg}'(B)\}$ are increasing, we can apply the FKG inequality~\eqref{FKGineq} and deduce
		\begin{align}\label{eq:zandb}
			\begin{split}
				&\pr{0\longleftrightarrow z-A, 0\longleftrightarrow w, \ \ce{\ball{0}{r}}=H} \\
				&\geq \pr{z-A\stackrel{\text{off } H}{\longleftrightarrow} X_r(B)\cup L(B)}\cdot \pr{ w\stackrel{\text{off } H}{\longleftrightarrow} \text{Reg}'(B)}\cdot \pr{\ce{\ball{0}{r}}=H}\\&=\pr{ w\stackrel{\text{off } H}{\longleftrightarrow} \text{Reg}'(B)} \cdot 
				\pr{z-A\stackrel{\text{off } H}{\longleftrightarrow}X_r(B)\cup L(B),\ \ce{\ball{0}{r}}=H}\\
				&=\pr{ w\stackrel{\text{off } H}{\longleftrightarrow} R'(B)} \cdot 
				\pr{0\longleftrightarrow z-A,\ \ce{\ball{0}{r}}=H},
			\end{split}
		\end{align}
		where for the last equality we used that the only way for $z-A$ to be connected to $0$ is by having a connection to $X_r(B)\cup L(B)$ off $H$. 
		We next prove that 
		\begin{align}\label{eq:goalforw}
			\pr{ w\stackrel{\text{off } H}{\longleftrightarrow} R'(B)} \geq c(K) \cdot \pr{ w\stackrel{}{\longleftrightarrow} R'(B)},
		\end{align}	
		where $c(K)$ is a constant depending on $K$. 
		To this end assign an ordering to the points of $\text{Reg}'(B)$, and on the event $\{w\stackrel{\text{off } H}{\longleftrightarrow} \text{Reg}'(B)\}$, let $Y$ be the first point in the ordering such that $w$ is connected to the boundary of the ball of radius $K$ around this point off $H$. For $y\in \text{Reg}'(B)$, let $L_y$ be the line segment of length $K$ from $y$ to $\partial B(0,r)$ (which by definition is part of $H$), and let $$H^* = H\setminus (\cup_{y\in \text{Reg}'(B)}L_y).$$  
		We claim that
		\begin{equation}\label{eq:firstballhit}
			\pr{w\stackrel{\text{off } H}{\longleftrightarrow} \text{Reg}'(B) } \geq  \sum_{y\in \text{Reg}'(B)} \pr{Y=y, w\stackrel{\text{off } H^*}{\longleftrightarrow} \ball{y}{K},  \text{ all edges in } \ball{y}{K} \text{ are open}}.
		\end{equation}
		Indeed, notice that on the event in the probability above, there exists an open path connecting $w$ to $B(y,K)$ off $H$, since the lines $L_y$ are part of the balls $B(y,K)$. Let~$z$ be the vertex on $\partial B(y,K)$ that $w$ is connected to. Since all edges of $B(y,K)$ are open, this implies that there is an open path inside $B(y,K)$ connecting $z$ to $y$ without intersecting $L_y$, and hence this means that $w\stackrel{\text{off } H}{\longleftrightarrow} \text{Reg}'(B)$. 
		
		Since the two events appearing in the probability on the right hand side of~\eqref{eq:firstballhit} are independent, they factorise, and hence we get with $c(K)=p_c^{d(2K+1)^{d}}$,  
		\begin{align}\label{eq:hitoffb}
			\nonumber			\pr{w\stackrel{\text{off }H}{\longleftrightarrow} \text{Reg}'(B)}&\geq  c(K)\cdot \sum_{y\in \text{Reg}'(B)} \pr{Y=y, w\stackrel{\text{off } H^*}{\longleftrightarrow}  \ball{y}{K}}\\
			& = c(K) \cdot \mathbb P\Big(w\stackrel{\text{off }H^*}{\longleftrightarrow} \bigcup_{y\in \text{Reg}'(B)}B(y,K)\Big)
			\geq c(K)\cdot \pr{w\stackrel{\text{off }H^*}{\longleftrightarrow} \text{Reg}'(B)},
		\end{align}
		where the last inequality follows from the fact that to reach $\text{Reg}'(B)$, the cluster of $w$ must necessarily first reach $\cup_{y\in \text{Reg}'(B)}\partial B(y,K)$.
		It remains to bound the last probability above. First we write 
		\begin{equation}\label{eq:hitofftildb}
			\pr{w\stackrel{\text{off }H^*}{\longleftrightarrow} \text{Reg}'(B)} = \pr{w\longleftrightarrow \text{Reg}'(B)} -  \pr{w\stackrel{\text{via }H^*}{\longleftrightarrow} \text{Reg}'(B)}, 
		\end{equation}
		where ``via $H^*$" means using a path of open edges intersecting $H^*$.  
		Next by the BK inequality~\eqref{BKineq} and by taking $w$ sufficiently large we get,
		\begin{align}\label{eq:boundviabtilde}
			\nonumber		\pr{ w\stackrel{\text{via } H^*}{\longleftrightarrow} \text{Reg}'(B)}& \leq \sum_{b\in H^*} \pr{w\longleftrightarrow b} \cdot \pr{b\longleftrightarrow \text{Reg}'(B)} \lesssim \tau(w) \cdot \sum_{b\in H^*} \pr{b\longleftrightarrow \text{Reg}'(B)} \\ 
			&	\lesssim \tau(w) \cdot \sum_{x'\in \text{Reg}'(B)} \sum_{b\in H^*} \tau(x'-b), 
		\end{align}
		where the last inequality follows by a union bound. 
		We set for $t>0$
		\[
		B_t(x') = H^*\cap \ball{x'}{2^t}\setminus \ball{x'}{2^{t-1}}.
		\]
		Then the above sum can be split as follows, using~\eqref{twopoint}, 
		\begin{align*}
			\sum_{x'\in \text{Reg}'(B)} \sum_{b\in H^*} \tau(x'-b) \asymp \sum_{x'\in \text{Reg}'(B)}\sum_{t=0}^{\log_2 (2r)} \frac{|B_t(x')|}{2^{t(d-2)}}.
		\end{align*}
		Recalling that $H^*$ is at distance $K$ from $x'$, we see that 
		$B_t(x')=\emptyset$, for $t< \log_2(K)$.
		For $t\geq \log_2(K)$ writing $x\in \Reg{B}$ for the point on $\partial \ball{0}{r}$
		corresponding to $x'$ and using the density condition for $x$, i.e.\ that the event $\cT_s(x)$ holds, since $x\in \Reg{B}$, we obtain (taking also into account points in $L(B)$ which may be part of $H^*$), 
		\[
		|B_t(x')| \leq  |\ball{x}{2^{t+1}}\cap H^* | \leq (2^{t+1})^4(\log(2^{t+1}))^7+K(2^{t+1})^2 (\log(2^{t+1}))^7 \asymp t^7\cdot (2^{4t} + K \cdot 2^{2t}), 
		\]
		Inserting this bound in the previous sum, this yields 
		\begin{align*}
			\sum_{x'\in \text{Reg}'(B)} \sum_{b\in H^*} \tau(x'-b) & \asymp \sum_{x'\in \text{Reg}'(B)}\sum_{t=\log_2(K)}^{\log_2(2r)} \frac{|B_t(x')|}{2^{t(d-2)}}	\lesssim 
			\sum_{x'\in \text{Reg}'(B)}\sum_{t=\log_2(K)}^{\infty} \frac{t^7\cdot (2^{4t} + K \cdot 2^{2t})}{2^{t(d-2)}}\\ & \lesssim \frac{1}{\sqrt K} |\text{Reg}'(B)|, 
		\end{align*}
		where for the last inequality we use that $d>6$. 
		Plugging this bound into~\eqref{eq:boundviabtilde}
		we deduce 
		\begin{align}\label{eq:hitviatildbsmall}
			\pr{w\stackrel{\text{via }H^*}{\longleftrightarrow} \text{Reg}'(B)}\lesssim \frac{1}{\sqrt K} \cdot \tau(w) \cdot |\text{Reg}'(B)|.
		\end{align}
		On the other hand, for $\|w\|$ sufficiently large we get from Lemma~\ref{lem:hitdiameterdistance}
		\begin{align*}
			\pr{ w{\longleftrightarrow} \text{Reg}'(B)}\gtrsim \tau(w) \cdot \cpc{d-4}{\text{Reg}'(B)}\gtrsim \tau(w) \cdot |\text{Reg}'(B)|,
		\end{align*}
		where for the second inequality we used Claim~\ref{cl:lowerboundoncapacity}, since the set $\text{Reg}'(B)$ satisfies the density condition of the statement of the claim by definition.
		Plugging the above together with~\eqref{eq:hitviatildbsmall} into~\eqref{eq:hitofftildb} and taking $K$ sufficiently large gives
		\begin{equation}\label{eq:goalforw2}
			\pr{w\stackrel{\text{off }H^*}{\longleftrightarrow} \text{Reg}'(B)} \gtrsim \tau(w) \cdot |\text{Reg}'(B)|.
		\end{equation}
		Finally, substituting this lower bound into~\eqref{eq:hitoffb}, and using that by a union bound we also have $\mathbb P(w\longleftrightarrow \text{Reg}'(B))\lesssim \tau(w) \cdot |\text{Reg}'(B)|$, this concludes the proof of~\eqref{eq:goalforw}. Plugging~\eqref{eq:goalforw} into~\eqref{eq:zandb} and then into~\eqref{eq:decompose} and using the definition of $H$ good gives 
		\begin{align*}
			\pr{0\longleftrightarrow z-A,0\longleftrightarrow w} \gtrsim  \tau(w) \cdot r^2 \cdot \pr{0\longleftrightarrow z-A,\  \ce{\ball{0}{r}} \text{ is good}}.
		\end{align*}
		It remains to bound this last probability above. Recalling the definition of a good extended cluster, i.e.\ a cluster such that $X_r^{K-\line}\geq \frac{\epsilon(K)}{K} \cdot r^2$, with $\epsilon(K)$ as in Claim~\ref{claim:Klinegood}, for $\delta>0$ to be determined and $K>2/\delta$, we have 
		\begin{align*}
			&\pr{0\longleftrightarrow z-A, \ce{\ball{0}{r}} \text{ is good}} = \pr{0\longleftrightarrow z-A, X_r^{K-\line}\geq \frac{\epsilon(K)}{K}\cdot r^2} \\&= \pr{0\longleftrightarrow z-A} -\pr{0\longleftrightarrow z-A, X_r^{K-\line}< \frac{\epsilon(K)}{K}\cdot r^2}\\ &\gtrsim   \tau(z)\cdot \pcap{A} -  \pr{0\longleftrightarrow z-A, 0<X_r\leq  \delta \cdot r^2} \\&\quad \quad \quad \quad \quad \quad \quad -  \pr{0\longleftrightarrow z-A, X_r>  \delta \cdot r^2, X_r^{K-\line} < \frac{\epsilon(K)}{2} \cdot X_r}, 
		\end{align*}
		where the first inequality is obtained by using Theorem~\ref{thm.defpcap} and taking $r$ large enough. 
		Concerning the first probability on the right-hand side above, note that conditionally on $\mathcal C(0;B(0,r))=B$, for $z-A$ to be connected to $0$, it must be connected to one of the pioneer points (recall that the set of pioneer points is denoted by $X_r(B)$) off $B$. Hence a union bound over these points gives, using again Theorem~\ref{thm.defpcap}, and taking $r$ large enough, 
		\begin{align*}
			\pr{0\longleftrightarrow z-A, 0<X_r\leq  \delta \cdot r^2} & = \sum_{B: 0<X_r(B)\le \delta r^2} \pr{0\longleftrightarrow z-A, \mathcal C(0;B(0,r))= B} \\
			& = \sum_{B: 0<X_r(B)\le \delta r^2} \pr{z-A\stackrel{\text{off } B}{\longleftrightarrow} X_r(B)} \cdot \mathbb P( \mathcal C(0;B(0,r))= B) \\
			& \le \sum_{B: 0<X_r(B)\le \delta r^2} \pr{z-A \longleftrightarrow X_r(B)} \cdot \mathbb P( \mathcal C(0;B(0,r))= B) \\
			& \lesssim \delta \cdot r^2 \cdot \tau(z) \cdot \textrm{pCap}(A)\cdot \mathbb P(X_r\ge 1).
		\end{align*}
		Using next the one arm exponent~\eqref{onearm} we obtain
		\begin{align*}
			\pr{0\longleftrightarrow z-A, 0<X_r\leq  \delta \cdot r^2} \lesssim  \delta \cdot \tau(z) \cdot \pcap{A}.
		\end{align*}
		Similarly, by conditioning on the configuration in $\ball{0}{r}$ as before, we get 
		\begin{align*}
			& \pr{0\longleftrightarrow z-A, X_r>  \delta \cdot r^2, X_r^{K-\line} < \frac{\epsilon(K)}{2} \cdot X_r} \\ &\lesssim  \E{X_r\cdot \1(X_r>  \delta \cdot r^2, X_r^{K-\line} < \frac{\epsilon(K)}{2} \cdot X_r)\cdot \tau(z) \cdot \pcap{A}} \\
			&\lesssim r^{d-1}  \cdot \tau(z) \cdot \pcap{A} \cdot \pr{X_r>  \delta \cdot r^2, X_r^{K-\line} < \frac{\epsilon(K)}{2} \cdot X_r}\\
			&\lesssim r^{2d}\cdot \exp(-c(\log r)^4) \cdot \tau(z) \cdot \pcap{A} \lesssim \exp(-c(\log r)^4/2) \cdot \tau(z) \cdot \pcap{A},
		\end{align*}
		where for the second inequality we used the crude bound $X_r\lesssim r^{d-1}$, and for the last one we used Proposition~\ref{thm:linegoodpoints} and Claim~\ref{claim:Klinegood}. Taking $\delta$ sufficiently small, $r$ sufficiently large and using the two bounds above we finally conclude that
		\begin{align*}
			\pr{0\longleftrightarrow z-A, 0\longleftrightarrow w} \gtrsim  \tau(w) \cdot r^2 \cdot \tau(z)\cdot \pcap{A},
		\end{align*}
		and this finishes the proof.
	\end{proof}

	\section{p-Capacity of a ball}\label{sec:pCapball}
	\label{sec:pcapball}
	In this section we prove Theorem~\ref{thm.pcap.ball}. We first introduce some notation and state a result whose proof we defer to Section~\ref{sec:pointsinannulus}.

	Let $z\in \Z^d$. 
	For $r\ge 1$ we write $\partial B(0,r)^c$ for the (inner) boundary of $B(0,r)^c$, i.e.~the set of points in $B(0,r)^c$ which have at least one neighbor in $B(0,r)$, and define for $L<r<\|z\|$, 
	\[
	X_r = \left|\{x\in \partial B(0,r)^c: z\stackrel{\text{off }B(0,r)}{\longleftrightarrow x}\}\right| \quad \text{ and }\quad  A_{r,L} = \left|\{x\in \ball{0}{r}\setminus \ball{0}{r-L}: z\longleftrightarrow  x\}\right|.
	\]
	The next proposition is the core of the whole proof of Theorem~\ref{thm.pcap.ball}, which is done using an induction on the radius of the ball and follows very closely the proof of Lemma~2.3 in~\cite{KN11}.
	\begin{proposition}\label{thm:2}
		There exists a constant $c>0$ so that for every constant $c'>0$, all $r$ sufficiently large, all $L\geq c'r$, we have for all $z$ with $\|z\|> r$, 
		\[
		\pr{X_{r}\geq L^2 \  \text{ and } \ A_{r,L}\leq cL^4} \leq (1-c)\cdot \pr{\cC(z)\cap B(0,r)\neq \emptyset}.
		\]
	\end{proposition}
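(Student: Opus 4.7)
My plan is to mirror the argument of Theorem~\ref{thm:3}, reversing the geometric roles so that $z$ plays the part that the origin played there and the pioneers now live on $\partial B(0,r)^c$ with the attached line-good segments extending \emph{inward} into $B(0,r)$. Write $B=\cC(z;B(0,r)^c)$ for the cluster of $z$ inside $B(0,r)^c$, define its regular pioneers on $\partial B(0,r)^c$ by the obvious analogue of Definition~\ref{def.Kreg} (with every density condition formulated in $B(0,r)^c$ instead of $B(0,r)$), attach to each regular pioneer $x$ an orthogonal line segment of length $K$ pointing into $B(0,r)$ and call $\ce{B(0,r)^c}=B\cup L(B)$ the resulting extended cluster. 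A pioneer is line-good if its segment is entirely open, and I write $\text{Reg}'(B)$ for the far endpoints of these open segments (so $\text{Reg}'(B)\subset B(0,r-K)$). By Theorem~\ref{thm.LD} applied in the half-space locally tangent to $\partial B(0,r)^c$ at each pioneer, the same exploration-with-boxes argument as in Proposition~\ref{thm:linegoodpoints} yields
\[
\pr{X_r\ge L^2,\ X_r^{K-\mathrm{line}}\le \varepsilon(K)L^2}\le C r^{d-1}\exp(-c(\log L)^4)
\]
for some constants $\varepsilon(K),c,C>0$.

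For $L\ge 10K$ set $\mathcal S_L^{\mathrm{in}}=B(0,r-L/2)\setminus B(0,r-L)$ and define
\[
Z_{r,L}=\sum_{y\in\mathcal S_L^{\mathrm{in}}}\mathbf 1\bigl(y\stackrel{\text{off }\ce{B(0,r)^c}}{\longleftrightarrow}\text{Reg}'(B)\cap B(y,L)\bigr),
\]
so that by construction $A_{r,L}\ge Z_{r,L}$. Conditioning on $\ce{B(0,r)^c}=H$ and exploiting the independence of edges inside $B(0,r)$ from those outside, I would factorise as in Lemma~\ref{lem:firstmomentofy1} and bound the inner probability $\pr{y\stackrel{\text{off }H}{\longleftrightarrow}\text{Reg}'(B)\cap B(y,L)}$ by splitting it as $\pr{y\longleftrightarrow\text{Reg}'(B)\cap B(y,L)}-\pr{y\stackrel{\text{via }H^*}{\longleftrightarrow}\cdots}$: the first term is $\gtrsim L^{2-d}|\text{Reg}'(B)\cap B(y,L)|$ by Lemma~\ref{lem:hitdiameterdistance} and Claim~\ref{cl:lowerboundoncapacity}, while the second is $\lesssim K^{-1/2}$ times the first by BK combined with the $K$-regularity density estimate. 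This yields $\mathbb E[Z_{r,L}\mathbf 1\{X_r^{K-\mathrm{line}}=M\}]\gtrsim L^2 M\cdot\pr{X_r^{K-\mathrm{line}}=M}$, and the analogous five-way decomposition with two intersection points $u,v$, together with BK and Claim~\ref{claim.G}, delivers the matching second moment bound $\lesssim L^4 M^2\cdot\pr{X_r^{K-\mathrm{line}}=M}$ exactly as in Lemma~\ref{lem:secondmomentofy1}.

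Applying Paley--Zygmund conditionally on $\{X_r^{K-\mathrm{line}}=M\}$ for $M\ge\varepsilon(K)L^2$ then produces a constant $c_2>0$ such that
\[
\pr{X_r\ge L^2,\ A_{r,L}\le\rho L^4}\le Cr^{d-1}e^{-c(\log L)^4}+(1-c_2)\pr{X_r^{K-\mathrm{line}}\ge 1}.
\]
The new ingredient, replacing the one-arm lower bound used at the end of Theorem~\ref{thm:3}, is that every line-good pioneer has its entirely open length-$K$ segment lying \emph{inside} $B(0,r)$, so $\{X_r^{K-\mathrm{line}}\ge 1\}\subseteq\{\cC(z)\cap B(0,r)\ne\emptyset\}$ and hence $\pr{X_r^{K-\mathrm{line}}\ge 1}\le \pr{\cC(z)\cap B(0,r)\ne\emptyset}$. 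To absorb the error $Cr^{d-1}e^{-c(\log L)^4}$ into $\tfrac{c_2}{2}\pr{\cC(z)\cap B(0,r)\ne\emptyset}$ I would use the direct two-point-function lower bound $\pr{\cC(z)\cap B(0,r)\ne\emptyset}\gtrsim r^{2-d}$ when $\|z\|\le 2r$, and $\pr{\cC(z)\cap B(0,r)\ne\emptyset}\gtrsim \|z\|^{2-d}r^{d-4}$ (from Lemma~\ref{lem:hitdiameterdistance} combined with $\mathrm{Cap}_{d-4}(B(0,r))\asymp r^{d-4}$) when $\|z\|\ge 2r$; since $L\ge c'r$ the error is super-polynomially small in $r$, and for very large $\|z\|$ both sides scale proportionally to $\tau(z)$, so the absorption is uniform once $r$ is large.

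The main technical obstacle is checking that the exploration-with-boxes proof of Proposition~\ref{thm:linegoodpoints} carries over cleanly to this dual geometry, namely exploration of $\cC(z;B(0,r)^c)$ from far away with the slightly curved convex boundary $\partial B(0,r)^c$ in place of the flat face of a cube. Locally each pioneer sits on a face well-approximated by a half-space, so the half-space input from Theorem~\ref{thm.LD} is precisely what is required; the remaining work is to set up the box partition near the curved boundary so that the Binomial concentration argument in the proof of Proposition~\ref{thm:linegoodpoints} is reproduced verbatim.
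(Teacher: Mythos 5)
Your overall plan — reversing the geometry so that the pioneers live on $\partial B(0,r)^c$, attaching inward-pointing line segments, carrying out the same first- and second-moment bounds on $Z_{r,L}$, and observing that $\{X_r^{K-\text{line}}\ge 1\}\subseteq\{\cC(z)\cap B(0,r)\ne\emptyset\}$ — matches the structure of the paper's proof. But there is a genuine gap in how you absorb the error from the regularity step, and it is not a technicality.

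You claim the exploration argument of Proposition~\ref{thm:linegoodpoints} transfers verbatim and gives
\[
\pr{X_r\ge L^2,\ X_r^{K-\mathrm{line}}\le \varepsilon(K)L^2}\le C r^{d-1}\exp(-c(\log L)^4),
\]
and then you try to absorb the right-hand side into $\tfrac{c_2}{2}\pr{\cC(z)\cap B(0,r)\ne\emptyset}$. The problem is that your bound is a function of $r$ and $L$ only, while $\pr{\cC(z)\cap B(0,r)\ne\emptyset}\asymp \tau(z)\,r^{d-4}\to 0$ as $\|z\|\to\infty$ for fixed $r$. So for any fixed $r,L$ the desired inequality fails once $\|z\|$ is large enough. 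Your assertion that "for very large $\|z\|$ both sides scale proportionally to $\tau(z)$" is incorrect: the left side as you stated it has no $\tau(z)$ dependence at all. The statement of Proposition~\ref{thm:2} is required to hold for \emph{all} $\|z\|>r$, so this is a real failure, not a corner case.

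The fix the paper uses, and which your write-up misses, is that the large-deviation bound for the number of irregular pioneers of $\cC(z;B(0,r)^c)$ must itself carry a factor of $\tau(z)$; this is exactly the content of Proposition~\ref{thm:localgoodforball}, which replaces Proposition~\ref{thm:linegoodpoints} in this setting and gives a bound of the form $\tau(z)\cdot r^{2d-2}\cdot\exp(-\alpha(K)(\log M)^4)$. The extra $\tau(z)$ comes from first conditioning on the event $\{X_R\ge 1\}$ (that the cluster of $z$, restricted to $B(0,R)^c$ for the slightly larger $R=r+4s^d$, reaches $\partial B(0,R)^c$), which by a union bound already costs $\tau(z)R^{d-1}$, and then running the box-exploration \emph{from the hitting set $W$ on $\partial B(0,R)^c$} rather than from a single seed. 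One also has to treat separately the case $R>\|z\|$, where the conditioning is unavailable but $s$ is then comparable to $\|z\|^{1/d}$, so the exponential in $(\log s)^4$ can be traded for $\tau(z)$. Without this step your argument only establishes the proposition for $\|z\|$ in a bounded window above $r$.

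A minor remark: your concern about the "slightly curved convex boundary" is unfounded — with the $\ell^\infty$ ball, $\partial B(0,r)^c$ is still a union of flat faces, so the half-space input of Theorem~\ref{thm.LD} applies exactly as before. The genuine new difficulty is the $\tau(z)$ factor, not the geometry of the boundary.
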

	
We defer the proof of the above proposition to Section~\ref{sec:pointsinannulus}.

\subsection{Proof of Theorem~\ref{thm.pcap.ball}}

In this section we give the proof of Theorem~\ref{thm.pcap.ball} after stating and proving two preliminary results that could be of independent interest. We note that alternatively it is also possible to use an induction argument similar to the proof of~\cite[Theorem~1]{KN11}, but we do not include it here.

	\begin{proposition}\label{pro:green}
		Let $\lambda>0$. For each $R>0$ let $\cG_{\lambda,R}$ be the random set obtained by keeping each point of $\ball{0}{R}$ independently with probability $1-e^{-\lambda}$. We then have 
		\[
		\E{\pcap{\cG_{\lambda,R}}} = \pcap{\ball{0}{R}} \cdot \left(1 - \lim_{\|z\|\to\infty}\econd{e^{-\lambda |\cC(z)\cap \ball{0}{R}|}}{z\longleftrightarrow \ball{0}{R}}   \right).
		\]
	\end{proposition}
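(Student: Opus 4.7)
The plan is to compute $\pr{z\longleftrightarrow \cG_{\lambda,R}}$ in the joint probability space of the percolation and the independent Bernoulli thinning $\cG_{\lambda,R}$ in two different ways, and then divide by $\tau(z)$ and send $\|z\|\to\infty$.

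First I would condition on the percolation cluster $\cC(z)$. Since the thinning is independent of the percolation and keeps each point of $\ball{0}{R}$ with probability $1-e^{-\lambda}$, the conditional probability that $\cG_{\lambda,R}$ misses $\cC(z)\cap \ball{0}{R}$ equals exactly $e^{-\lambda |\cC(z)\cap \ball{0}{R}|}$ (this is precisely the reason for choosing the keep-probability as $1-e^{-\lambda}$). The exponent vanishes on $\{z\not\longleftrightarrow \ball{0}{R}\}$, so taking expectations and isolating the indicator $\1(z\longleftrightarrow \ball{0}{R})$ yields the identity
\begin{equation}\label{eq:twoforms}
\pr{z\longleftrightarrow \cG_{\lambda,R}} = \pr{z\longleftrightarrow \ball{0}{R}} \cdot \left(1-\econd{e^{-\lambda |\cC(z)\cap \ball{0}{R}|}}{z\longleftrightarrow \ball{0}{R}}\right).
\end{equation}
By Theorem~\ref{thm.defpcap} applied to the fixed finite set $\ball{0}{R}$, the prefactor $\pr{z\longleftrightarrow \ball{0}{R}}/\tau(z)$ converges to $\pcap{\ball{0}{R}}$, which is positive and finite.

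Next I would identify the limit of the left-hand side of~\eqref{eq:twoforms} divided by $\tau(z)$ with $\E{\pcap{\cG_{\lambda,R}}}$ by conditioning on the thinning instead. For each realisation $G$ of $\cG_{\lambda,R}$, Theorem~\ref{thm.defpcap} applied to $G$ gives $\pr{z\longleftrightarrow G}/\tau(z)\to \pcap{G}$ as $\|z\|\to\infty$, so the random quantity $\pr{z\longleftrightarrow \cG_{\lambda,R}\mid \cG_{\lambda,R}}/\tau(z)$ converges almost surely (in the thinning) to $\pcap{\cG_{\lambda,R}}$. The inclusion $\cG_{\lambda,R}\subseteq \ball{0}{R}$ provides the deterministic pointwise upper bound
\[
\frac{\pr{z\longleftrightarrow \cG_{\lambda,R}\mid \cG_{\lambda,R}}}{\tau(z)} \le \frac{\pr{z\longleftrightarrow \ball{0}{R}}}{\tau(z)},
\]
and the right-hand side is bounded (say by $2\pcap{\ball{0}{R}}$) once $\|z\|$ is large enough, since it converges. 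Dominated convergence then gives
\[
\frac{\pr{z\longleftrightarrow \cG_{\lambda,R}}}{\tau(z)} = \E{\frac{\pr{z\longleftrightarrow \cG_{\lambda,R}\mid \cG_{\lambda,R}}}{\tau(z)}} \xrightarrow[\|z\|\to\infty]{} \E{\pcap{\cG_{\lambda,R}}}.
\]

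Combining this with~\eqref{eq:twoforms} and the limit of the prefactor, and dividing through by $\pcap{\ball{0}{R}}>0$, forces the conditional-expectation limit in~\eqref{eq:twoforms} to exist and produces the stated formula. The only substantive step is the dominated-convergence interchange in the second paragraph, which is immediate from monotonicity of the event $\{z\longleftrightarrow \cdot\}$ under set inclusion together with the convergence supplied by Theorem~\ref{thm.defpcap}; I do not anticipate a genuine obstacle.
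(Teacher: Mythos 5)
Your proof is correct and follows essentially the same route as the paper's: one conditioning on $\cC(z)$ to obtain the identity~\eqref{eq:twoforms}, and one conditioning on $\cG_{\lambda,R}$ to identify the limit with $\E{\pcap{\cG_{\lambda,R}}}$. The only difference is that you justify the limit-expectation interchange via dominated convergence, whereas the paper leaves it implicit; in fact, since $\ball{0}{R}$ is finite, $\cG_{\lambda,R}$ takes only finitely many values, so the interchange is immediate without invoking domination.
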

	
	\begin{proof}[\bf Proof]
		
		Using that each point of $\ball{0}{R}$ is included in $\cG_{\lambda,R}$ independently we obtain by conditioning on $\cC(z)$
		\begin{align}\label{eq:noninters}
			\begin{split}
				\pr{\cC(z)\cap \cG_{\lambda,R}\neq \emptyset}= 1 - \E{\prcond{\cC(z)\cap \cG_{\lambda,R}= \emptyset}{\cC(z)}{}} =1 - \E{e^{-\lambda  |\cC(z)\cap \ball{0}{R}|}}\\
				=\pr{z\longleftrightarrow \ball{0}{R}}\left( 1 - \econd{e^{-\lambda  |\cC(z)\cap \ball{0}{R}|}}{z\longleftrightarrow \ball{0}{R}}\right).
			\end{split}
		\end{align}
		By conditioning now with respect to $\cG_{\lambda,R}$ we get 
		\begin{align*}
			\lim_{\|z\|\to\infty} \frac{1}{\tau(z)} \cdot \pr{\cC(z)\cap \cG_{\lambda,R}\neq \emptyset} =	\lim_{\|z\|\to\infty} \frac{1}{\tau(z)} \cdot \E{\prcond{\cC(z)\cap \cG_{\lambda,R}\neq \emptyset}{\cG_{\lambda,R}}{}} = \E{\pcap{\cG_{\lambda,R}}}.
		\end{align*}
		Using the above and plugging it into~\eqref{eq:noninters} after we first divide both sides by $\tau(z)$ gives
		\begin{align*}
			\E{\pcap{\cG_{\lambda,R}}} = \pcap{\ball{0}{R}}\cdot \left(1 - \lim_{\|z\|\to\infty}\econd{e^{-\lambda  |\cC(z)\cap \ball{0}{R}|}}{z\longleftrightarrow \ball{0}{R}}   \right)
		\end{align*}
		and this concludes the proof.
	\end{proof}

	\begin{lemma}\label{lem:rsquaredwhenithits}
		There exists $\delta>0$ so that for all $R$ and for all $\|z\|$ sufficiently large we have 
		\[
		\prcond{X_R>\delta R^2}{X_R> 0}{} \geq \delta.
		\]
	\end{lemma}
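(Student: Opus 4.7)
The plan is to apply a conditional second-moment (Paley--Zygmund) argument to $X_R$ given $\{X_R>0\}$. Writing $p=\pr{X_R>0}$, I will establish the two moment estimates
\[
\E{X_R}\gtrsim R^{d-2}\tau(z)\qquad \text{and}\qquad \E{X_R^2}\lesssim R^{d}\tau(z),
\]
and combine them with the lower bound $p\gtrsim R^{d-4}\tau(z)$ coming from Lemma~\ref{lem:hitdiameterdistance}.

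For the first-moment lower bound, I will use that any interior connection from $z$ into $B(0,R)$ must cross $\partial B(0,R)^c$ through some outer pioneer. The BK inequality~\eqref{BKineq} gives, for $y\in B(0,R)$,
\[
\pr{z\longleftrightarrow y}\leq \sum_{u\in \partial B(0,R)^c}\pr{z\stackrel{\text{off } B(0,R)}{\longleftrightarrow}u}\cdot \tau(u-y).
\]
Summing over $y\in B(0,R)$ and using $\sum_{y\in B(0,R)}\tau(u-y)\lesssim R^2$ for $u\in \partial B(0,R)^c$ yields $\E{|\cC(z)\cap B(0,R)|}\lesssim R^2\cdot \E{X_R}$. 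Since the left-hand side equals $\sum_y\tau(z-y)\asymp R^d\tau(z)$, the first-moment bound follows.

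For the second-moment upper bound, I will apply BK inside the restricted domain $H=B(0,R)^c$ together with the half-space two-point function estimate $\tau_H(x,y)\lesssim h(x,y)$ of Lemma~\ref{lem.LD} (following Panis~\cite{Romain}), obtaining
\[
\E{X_R^2}\leq \sum_{u\in H}\tau_H(z,u)\Bigl(\sum_{x\in \partial H}\tau_H(u,x)\Bigr)^2.
\]
The key calculation is that $\sum_{x\in \partial H}\tau_H(u,x)\lesssim \min\bigl(1,\ R^{d-1}/d(u,\partial H)^{d-1}\bigr)$, which is much smaller than the full-space analog. A case analysis on whether $u$ is near the ball, in an intermediate range, or close to $z$, together with $\tau_H(z,u)\leq \tau(z-u)$, then delivers the required estimate.

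Combining the two moment bounds with the lower bound on $p$, the Paley--Zygmund inequality conditional on $\{X_R>0\}$ gives the stated conclusion. \emph{The main obstacle} is that this argument in fact only produces the absolute bound $\pr{X_R\geq \delta R^2}\gtrsim R^{d-4}\tau(z)$; turning this into the relative bound $\geq \delta p$ requires a matching upper bound $p\lesssim R^{d-4}\tau(z)$, which is essentially the content of Theorem~\ref{thm.pcap.ball} itself. The proof must therefore be threaded with an inductive argument on $R$, based on Proposition~\ref{thm:2}, that establishes this upper bound on $p$ in parallel.
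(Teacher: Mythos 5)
Your proposal is not a complete proof: you correctly identify the fundamental obstruction yourself, but do not resolve it. The moment-method/Paley--Zygmund route needs the matching \emph{upper} bound $\pr{X_R>0}\lesssim R^{d-4}\tau(z)$ in order to convert $\E{X_R}\gtrsim R^{d-2}\tau(z)$ into a lower bound on $\E{X_R\1(X_R>\delta R^2)}$; without it you cannot control $\E{X_R\1(X_R\le\delta R^2)}$ on the event $\{X_R>0\}$. (In fact, even the ``absolute'' bound $\pr{X_R\geq\delta R^2}\gtrsim R^{d-4}\tau(z)$ you claim to obtain without further input already needs this upper bound, so the proposal slightly overstates what the argument delivers.) Since the upper bound is essentially Theorem~\ref{thm.pcap.ball}, and the lemma is one of the ingredients used to prove that theorem, your sketch is circular as stated; gesturing at ``threading an inductive argument through Proposition~\ref{thm:2}'' is a plausible alternative structure (indeed the paper remarks that an induction in the style of~\cite{KN11} is possible), but you do not carry it out.

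The paper avoids the issue entirely, by a different and lighter argument that makes no use of moment computations or of any quantitative rate for $\pr{X_R>0}$. It compares the two scales $R$ and $2R$: (i) a covering of $B(0,2R)$ by $O(1)$ translates of $B(0,R)$, combined with Theorem~\ref{thm.defpcap} and translation invariance of $\textrm{pCap}$, gives $\pr{X_{2R}>0}\lesssim\pr{X_R>0}$; (ii) conditioning on the cluster $\cC(z;B(0,2R)^c)$, decoupling by independence, and the one-arm estimate~\eqref{onearm} give $\pr{X_R>0,\,X_{2R}<\delta R^2}\lesssim\delta\,\pr{0<X_{2R}<\delta R^2}$. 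Since $\{X_R>0\}\subseteq\{X_{2R}>0\}$, combining (i) and (ii) with the trivial decomposition of $\{X_R>0\}$ and choosing $\delta$ small yields the stated conditional lower bound directly. This relative (two-scale) comparison sidesteps precisely the circularity you identify, so you should adopt it; otherwise you would have to rebuild the inductive machinery of \cite{KN11} from scratch just to prove this one lemma. As a secondary remark, the half-space estimate of Lemma~\ref{lem.LD} does not apply verbatim to $\Lambda=B(0,R)^c$, and your claimed bound $\sum_{x\in\partial H}\tau_H(u,x)\lesssim \min(1,R^{d-1}/d(u,\partial H)^{d-1})$ appears to have the wrong exponent for $u$ far from the ball (a direct union bound gives exponent $d-2$, not $d-1$), so the second-moment estimate would also need more care even if the circularity were removed.
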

	
	\begin{proof}[\bf Proof]
		It is sufficient to prove that there exists a universal positive constant $C$ so that for all $\|z\|$ large enough and all $\delta>0$ we have 
		\begin{align}\label{eq:twoproofs}
			&	\pr{X_{2R}>0}\leq C \cdot \pr{X_R>0} \quad \text{ and } \\ &\label{eq:secondproof}\pr{X_R>0,X_{2R}<\delta R^2}\leq C\cdot \delta \cdot \pr{0<X_{2R}<\delta R^2}.
		\end{align}
		Indeed, once this is established, then since $X_R>0$ implies that $X_{2R}>0$, we would get 
		\begin{align*}
			\frac{1}{C}\cdot 	\pr{X_{2R}>0}\leq \pr{X_R>0} &\leq C\cdot \delta \cdot \pr{0<X_{2R}<\delta R^2} + \pr{X_{R}>0, X_{2R}>\delta R^2} \\
			&	\leq C\cdot \delta\cdot  \pr{X_{2R}>0} + \pr{X_{2R}>\delta R^2, X_{2R}>0}.
		\end{align*}
		By now choosing $\delta=1/(2C^2)$ and rearranging the above gives the desired result. We now first prove~\eqref{eq:twoproofs}. To show this, we use the elementary fact that a ball of radius $2R$ is contained in a finite union of balls of radius $R$. More precisely, there exists $K>0$ so that for all $R>0$ there exist $(a_i)_{i\leq K}$ with $|a_i|\leq 2R$ satisfying 
		\[
		\ball{0}{2R}\subseteq \bigcup_{i=1}^{K} \ball{a_i}{R}.
		\]
		Therefore, by a union bound we get 
		\begin{align*}
			\pr{z\longleftrightarrow \ball{0}{2R}} \leq \sum_{i=1}^{K}\pr{z\longleftrightarrow \ball{a_i}{R}}.
		\end{align*}
		By taking $\|z\|$ sufficiently large we see by Theorem~\ref{thm.defpcap} that for positive constants $C_1$ and $C_2$ we have 
		\begin{align*}
			C_1\cdot \tau(z) \cdot \pcap{\ball{a_i}{R}}\leq \pr{z\longleftrightarrow \ball{a_i}{R}} \leq C_2 \cdot \tau(z) \cdot \pcap{\ball{a_i}{R}}.
		\end{align*}
		But by translation invariance of $\pcap{\cdot}$ we have that $\pcap{\ball{a_i}{R}} = \pcap{\ball{0}{R}}$ for all $i$, and hence using once more Theorem~\ref{thm.defpcap} we deduce 
		\begin{align*}
			\tau(z) \cdot \pcap{\ball{0}{R}} \leq \frac{1}{C_1} \cdot \pr{z\longleftrightarrow \ball{0}{R}}.
		\end{align*}
		Combining all of the above finally shows~\eqref{eq:twoproofs}.  It remains to prove~\eqref{eq:secondproof}. Let $C$ be a large positive constant. Then 
		\begin{align*}
			&\pr{X_{2R}<\delta R^2, X_R>0, |\cC(z;\ball{0}{2R}^c)|\leq C} 
			\\&= \sum_{\substack{|A|\leq C \\0< |A\cap \partial\ball{0}{2R}|<\delta R^2 }} \prcond{X_R>0}{\cC(z; \ball{0}{2R}^c)=A}{} \times \pr{\cC(z;\ball{0}{2R}^c)=A}.
		\end{align*}
		We now have 
		\begin{align*}
			&\prcond{X_R>0}{\cC(z; \ball{0}{2R}^c)=A}{} \\&=\prcond{\exists \ x\in A\cap \partial \ball{0}{2R}: x\stackrel{\text{off } A}{\longleftrightarrow} \ball{0}{R}}{\cC(z;\ball{0}{2R}^c)=A}{} \\
			&= \pr{\exists \ x\in A\cap \partial \ball{0}{2R}: x\stackrel{\text{off } A}{\longleftrightarrow} \ball{0}{R}}
			\lesssim|A\cap \partial \ball{0}{2R}| \cdot \frac{1}{R^2},
		\end{align*}
		where for the second equality we used the independence of the two events as they depend on disjoint sets of edges and for the last inequality we used a union bound together with~\eqref{onearm}. Therefore, putting everything together gives 
		\begin{align*}
			\pr{X_{2R}<\delta R^2, X_R>0, |\cC(z;\ball{0}{2R}^c)|\leq C} \lesssim \delta R^2 \cdot \frac{1}{R^2} \cdot \pr{0<X_{2R}<\delta R^2, |\cC(z;\ball{0}{2R}^c)|\leq C} \\
			= \delta \cdot \pr{0<X_{2R}<\delta R^2, |\cC(z;\ball{0}{2R}^c)|\leq C}.
		\end{align*}
		Taking the limit as $C\to\infty$ finally finishes the proof.
	\end{proof}

	\begin{proof}[\bf Proof of Theorem~\ref{thm.pcap.ball}]
		Let $\lambda=1/R^4$ and consider $\cG_{\lambda,R}$ as in the statement of Proposition~\ref{pro:green}. Then we have 
		\begin{align*}
			\E{\pcap{\cG_{\lambda,R}}} \leq \E{|\cG_{\lambda,R}|} = (1-e^{-\lambda})\cdot |\ball{0}{R}|\asymp \lambda \cdot R^d = R^{d-4}.
		\end{align*}
		Applying Proposition~\ref{pro:green} and the above we then obtain for some positive constant $\kappa$
		\begin{align*}
			\kappa \cdot R^{d-4} \geq \pcap{\ball{0}{R}} \cdot \left(1-\lim_{\|z\|\to\infty} \econd{e^{-|\cC(z)\cap \ball{0}{R}|/R^4}}{z\longleftrightarrow \ball{0}{R}}     \right).
		\end{align*}
		To conclude the proof it suffices to show that there exists a constant $c>0$ so that 
		\[
		\prcond{|\cC(z)\cap\ball{0}{R} |>c R^4}{z\longleftrightarrow \ball{0}{R}}{}\geq c.
		\]
		Letting $c$ be as in Proposition~\ref{thm:2} we obtain
		\begin{align*}
			&	\prcond{|\cC(z)\cap\ball{0}{R} |>c R^4}{z\longleftrightarrow \ball{0}{R}}{}\geq \prcond{A_{R,R/2}>cR^4}{X_R>0}{} \\ 
			&\geq \prcond{A_{R,R/2}>cR^4,X_{R}>\delta R^2}{X_R>0}{} = \frac{\pr{A_{R,R/2}>cR^4, X_R>\delta R^2}}{\pr{X_R>0}}\\
			&\geq  \frac{\pr{X_R>\delta R^2}- c\cdot \pr{X_R>0}}{\pr{X_R>0}} = (1-c)\cdot \prcond{X_R>\delta R^2}{X_R>0}{} \geq \delta \cdot (1-c),
		\end{align*}
		where for the second inequality we used Proposition~\ref{thm:2} and for the last one we used Lemma~\ref{lem:rsquaredwhenithits}. This now concludes the proof.
	\end{proof}

	\subsection{Points in an annulus -- Proof of Proposition~\ref{thm:2}}\label{sec:pointsinannulus}
	
	The proof of this proposition is based on two intermediate results, that we state and prove now. We start with some definitions regarding regularity of the cluster similarly to Section~\ref{sec.regpoints}. 
	By replacing $\ball{0}{r}$ with $\ball{0}{r}^c$ in the definitions at the beginning of Section~\ref{sec.regpoints} and before the proof of Claim~\ref{claim:Klinegood} we get the analogous definitions of global and local density conditions and also for regular and line-good points.

	Now fix a constant $K\ge 1$, and to each $x\in \partial \ball{0}{r}$ we associate a connected path $L_x$ of length at most $dK$,  starting from $x$ and contained in $B(x,K)$, such that its endpoint, say $x'$, belongs to $\partial B(0,r-K)$.  
	Note that when $x$ is at distance at least $K$ from the boundaries of a face of the cube $B(0,r)$, one can just take for $L_x$ a line segment of length $K$ orthogonal to the face of the cube. Moreover, similarly as before, given $B$ a connected subgraph of $\ball{0}{r}^c$, we consider a maximal subset of the set of  $K$-regular points (now corresponding to $\ball{0}{r}^c$) of $B$ that are within distance at least $4K$ from each other and we call it $\Reg{B}$ (note that by construction, for any~$x,y\in \Reg{B}$ the corresponding paths $L_x$ and $L_y$ are at distance at least $2K$ from each other).  
	For each $x\in \Reg{B}$, if all the edges of $L_x$ are open, then we call $x$ a $K$-line-good point of $B$, and let $\text{Reg}'(B)$ be the union of all points $x'$ such that $x$ is a $K$-line good point. 
	Furthermore, for each~$x\in \Reg{B}$ we  consider the maximal connected subgraph of $L_x$ containing $x$ formed by open edges and denote by $L(B)$ the union of all these subgraphs.

	Let now $r\ge 1$ and $z$ be such that $\|z\|>r$. As in Section~\ref{sec:hitting} 
	we write $\cez{\ball{0}{r}^c}$ for the ``extended'' cluster of $z$, that is 
	\[
	\cez{\ball{0}{r}^c} = \cC(z;\ball{0}{r}^c) \cup L(\cC(z;\ball{0}{r}^c)), 
	\]
	and let $X_r^{K-\line}$ be the number of $K$-line good points of $\cC(z;\ball{0}{r}^c)$. We also write for simplicity 
	$$\mathcal R' = \text{Reg}'(\cC(z;\ball{0}{r}^c)).$$

	Finally for $L\ge 10K$ we define $\mathcal S_L = B(0,r-L/2) \setminus B(0,r-L)$, and
	\[
	Z_{r,L} = \sum_{y\in \mathcal S_L} \1\big(y\stackrel{\text{off } \cez{B(0,r)^c}}{\longleftrightarrow }
	\mathcal R'\cap B(y,L)\big).
	\]

	The fact that in this definition we consider only connections between points $y\in \mathcal S_L$ to other points in the set $\mathcal R'$ at distance at most $L$ from each other, ensures that the constant $c$ in the next lemma is independent of $L$, see also the explanation just after~\eqref{eq:goalforw2}. This is important for the induction argument in the proof of Theorem~\ref{thm.pcap.ball} at the beginning of Section~\ref{sec:pcapball}.

	\begin{lemma}\label{lem:firstmomentofy}
		There exists $c>0$, such that for all $K$ sufficiently large, all $L\geq 10K$, $M\ge 1$, and $r\ge 2L$, we have 
		\[
		\E{Z_{r,L}\cdot \1(X_r^{K-\line}=M)} \ge c L^2 \cdot M\cdot 
		\pr{X_r^{K-\line}=M}.
		\]	
	\end{lemma}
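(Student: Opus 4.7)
This lemma is the exact analog, with the inside and outside of $B(0,r)$ exchanged, of Lemma~\ref{lem:firstmomentofy1} from Section~\ref{sec:newproof}, and I would mirror that proof step by step. Decomposing over realisations $H$ of the extended cluster $\cez{B(0,r)^c}$ and writing $B = H\cap B(0,r)^c$, the events $\{\cez{B(0,r)^c}=H\}$ and $\{y\stackrel{\text{off } H}{\longleftrightarrow}\text{Reg}'(B)\cap B(y,L)\}$ depend on disjoint collections of edges (the latter depends only on edges inside $B(0,r)$ that are not part of any line segment $L_x$), so by independence
\begin{align*}
\E{Z_{r,L}\cdot \1(X_r^{K-\line}=M)} = \sum_{H:\, |\text{Reg}'(B)|=M} \pr{\cez{B(0,r)^c}=H}\sum_{y\in \mathcal S_L}\pr{y\stackrel{\text{off } H}{\longleftrightarrow}\text{Reg}'(B)\cap B(y,L)}.
\end{align*}

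The core estimate is, for any such $H$ and any $y\in \mathcal S_L$,
\begin{equation*}
\pr{y\stackrel{\text{off } H}{\longleftrightarrow}\text{Reg}'(B)\cap B(y,L)} \gtrsim L^{2-d}\cdot |\text{Reg}'(B)\cap B(y,L)|.
\end{equation*}
I would follow~\eqref{eq:firstballhit.onearm}--\eqref{eq:goalforw2.onearm} verbatim. Setting $H^* = H\setminus \bigcup_{x'\in \text{Reg}'(B)}L_x$, one reduces ``off $H$'' to ``off $H^*$'' at the cost of a constant $c(K)=p_c^{d(2K+1)^d}$ by requiring the cluster of $y$ to first reach some $\partial B(x',K)$ off $H^*$ and then deterministically opening all edges of $B(x',K)$. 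Splitting
\[
\pr{y\stackrel{\text{off } H^*}{\longleftrightarrow}\text{Reg}'(B)\cap B(y,L)} = \pr{y\longleftrightarrow \text{Reg}'(B)\cap B(y,L)} - \pr{y\stackrel{\text{via } H^*}{\longleftrightarrow}\text{Reg}'(B)\cap B(y,L)},
\]
the unrestricted probability is lower bounded using Lemma~\ref{lem:hitdiameterdistance}: every $x'\in \text{Reg}'(B)\subseteq \partial B(0,r-K)$ lies at distance between $L/2-K$ and $L$ from $y\in \mathcal S_L$, so after translating by $-y$ the hypothesis holds with a universal constant, yielding a lower bound of order $L^{2-d}\cdot C_{d-4}(\text{Reg}'(B)\cap B(y,L))$. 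The points of $\text{Reg}'(B)$ are $4K$-separated on $\partial B(0,r-K)$ and inherit the surface density bound $s^2(\log s)^7$ from the regularity of the associated pioneers, so Claim~\ref{cl:lowerboundoncapacity} promotes this to $\gtrsim L^{2-d}\cdot |\text{Reg}'(B)\cap B(y,L)|$. For the via-$H^*$ term, a BK plus union bound in dyadic shells around each $x'$, with $|H^*\cap B(x',2^t)|\lesssim (2^t)^4(\log 2^t)^7 + K(2^t)^2(\log 2^t)^7$ (interior density plus the contribution of partial line segments of neighbouring regular points), produces a factor $K^{-1/2}$ in front of the same quantity exactly as in~\eqref{eq:hitviatildbsmall.onearm}; choosing $K$ large enough absorbs it into half of the previous bound.

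Summing over $y\in \mathcal S_L$ and swapping sums yields
\[
\sum_{y\in \mathcal S_L}\pr{y\stackrel{\text{off } H}{\longleftrightarrow}\text{Reg}'(B)\cap B(y,L)} \gtrsim L^{2-d}\sum_{x'\in \text{Reg}'(B)}|B(x',L)\cap \mathcal S_L| \gtrsim L^{2-d}\cdot M\cdot L^d = L^2\cdot M,
\]
where the uniform lower bound $|B(x',L)\cap \mathcal S_L|\gtrsim L^d$ holds for every $x'\in \partial B(0,r-K)$ (even at corners of the cube) because $L\ge 10K$ ensures that a box of side of order $L$ translated inward from $x'$ along its extreme coordinate fits inside $\mathcal S_L$. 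Summing over admissible $H$ closes the argument. The principal technical obstacle, as in Lemma~\ref{lem:firstmomentofy1}, is certifying that the via-$H^*$ contribution is strictly lower order than the capacity-based main term: this rests both on the surface density bound $s^2(\log s)^7$ (to control the partial line segments of other regular points crossing the dyadic shells around $x'$) and on the $4K$-separation of $\text{Reg}'(B)$ (so that Claim~\ref{cl:lowerboundoncapacity} applies), both of which are delivered by the definition of $K$-regular point.
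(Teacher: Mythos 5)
Your proposal mirrors the paper's approach, which itself defers to the proof of Lemma~\ref{lem:firstmomentofy1} and only spells out the required changes, and your reasoning is correct: the geometry (distances from $y\in\mathcal S_L$ to $\text{Reg}'(B)\cap B(y,L)$ all of order $L$, enabling Lemma~\ref{lem:hitdiameterdistance}), the use of the $s^2(\log s)^7$ surface-density bound (as opposed to $s^3$ from Remark~\ref{rem:scubed}, which is only needed in Section~\ref{sec:newproof}) feeding into Claim~\ref{cl:lowerboundoncapacity} under $d>6$, and the observation that $|B(x',L)\cap\mathcal S_L|\gtrsim L^d$ uniformly over $x'\in\partial B(0,r-K)$, including corners, all check out.

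There is one adjustment the paper makes that your write-up omits. The extended cluster $\cez{B(0,r)^c}$ now lives in the unbounded region $B(0,r)^c$, and the paper states that for this reason one cannot directly decompose the expectation as a sum over all possible realisations $H$ as you do in your first display. Instead the paper inserts the cutoff $\1(|\cez{B(0,r)^c}|\le C)$ before summing over $H$ — restricting the decomposition to finitely-determined local events — proves the bound for each fixed $C$, and then passes to the limit $C\to\infty$ by monotone convergence. Your direct decomposition is arguably still defensible (at $p_c$ the cluster is almost surely finite and everything is non-negative, so Tonelli covers the interchange), but the paper explicitly flags this as the point requiring modification, and you should either include the truncation step or at least justify the unrestricted sum by invoking a.s.\ finiteness of the restricted cluster.
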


	\begin{lemma}\label{lem:secondmomentofy}
		There exists $C>0$, such that for all $L\ge 10K$, $M\ge 1$, and $r\ge 2L$, we have 
		\begin{align*}
			\E{Z_{r,L}^2\cdot \1(X_r^{K-\line}=M)}\leq C\cdot  M^2\cdot L^4\cdot \pr{X_r^{K-\line}=M}.
		\end{align*}
	\end{lemma}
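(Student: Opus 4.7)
The plan is to follow essentially the same strategy as in the proof of Lemma~\ref{lem:secondmomentofy1} from Section~\ref{sec:newproof}, with the only structural difference being that here the cluster $\cez{\ball{0}{r}^c}$ lives \emph{outside} the box $\ball{0}{r}$ and connections for $Z_{r,L}$ are sought inside the annulus $\mathcal{S}_L \subset \ball{0}{r}$. This change of geometry is immaterial to the second moment estimate because the key quantitative input is that each pair $(x,y) \in \mathcal{R}' \times \mathcal{S}_L$ satisfies $\|x-y\| \asymp L$ when $y \in B(x,L)$, which is exactly the situation here too.

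First I would condition on the extended cluster $\cez{\ball{0}{r}^c}=H$ with $|\mathrm{Reg}'(B)|=M$, where $B=H\cap \ball{0}{r}^c$. Expanding the square and using that the event $\{\cez{\ball{0}{r}^c}=H\}$ depends on edges disjoint from those used in the ``off $H$'' connections, one obtains as in~\eqref{eq:secondmomentofy}
\begin{align*}
\E{Z_{r,L}^2\cdot \1(\cez{\ball{0}{r}^c}=H)}
&\le \pr{\cez{\ball{0}{r}^c}=H}\cdot \!\!\sum_{x_1,x_2\in\mathrm{Reg}'(B)}\sum_{\substack{y_1\in\mathcal{S}_L\cap B(x_1,L)\\ y_2\in\mathcal{S}_L\cap B(x_2,L)}}\!\!\pr{y_1\longleftrightarrow x_1,\ y_2\longleftrightarrow x_2},
\end{align*}
after dropping the ``off $H$'' restriction. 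The goal is then to show that the double sum is $O(M^2 L^4)$.

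Next I would split the double sum into four cases according to whether $x_1=x_2$ or not and whether $y_1=y_2$ or not. The three ``equality'' cases ($x_1=x_2,y_1=y_2$; $x_1=x_2,y_1\ne y_2$; $x_1\ne x_2,y_1=y_2$) are handled exactly as in Lemma~\ref{lem:secondmomentofy1} using~\eqref{twopoint}, the BK inequality, and the uniform estimate $\sum_{y\in B(x,L)}\tau(y-u)\lesssim L^2$; each contributes at most a constant multiple of $M^2 L^4$ (for the pure diagonal term one gets $ML^2$, and for the partial-diagonal terms one gets $M L^6$ and $M^2 L^2$, which are both absorbed). Note: unlike in Lemma~\ref{lem:secondmomentofy1}, the step $ML^6 \le M^2 L^4$ does not require the assumption $M\ge cL^2$, because here $M$ can be small, but I can always absorb $ML^6$ into the main term by noting that for the trivial cases $M\le L^2$ the statement becomes vacuous or can be handled directly; alternatively, the term $ML^6$ is already the bound we want if we replace $M^2L^4$ by $ML^6 + M^2L^4$, which is $\lesssim M^2L^4$ whenever $M\ge L^2$ and otherwise already controlled by a cruder bound.

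The main case, and the only nontrivial one, is $x_1\ne x_2$ and $y_1\ne y_2$. Here I would decompose the event using the observation~\eqref{disjointpaths}: either two disjoint paths exist, or there are ``meeting points'' $u,v$ giving a skeleton of five disjoint connections. Applying BK~\eqref{BKineq} then yields the bound
\begin{align*}
\pr{y_1\longleftrightarrow x_1, y_2\longleftrightarrow x_2}
&\le \tau(y_1-x_1)\tau(y_2-x_2) \\
&\quad + \sum_{u,v}\tau(x_1-u)\tau(u-v)\tau(v-y_1)\bigl[\tau(x_2-u)\tau(v-y_2)+\tau(x_2-v)\tau(u-y_2)\bigr].
\end{align*}
Summing over $y_1,y_2$ inside the respective balls of radius $L$, using $\sum_{y\in B(x,L)}\tau(y-u)\lesssim L^2$ twice, followed by Claim~\ref{claim.G} to replace a two-term $\tau$-convolution by the function $G(x,y)=(1+\|x-y\|)^{4-d}$, gives
\[
\sum_{y_1,y_2}\pr{y_1\longleftrightarrow x_1,y_2\longleftrightarrow x_2}\lesssim L^4 + L^2\sum_u \tau(x_1-u)G(u,x_2^*)\tau(x_2-u) + L^4\sum_u\tau(x_1-u)G(u,x_2),
\]
where the middle term comes from a more careful analysis as in Lemma~\ref{lem:secondmomentofy1}. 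Using~\eqref{convol.gG} (which crucially needs $d\ge 7$) gives $\sum_u\tau(x_1-u)G(u,x_2)\lesssim 1$, and a two-region split on $u$ (near $y\in \mathcal{S}_L\cap B(x_2,L)$ vs.\ far) bounds the middle contribution by $L^2$ as well. Summing over the pairs $(x_1,x_2)\in \mathrm{Reg}'(B)^2$ then yields the desired $M^2 L^4$ bound. Finally, summing over $H$ with $|\mathrm{Reg}'(B)|=M$ completes the proof.

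The main obstacle is the $(x_1\ne x_2,y_1\ne y_2)$ case, specifically verifying that the intermediate sum $\sum_u\tau(x_1-u)G(u,y)\tau(x_2-u)$ summed over $y\in \mathcal{S}_L\cap B(x_2,L)$ is $O(L^2)$ uniformly in $x_1,x_2$. This is the exact same estimate as in Lemma~\ref{lem:secondmomentofy1} and relies on $d\ge 7$ together with the convolution identity of Claim~\ref{claim.G} and~\eqref{convol.gG}.
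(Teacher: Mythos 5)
Your overall strategy is the same as the paper's: the paper explicitly says the proof "follows similarly to the proof of Lemma~\ref{lem:secondmomentofy1}" and just explains the required changes. You correctly reproduce the case analysis, the BK/tree-graph decomposition~\eqref{disjointpaths}, the uniform bound $\sum_{y\in B(x,L)}\tau(y-u)\lesssim L^2$, and the use of Claim~\ref{claim.G} and~\eqref{convol.gG}. Two points, however, deserve attention.

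First, you condition on $\cez{\ball{0}{r}^c}=H$ and later "sum over $H$" without comment, but the paper explicitly flags a technical issue here: unlike $\ce{\ball{0}{r}}$, which lives in a bounded box, the extended cluster $\cez{\ball{0}{r}^c}$ lives in the infinite region $\ball{0}{r}^c$. The paper's fix is to insert the indicator $\1(|\cez{\ball{0}{r}^c}|\leq C)$ into the moments, carry out the argument for each fixed $C$ (where the sum over realisations $H$ is legitimately finite), and then let $C\to\infty$ by monotone convergence. Your sketch omits this truncation step entirely; it is the one genuine modification the paper emphasizes.

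Second, your treatment of the $M\ge 1$ versus $M\ge cL^2$ discrepancy is not convincing. You correctly notice that the partial-diagonal case $x_1=x_2$, $y_1\ne y_2$ produces a bound of order $ML^6$, which is not $\lesssim M^2L^4$ when $M<L^2$. But the purported resolutions do not work: the statement is not vacuous for $M\le L^2$, replacing $M^2L^4$ by $ML^6+M^2L^4$ changes the lemma, and no "cruder bound" is actually provided. In fact the paper's own proof inherits the restriction $M\ge cL^2$ from Lemma~\ref{lem:secondmomentofy1} (where the inequality $ML^6\lesssim M^2L^4$ "uses our assumption on $M$"), and this is never addressed for Lemma~\ref{lem:secondmomentofy} — but it is harmless, since in the proof of Proposition~\ref{thm:2} the lemma is only invoked with $M\ge\alpha(K)L^2$. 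You should either state the lemma with the hypothesis $M\ge cL^2$ (mirroring Lemma~\ref{lem:secondmomentofy1}) or explicitly carry the $ML^6$ term and absorb it at the point of application; the hand-waving in your note does not close the gap.
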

	
	The proofs of these two lemmas follow similarly to the proofs of  Lemmas~\ref{lem:firstmomentofy1} and~\ref{lem:secondmomentofy1}. We explain the required changes. First we need to replace $\ce{\ball{0}{r}}$ by $\cez{\ball{0}{r}^c}$. Because the $\sigma$-algebra generated by the extended cluster in $\ball{0}{r}^c$  is uncountably generated, we cannot sum over all possible realisations of $\cez{\ball{0}{r}^c}$ as we did in the previous section. To overcome this issue, we introduce the indicator $\1(|\cez{\ball{0}{r}^c}|\leq C)$ in both the first and the second moments above and then using monotone convergence as $C\to\infty$ we obtain the desired bounds.

	\begin{proposition}\label{thm:localgoodforball}
		For all $K$ sufficiently large, there exists a constant $\alpha(K)>0$, so that for all $  1\le M \le Kr^{d-1}$, and all $ r\leq \|z\|/2$, we have 
		\[
		\pr{X_r\ge  M, X_r^{K-\line}\leq \alpha(K) \cdot X_r}\leq \tau(z) \cdot r^{2d-2}\cdot \exp(-\alpha(K)\cdot (\log M)^4).
		\]
	\end{proposition}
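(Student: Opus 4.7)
The proof mirrors the strategy of Proposition~\ref{thm:linegoodpoints}, adapted to the outer setting where the cluster is rooted at $z$ rather than at the origin. As a first reduction, by the outer analog of Claim~\ref{claim:Klinegood} (with $X_r$, $X_r^{K-\reg}$, $X_r^{K-\line}$ now referring to the cluster of $z$ in $B(0,r)^c$), choosing $\alpha(K)=\varepsilon(K)/2$ it suffices to bound $\pr{X_r \ge M,\ X_r^{K-\irr} \ge X_r/2}$. Decomposing $X_r^{K-\irr} \le \sum_{s\ge K} X_r^{s-\loc-\bad}$ via the outer analog of Claim~\ref{cl:locgoodgood}, we are reduced to summing bounds of the form $\pr{X_r \ge M,\ X_r^{s-\loc-\bad} \ge X_r/s^2}$ over $s \geq K$.

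For small scales $s \le M^{1/(8d^3)}$, I would adapt the box-exploration procedure of Proposition~\ref{thm:linegoodpoints}: partition a large ball containing both $z$ and $B(0,r)$ into boxes of side $s^d$, and reveal the cluster of $z$ in $B(0,r)^c$ box by box, starting from the box containing $z$. By Claim~\ref{claim:Tsloc} each newly explored box is $s$-locally bad with conditional probability at most $\exp(-c(\log s)^4)$, and since the number of explored boxes is at most of order $X_r$, an Azuma--Hoeffding bound on the number of locally bad explored boxes yields $\pr{X_r \ge M,\ X_r^{s-\loc-\bad} \ge X_r/s^2} \lesssim \exp(-cM/s^{2d^2+4})$. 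The overall cost of the cluster of $z$ reaching $\partial B(0,r)^c$ is absorbed in the trivial union-bound factor $\pr{X_r \ge 1} \leq \sum_{x \in \partial B(0,r)^c}\tau(z-x) \lesssim \tau(z)\,r^{d-1}$. Summing over this range of $s$ gives a contribution bounded by $\tau(z)\,r^{d-1}\exp(-c\sqrt{M})$.

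For large scales $s > M^{1/(8d^3)}$, I would proceed directly by union bound:
\[
\pr{X_r^{s-\loc-\bad}\geq 1} \le \sum_{x\in \partial B(0,r)^c} \pr{z\stackrel{\text{off } B(0,r)}{\longleftrightarrow} x,\ x\text{ is $s$-locally bad}}.
\]
Since $\|z\|>2r$, any path from $z$ to $x$ off $B(0,r)$ must exit $B(x,s^d)$, so a BK-type decomposition combined with independence of the edge configurations inside and outside $B(x,s^d)$ gives
\[
\pr{z\longleftrightarrow x,\, x \text{ locally bad}} \lesssim \sum_{u\in\partial B(x,s^d)}\tau(z-u)\cdot \pr{u\longleftrightarrow x \text{ in } B(x,s^d)\cap B(0,r)^c,\, x\text{ locally bad}}.
\]
Using $\tau(z-u)\asymp \tau(z)$ uniformly in $u\in B(x,s^d)$ (since $\|z\|>2r$ and $s\le r$), Cauchy--Schwarz on the inner expectation, and the Aizenman--Newman second-moment bound $\E{|\mathcal{C}(x;B(x,s^d))|^2} \lesssim s^{6d}$, each summand is at most $\tau(z)\cdot s^{3d}\cdot\exp(-c(\log s)^4/2)$. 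Summing over the $\asymp r^{d-1}$ points of $\partial B(0,r)^c$ and absorbing the polynomial prefactor $s^{3d}$ into the superpolynomial exponential $\exp(-c(\log s)^4/2)$ produces a contribution of order $\tau(z)\,r^{2d-2}\exp(-c(\log M)^4)$, after a final sum over $s>M^{1/(8d^3)}$.

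The main obstacle is the large-scale step: the factor $\tau(z)$ reflecting the cost of reaching $x$ from $z$ must be extracted while preserving the superpolynomial decay $\exp(-c(\log s)^4)$ coming from local irregularity, even though the two events both depend on edges close to $x$. The BK-plus-Cauchy--Schwarz decoupling at scale $s^d$ achieves this at the expense of a mildly lossy prefactor $r^{2d-2}$ in place of the sharper $r^{d-1}$; any such polynomial loss is harmless here because the range of $M$ for which the bound is meaningful forces the exponential factor to dominate.
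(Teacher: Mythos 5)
The outline captures the right high-level structure (decompose $X_r^{K-\irr}$ over scales, exploration argument at small scales, union bound at large scales), but there is a genuine gap at the heart of the small-scale case. Having derived
$\pr{X_r\ge M,\ X_r^{s-\loc-\bad}\ge X_r/s^2}\lesssim\exp\bigl(-cM/s^{2d^2+4}\bigr)$
from the exploration, you then say the $\tau(z)r^{d-1}$ factor ``is absorbed in the trivial union-bound factor $\pr{X_r\ge 1}$.'' But these are two separate upper bounds on probabilities of (nearly) the same event, and there is no mechanism given for multiplying them: once you have used the exploration's ``probability budget'' to obtain the exponential decay, the factor $\pr{X_r\ge 1}$ has not appeared anywhere, and simply tacking it on is not justified. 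The paper's proof handles exactly this point by first conditioning on $\{X_R\ge 1\}$ with $R=r+4s^d$, and on the set $W$ of points of the cluster of $z$ on $\partial B(0,R)^c$. Conditionally on this, the percolation in $B(0,R)\cap B(0,r)^c$ is fresh, the bad events (which live in $B(x,s^d)\cap B(0,r)^c\subseteq B(0,R)$) are unbiased, and the exploration from $W$ produces the exponential estimate \emph{as a conditional probability}. Multiplying by $\pr{X_R\ge 1}\lesssim\tau(z)R^{d-1}$ then genuinely produces the product of factors. This conditioning-on-a-scale-$s^d$-shell is the key idea of the proposition, and your write-up is missing it.

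A secondary issue is in your large-scale step: you assert $\tau(z-u)\asymp\tau(z)$ uniformly over $u\in\partial B(x,s^d)$ ``since $\|z\|>2r$ and $s\le r$''. But $s\le r$ does not give $s^d\lesssim r$; for $s^d\gtrsim\|z\|$ (which occurs for $s$ on the large end of the summation range) the approximation $\tau(z-u)\asymp\tau(z)$ fails and the path from $z$ to $x$ need not even exit $B(x,s^d)$. The paper handles this by explicitly splitting into $R<\|z\|$ and $R\ge\|z\|$; in the latter case $s^d\gtrsim\|z\|$ forces $\log s\gtrsim\log\|z\|\gtrsim\log M$, so $\exp(-c(\log s)^4)$ alone absorbs the missing $\tau(z)\asymp\|z\|^{2-d}$ factor. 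You need an analogous case split. With these two points fixed, your approach would recover the paper's proof; your BK-plus-Cauchy--Schwarz decoupling for the large-scale union bound is more explicit than the paper's brief union-bound remark and would serve as a valid alternative there, at the cost of a larger (but still admissible) polynomial prefactor.
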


	\begin{proof}[\bf Proof]
		To prove the proposition, we first control the number of $K$-regular points and show a concentration estimate analogous to Proposition~\ref{thm:linegoodpoints} but with the extra $\tau(z)$ factor as in the statement above. As in Proposition~\ref{thm:linegoodpoints}, it is enough to control the number of locally regular points. 
		By definition we have 
		\[
		X^{K-\rm{irr}}_r = \sum_{s= K}^{2r} X_r^{s-\rm{loc-bad}},
		\]
		(indeed, note that one can stop the sum at $s=2r$, since if a point is $s$-locally bad, for some $s\ge 2r$, then by definition it is also $2r$-locally bad) and hence it suffices to bound 
		\[
		\pr{X_r\geq M \ \text{ and } \ X_r^{s-\rm{loc-bad}}>X_r/s^2}. 
		\]
		Let $R=r+ 4s^{d}$. We start with the case $R< \|z\|$.  We then have 
		\begin{align*}
			\pr{X_r\geq M \ \text{ and } \ X_r^{s-\rm{loc-bad}}>\frac{X_r}{s^2}} =\prcond{X_r\geq M \ \text{ and } \ X_r^{s-\rm{loc-bad}}>\frac{X_r}{s^2}}{X_R\geq 1}{}\pr{X_R\geq 1}.
		\end{align*}
		First we notice that by a union bound $\pr{X_R\geq 1}\lesssim \tau(z) R^{d-1}$. On the event $\{X_R\geq 1\}$ we let $W$ be the set of vertices on $\partial B(0,R)^c$ that connect to $z$ via open paths of edges lying entirely in $B(0,R)^c$. We now repeat the exploration procedure as in the proof of Proposition~\ref{thm:linegoodpoints} but with $0$ replaced with $W$ and exploring all the boxes of the cluster of $z$ intersecting $\partial B(0,r)$. As before, we consider partitions into boxes of side length $4s^{d}$. In particular, as for~\eqref{LDsloc} we get for some constant $\kappa >0$, 
		\[
		\prcond{X_r\geq M \ \text{ and } \ X_r^{s-\loc-\bad}\geq \frac{X_r}{s^2}}{X_R\geq 1}{}\lesssim \exp\left(- \kappa\cdot  \frac{M}{s^{2d^2+4}}\right).
		\]
		Taking the sum over all $s \in [K, M^{1/(4d^2+8)}]$ we get 
		\[
		\sum_{s=K}^{M^{1/(4d^2+8)}} 	\prcond{X_r\geq M \ \text{ and } \ X_r^{s-\loc-\bad}\geq \frac{X_r}{s^2}}{X_R\geq 1}{}\lesssim \exp(-\kappa\cdot \sqrt{M}),
		\]
		for some possibly smaller constant $\kappa>0$.
		For $s\ge M^{1/(4d^2+8)}$ (and still satisfying $R< \|z\|$) we simply use a union bound and get, using Claim~\ref{claim:Tsloc}, 
		\begin{align*}
			\prcond{X_r\geq M \text{ and } X^{s-\rm{loc-bad}}_r> X_r/s^2}{X_R\geq 1}{} \leq \pr{X^{s-\rm{loc-bad}}_r\ge 1} \\ \lesssim r^{d-1}\cdot \exp(-c(\log s)^4).
		\end{align*}
		Likewise, if $R>\|z\|$, then  note that it implies $s^d>\|z\|/8$ (recall that $r\le \|z\|/2$ by hypothesis) and hence by a union bound without conditioning on $X_R\geq 1$, we obtain
		\begin{align*}
			\pr{X_r\geq M \ \text{ and } \ X_r^{s-\rm{loc-bad}}>X_r/s^2} &  \lesssim r^{d-1} \cdot \exp(-c(\log s)^4)\\
			& \lesssim \tau(z) \cdot r^{d-1} \cdot \exp(-c'(\log M)^4). 
		\end{align*}
		Putting all these bounds together shows 
		\[
		\pr{X_r\geq M, X_r^{K-\reg}\leq \frac{X_r}{2}} \lesssim \tau(z) \cdot r^{2d-2}\cdot \exp(-c_1 (\log M)^4),
		\]
		for a positive constant $c_1$. To control the number of $K$-line-good points, on the event that there are enough $K$-regular points, we use that each $K$-regular point is independently a $K$-line-good point with probability at least $p_c^{dK}$. We thus get using binomial concentration again
		\begin{align*}
			\pr{X_r\geq  M, X_r^{K-\reg}\geq \frac{X_r}{2}, X_r^{K-\line} \leq X_r^{K-\reg}\cdot p_c^{dK}/K^{d+1}} \leq \tau(z) \cdot r^{d-1}\cdot \exp(-\alpha(K)\cdot M),
		\end{align*}
		with $\alpha(K)$ a positive constant depending on $K$. Note that here we used that $X_r\geq M$ implies in particular that $\cC(z)\cap \ball{0}{r}\neq \emptyset$ and this has probability bounded from above by $\tau(z)\cdot r^{d-1}$ by a union bound.  Putting everything together completes the proof.
	\end{proof}

	\begin{proof}[\bf Proof of Proposition~\ref{thm:2}]
		Let $K$ be sufficiently large, as in Lemma~\ref{lem:firstmomentofy} and Proposition~\ref{thm:localgoodforball}. Fix also $L\ge 10K$ and $r\ge 2L$. Then by definition it is immediate to check that $A_{r,L}\geq Z_{r,L}$. Let~$\rho=\frac{c\cdot \alpha(K)}{2}$, with $c$ as in Lemma~\ref{lem:firstmomentofy} and $\alpha(K)$ as in Proposition~\ref{thm:localgoodforball}. We then obtain 
		\begin{align*}
			&	\pr{X_{r}\geq L^2 \text{ and } A_{r,L}\leq \rho L^4}\\ 
			&	 \leq \pr{X_{r}\geq L^2, \ X_{r}^{K-\line}\leq \alpha(K) \cdot L^2} + \pr{X_{r}^{K-\line}\geq \alpha(K)\cdot L^2, A_{r,L}\leq \rho \cdot L^4} \\
			&	\lesssim \tau(z) \cdot r^{2d-2}\cdot \exp(-\alpha(K)\cdot (\log L)^4) +  \pr{X_{r}^{K-\line}\geq \alpha(K)\cdot L^2, \ Z_{r,L}\leq \rho\cdot L^4},
		\end{align*}
		where for the last inequality we used Proposition~\ref{thm:localgoodforball}. By the Payley-Zygmund inequality and using Lemmas~\ref{lem:firstmomentofy} and~\ref{lem:secondmomentofy}, for any $M\ge \alpha(K)\cdot L^2$, 
		\begin{align*}
			\mathbb P\Big(Z_{r,L} \ge \rho\cdot  L^4\ \Big|\ X_{r}^{K-\line}=M\Big) 
			& \ge  \prcond{Z_{r,L}\geq \frac{1}{2}\econd{Z_{r,L}}{X_{r}^{K-\line}=M}}{X_{r}^{K-\line}=M}{} \\ 
			& \geq  \frac{1}{4}\cdot \frac{\left(\econd{Z_{r,L}}{X_{r}^{K-\line}=M}\right)^2}{\econd{Z_{r,L}^2}{X_{r}^{K-\line}=M}}\geq c_2=c_2(K), 
		\end{align*}
		for a positive constant $c_2$ depending on $K$. 
		Hence, summing over $M\ge \alpha(K)\cdot L^2$, this gives 
		\begin{align*}
			\pr{X_{r}^{K-\line}\geq \alpha(K)\cdot L^2, Z\leq \rho \cdot L^4}&\leq (1-c_2)\cdot  \pr{X_{r}^{K-\line}\geq \alpha(K)\cdot L^2}
			\\ &\leq (1-c_2)\cdot \pr{z\longleftrightarrow \ball{0}{r}}.
		\end{align*}
		Using the obvious lower bound 
		\[
		\pr{z\longleftrightarrow \ball{0}{r}} \ge \tau(z),
		\]
		and assuming now that $L\geq c'r$, for some constant $c'>0$, it follows that by taking $r$ sufficiently large 
		\[
		\tau(z)\cdot r^{d-1} \cdot \exp(-c_1(\log L)^4)\leq \frac{c_2}{2} \cdot \pr{z\longleftrightarrow \ball{0}{r}},
		\]
		and hence 
		\[
		\pr{X_{r}\geq L^2 \text{ and } A_{r,L}\leq cL^4} \leq \left(1-\frac{c_2}{2}\right)\cdot  \pr{z\longleftrightarrow \ball{0}{r}},
		\]
		which concludes the proof.
	\end{proof}

	
	\section{Connecting two distant sets}\label{sec:connectingdistant}

	In this section we give the proofs of Theorems~\ref{thm.twosets} and~\ref{thm.onearm}. We also prove an additional result, Lemma~\ref{lem:2sets} below, providing a uniform lower bound for the probability to connect two sets in terms of the product of their $(d-4)$-capacities.

	\begin{proof}[\bf Proof of Theorems~\ref{thm.twosets} and~\ref{thm.onearm}]

	Both proofs proceed in a similar way. Concerning Theorem~\ref{thm.twosets}, on the event when $A$ and $z+B$ are connected, consider the last pivotal oriented edge $\vec e=(x,y)$ defined by the following conditions: first $x$ is connected to $A$ by two disjoint open paths, secondly $\{x,y\}$ is open but closing it disconnects $A$ from $B$, and finally $y$ is connected to $B$. Denote by $\mathcal P_{z,A,B}(\vec e)$ the corresponding event. It may happen that such an edge does not exist, but in this case one can find two disjoint open paths connecting $A$ to $z+B$, which has probability of order at most $(|A|\cdot|B| \cdot \tau(z))^2$, using BK inequality~\eqref{BKineq}, and is thus an  event with negligible probability. Now for each fixed oriented edge $\vec e=(x,y)$, one may localize the event $\mathcal P_{z,A,B}(\vec e)$, similarly as in the proof of Theorem~\ref{thm.defpcap}). Then using the result of~\cite{CCHS25} stating that the probability of any local event, conditionally on a point $y$ being connected to any arbitrary set going to infinity, converges in law to the same event under the law of the IIC rooted at $y$, we deduce that 
	$$\lim_{\|z\|\to \infty} \frac{\mathbb P(\mathcal P_{z,A,B}(\vec e))}{\mathbb P(y\longleftrightarrow z+B)} = \mathbb P(\mathcal P_{\infty,A}(\vec e)), $$ 
	where $\mathcal P_{\infty,A}(\vec e)$ is the event that in $\mathcal C_\infty(y)$ one can find two disjoint open paths from $x$ to $A$, and when one closes the edge $e$, $y$ gets disconnected from $A$ in $\mathcal C_\infty(y)$.  
	On the other hand we know from Theorem~\ref{thm.defpcap} that for any fixed $y$, 
	$$\lim_{\|z\|\to \infty} \frac{\mathbb P(y\longleftrightarrow z+B)}{\tau(z)} = \textrm{pCap}(B), $$ 
	and from~\eqref{expr.pivotal.pcap} that
	$$\sum_{\vec e} \mathbb P(\mathcal P_{\infty, A}(\vec e)) = \textrm{pCap}(A).  $$
	Then Theorem~\ref{thm.twosets} follows.
	The proof of Theorem~\ref{thm.onearm} is entirely similar. More precisely, one may define $\mathcal P_{r,A}(\vec e)$, as the event that $x$ is connected to $A$ using two disjoint open paths, $\{x,y\}$ is open, closing $e$ disconnects $A$ from $\partial B(0,r)$, and $y$ is connected to $\partial B(0,r)$. Then exactly the same argument as above proves the desired result, just using additionally the one arm estimate~\eqref{onearm}. 
		\end{proof}

	\begin{lemma}\label{lem:2sets}
		For every constant $c_1>0$, there exists a constant $c>0$ so that if $A, B\subseteq \Z^d$ are two finite sets containing $0$, then for all $z$ with $d(z,A), d(z,B)\geq c_1 \max_{a\in A,b\in B}\|a-b\|$ we have 
		\[
		\pr{A\longleftrightarrow B+z} \geq c \cdot \tau(z) \cdot \cpc{d-4}{A}\cpc{d-4}{B}.
		\]
	\end{lemma}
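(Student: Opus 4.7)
The plan is to mimic the second moment argument from Lemma~\ref{lem:hitdiameterdistance}, but now with probability measures supported on both $A$ and $B$. Let $\mu$ and $\nu$ be probability measures attaining the infima in the variational formula~\eqref{var.C4} for $\cpc{d-4}{A}$ and $\cpc{d-4}{B}$ respectively, and define
\[
Z \;=\; \sum_{a\in A,\,b\in B} \frac{\mu(a)\,\nu(b)}{\tau(a-b-z)}\;\1(a\longleftrightarrow b+z).
\]
By construction $\E{Z}=1$, and by Cauchy--Schwarz $\pr{A\longleftrightarrow B+z}\ge \pr{Z>0}\ge 1/\E{Z^2}$, so the whole proof reduces to an upper bound on $\E{Z^2}$.

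The core ingredient is a tree-graph inequality for the four-point event $\{a_1\leftrightarrow b_1+z,\,a_2\leftrightarrow b_2+z\}$. Either the two connections can be realized disjointly (handled by BK), or they share a vertex, in which case one may extract two ``meeting'' vertices $u,v$ (the first/last intersection points of the two open paths) and decompose the event into five pairwise-disjoint connections. This gives
\[
\pr{a_1\leftrightarrow b_1+z,\,a_2\leftrightarrow b_2+z} \lesssim \tau(a_1-b_1-z)\tau(a_2-b_2-z) + \sum_{u,v}\tau(a_1-u)\tau(a_2-u)\tau(u-v)\tau(v-b_1-z)\tau(v-b_2-z),
\]
up to symmetric terms coming from the other possible meeting topologies. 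Under the hypothesis $d(z,A),d(z,B)\ge c_1 D$ with $D=\max_{a,b}\|a-b\|$, each factor $\tau(a_i-b_j-z)$ is of order $\tau(z)$ (with constants depending on $c_1$, using that $\|a-b-z\|\asymp \|z\|$). Using Claim~\ref{claim.G} to see that $\sum_u\tau(a_1-u)\tau(a_2-u)\asymp G(a_1,a_2)$ with $G(x,y)=(1+\|x-y\|)^{4-d}$, and noting that in the five-fold sum the dominant contribution arises when $u$ is near $A$ and $v$ is near $B+z$, in which case $\tau(u-v)\asymp \tau(z)$, one expects
\[
\sum_{u,v}\tau(a_1-u)\tau(a_2-u)\tau(u-v)\tau(v-b_1-z)\tau(v-b_2-z) \;\lesssim\; \tau(z)\,G(a_1,a_2)\,G(b_1,b_2).
\]

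Plugging these bounds into $\E{Z^2}$ and using $\sum_{x,y}\mu(x)\mu(y)G(x,y)=1/\cpc{d-4}{A}$ together with the analogous identity for $\nu$ gives
\[
\E{Z^2} \;\lesssim\; 1 + \frac{1}{\tau(z)\,\cpc{d-4}{A}\,\cpc{d-4}{B}},
\]
so that $\pr{Z>0}$ is at least a constant multiple of $\tau(z)\cpc{d-4}{A}\cpc{d-4}{B}/(1+\tau(z)\cpc{d-4}{A}\cpc{d-4}{B})$, which yields the desired lower bound. The main obstacle is the book-keeping of the tree-graph inequality: one must enumerate the different meeting topologies of the two open paths, and for each one verify that the ``bad'' regimes of the five-fold convolution (where $u$ is far from $A$ or $v$ is far from $B+z$) do not dominate and still lead to a bound proportional to $\tau(z)\,G(a_1,a_2)\,G(b_1,b_2)$, thereby recovering the product structure $\cpc{d-4}{A}\cpc{d-4}{B}$ after optimizing over $\mu$ and $\nu$.
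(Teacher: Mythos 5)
Your proposal is essentially the paper's proof. The paper also sets up a second-moment argument with $Z=\sum_{a\in A,b\in B}\mu(a)\nu(b)\1(a\leftrightarrow b+z)$ (you normalize by $\tau(a-b-z)$, which is cosmetic), bounds $\E{Z^2}$ via the same tree-graph/BK decomposition of the four-point event with two meeting vertices, identifies the dominant contribution as $\tau(z)\cE(\mu,\mu)\cE(\nu,\nu)$ after summing against $\mu\otimes\nu$, and then optimizes over $\mu$ and $\nu$ to get the product $\cpc{d-4}{A}\cpc{d-4}{B}$. The paper's case analysis of the five-fold sum does produce subdominant terms of a different form than $\tau(z)G(a_1,a_2)G(b_1,b_2)$ — namely $\tau(z)^2 G(a_1,a_2)$ and $\tau(z)^2 G(b_1,b_2)$ — but as you anticipate, these are dominated by the main term once one uses $\cE(\mu,\mu),\cE(\nu,\nu)\gtrsim 1$ (or, pointwise, $\tau(z)\lesssim G(b_1,b_2)$ under the separation hypothesis), so your plan is sound.
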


	\begin{remark}
		\rm{
		Suppose $A$ and $B$ are two sets satisfying the assumptions of Claim~\ref{cl:lowerboundoncapacity}. Then their $(d-4)$-capacities will be of order their cardinality, and hence for such sets the statement of the lemma above is sharp up to constant factors. In~\cite[first item of Lemma~3.2]{CCHS25} a similar lower bound is obtained for sets that satisfy a regularity property as defined in~\cite{KN11} (see also Section~\ref{sec.regpoints} for our simplified definition). Imposing the extra condition on the number of points in balls on the boundary of the cube, the proof of the lower bound on the connection probability between two distant sets becomes considerably shorter. 
		}
	\end{remark}

	\begin{proof}[\bf Proof of Lemma~\ref{lem:2sets}]
		
		The proof of this lemma is very similar to the proof of Lemma~\ref{lem:hitdiameterdistance} and we just explain the appropriate changes. 
		Let $\mu$ and $\nu$ be two probability measures supported on $A$ and $B$ respectively. We define
		\[
		Z=\sum_{x\in A}\sum_{y\in B} \mu(x) \nu(y) \1(x\longleftrightarrow y+z).
		\]
		Then we clearly have 
		\begin{align}\label{eq:firstineqpayley}
			\pr{A\longleftrightarrow B+z} \geq \pr{Z>0} \geq \frac{(\E{Z})^2}{\E{Z^2}},
		\end{align}
		where the second inequality follows by the Payley-Zygmund inequality. 
		Standard computations as in Lemma~\ref{lem:hitdiameterdistance} and also in Lemma~\ref{lem:secondmomentofy1} give that 
		\[
		\E{Z} \asymp \tau(z)
		\]
		and for the second moment 
	\begin{align*}
		\E{Z^2} &\lesssim \tau(z) \sum_{x\in A,y\in B}\mu^2(x)\nu^2(y) + + \tau(z)\sum_{x,x'\in A} \mu(x) \mu(x') G(x,x')\\
		&+\tau(z)\sum_{x,x'\in A} \sum_{y,y'\in B} \mu(x) \mu(x') \nu(y)\nu(y') G(x,x') G(y,y')\\
		&+ (\tau(z))^2 \sum_{x,x'\in A} \mu(x) \mu(x')G(x,x') + (\tau(z))^2 \sum_{y,y'\in B} \nu(y) \nu(y')G(y,y').
		\end{align*}
		Writing $\cE(p,p)=\sum_{x,x'} p(x)p(x')G(x,x')$ for $p$ a probability measure and using that
		\[
		\cE(\mu,\mu)\gtrsim (\max_{a\in A}\|a\|)^{4-d} \quad \text{ and } \quad \cE(\nu,\nu)\gtrsim (\max_{b\in B}\|b\|)^{4-d},
		\]
		we obtain that the dominant term in the upper bound for $\E{Z^2}$ is $\tau(z) \cdot \cE(\mu,\mu)\cdot \cE(\nu,\nu)$, and hence plugging this into~\eqref{eq:firstineqpayley} and optimising over the choice of the probability measures $\mu$ and $\nu$ finishes the proof. 		
		\end{proof}

	\textbf{Acknowledgments:} We thank Romain Panis for sharing with us his proof of the improved bound on the two-point function restricted to the half space (first item of Lemma~\ref{lem.LD}).

\end{document}